\newcommand{\BibTeX}{{\scshape Bib}\kern-.08em\TeX}
\newcommand{\T}{\S\kern .15em\relax }
\newcommand{\AMS}{$\mathcal{A}$\kern-.1667em\lower.5ex\hbox
        {$\mathcal{M}$}\kern-.125em$\mathcal{S}$}
\DeclareMathOperator{\des}{des}
\DeclareMathOperator{\ndeg}{\widehat{\deg}_{\mathrm{n}}}
\DeclareMathOperator{\npdeg}{\widehat{\deg}_{\mathrm{n+}}}
\DeclareMathOperator{\hdeg}{\widehat{\deg}}
\DeclareMathOperator{\hmu}{\widehat{\mu}}
\DeclareMathOperator{\Hom}{Hom}
\DeclareMathOperator{\Image}{Im}
\DeclareMathOperator{\rang}{rg}
\DeclareMathOperator{\sq}{sq}
\DeclareMathOperator{\Spec}{Spec}
\DeclareMathOperator{\vol}{vol}
\DeclareMathOperator{\hvol}{\widehat{\vol}}
\DeclareMathOperator{\hnvol}{\widehat{\vol}_{\mathrm{n}}}
\def\sbullet{{\scriptscriptstyle\bullet}}
\renewcommand\det{\mathrm{d\acute{e}t}}
\def\huayi{}
\title[Majorations explicites des fonctions de Hilbert-Samuel]{Majorations explicites des fonctions de Hilbert-Samuel g\'eom\'etrique et arithm\'etique}
\date{\today}
\author{Huayi Chen}
\address{Universit\'e Grenoble Alpes, Institut Fourier (UMR 5582), F-38402 Grnoble, France}
\email{huayi.chen@ujf-grenoble.fr}
\urladdr{}
\begin{document}
\def\smfbyname{}

\begin{abstract}
En utilisant l'approche de $\mathbb R$-filtration en g\'eom\'etrie d'Arakelov, on \'etablit des majorations explicites des fonctions de Hilbert-Samuel g\'eom\'etrique et arithm\'etique pour les fibr\'es inversibles sur une vari\'et\'e projective et les fibr\'es inversibles hermitiens sur une vari\'et\'e projective arithm\'etique.
\end{abstract}

\begin{altabstract}
By using the $\mathbb R$-filtration approach of Arakelov geometry, one establishes explicit upper bounds for geometric and arithmetic Hilbert-Samuel function for line bundles on projective varieties and hermitian line bundles on arithmetic projective varieties. 
\end{altabstract}
\maketitle

\tableofcontents
\newpage
\section{Introduction}

Soient $X$ un sch\'ema projectif et int\`egre d\'efini sur un corps $k$, et $L$ un $\mathcal O_X$-module inversible. Rappelons que la fonction de Hilbert-Samuel de $L$ est d\'efinie comme l'application de $\mathbb N$ vers $\mathbb N$ qui envoie $n\in\mathbb N$ en $h^0(L^{\otimes n})$, le rang de l'espace des sections globales $H^0(X,L^{\otimes n})$ sur $k$. Le th\'eor\`eme de Riemann-Roch et le th\'eor\`eme d'annihilation de Serre montrent que, si le faisceau $L$ est ample, alors la relation suivante est v\'erifi\'ee~:
\begin{equation}\label{Equ:RRH}h^0(L^{\otimes n})=c_1(L)^d\frac{n^d}{d!}+o(n^d),\end{equation}
o\`u $d$ est la dimension de Krull du sch\'ema $X$, et $c_1(L)^d$ est le nombre d'auto-intersection de $L$. Le th\'eor\`eme d'approximation de Fujita \cite{Fujita94,Takagi07} montre que la relation \eqref{Equ:RRH} est v\'erifi\'ee en g\'en\'erale, quitte \`a remplacer le nombre d'intersection $c_1(L)^d$ par le volume de $L$, d\'efini comme 
\[\mathrm{vol}(L):=\limsup_{n\rightarrow+\infty}\frac{h^0(L^{\otimes n})}{n^d/d!}.\]
En d'autres termes, la limite sup\'erieure d\'efinissant la fonction volume est en fait une limite.

Il est cependant plus d\'elicat d'\'etudier l'estimation explicite de la fonction de Hilbert-Samuel qui sont valables pour tout entier $n\geqslant 1$. Dans la litt\'erature, diff\'erentes approches ont \'et\'e propos\'ees, souvent sous des conditions de positivit\'e forte pour le faisceau inversible $L$. On peut consulter par exemple les travaux de Nesterenko \cite{Nesterenko85}, Chardin \cite{Chardin89} et Sombra \cite{Sombra97}, o\`u on suppose que le faisceau $L$ est tr\`es ample et fixe un plongement de la vari\'et\'e polaris\'ee $(X,L)$ dans un espace projectif. Une majoration explicite de $h^0(L^{\otimes n})$ est ensuite obtenue par r\'ecurrence sur la dimension $d$ du sch\'ema $X$, en utilisant l'intersection avec des hyperplanes de l'espace projectif. Cette approche a une nature alg\'ebrique car le choix d'un plongement de la vari\'et\'e polaris\'ee correspond \`a un syst\`eme de g\'en\'erateurs homog\`enes de l'alg\`ebre gradu\'ee des sections globales des puissances tensorielles de $L$. On renvoie les lecteurs vers l'article de Bertrand dans \cite[chapitre 9]{LNM1752} pour une pr\'esentation d\'etaill\'ee de cette m\'ethode. L'approche de Koll\'ar et Matsusaka \cite{Kollar_Matsusaka} repose sur la comparaison entre la fonction $h^0$ et la caract\'eristique d'Euler-Poincar\'e (somme altern\'ee des rangs des espaces de cohomologie). Cette m\'ethode est relativement plus proche de l'esprit du th\'eor\`eme de Riemann-Roch. On suppose que le sch\'ema $X$ est r\'egulier et que le faisceau $L$ est semi-ample, i.e., une puissance tensorielle de $L$ est sans lieu de base. Un encadrement effectif mais assez compliqu\'e dans le cas de dimension sup\'erieure a \'et\'e obtenu pour $h^0(L^{\otimes n})$. L'encadrement ne d\'epend que du nombre d'auto-intersection $c_1(L)^d$ et le nombre d'intersection $c_1(L)^{d-1}c_1(\omega_X)$, o\`u $\omega_X$ est le fibr\'e canonique de $X$.

La fonction de Hilbert-Samuel peut \^etre g\'en\'eralis\'ee dans le cadre de la g\'eom\'etrie d'Arakelov, o\`u on consid\`ere une vari\'et\'e arithm\'etique projective $\mathscr X$ (i.e., un sch\'ema int\`egre, projectif et plat sur $\Spec\mathbb Z$) et un faisceau inversible $\mathscr L$ sur $\mathscr X$ muni d'une m\'etrique continue sur $\mathscr L(\mathbb C)$, invariante par la conjugaison complexe (ces donn\'ees sont appel\'ees un \emph{faisceau inversible hermitien} sur $\mathscr X$ et not\'ees comme $\overline{\mathscr L}$). Similairement \`a la situation g\'eom\'etrique, on d\'efinit la fonction de Hilbert-Samuel arithm\'etique de $\overline{\mathscr L}$ comme la fonction de $\mathbb N$ vers $[0,+\infty[$ qui envoie $n$ en $\widehat{h}^0(\overline{\mathscr L}{}^{\otimes n})$, le logarithme du nombre des sections globales de $\mathscr L^{\otimes n}$ dont la norme sup est major\'ee par $1$. La suite 
\[\frac{\widehat{h}^0(\overline{\mathscr L}{}^{\otimes n})}{n^{d+1}/(d+1)!},\quad n\geqslant 1\]
poss\`ede une limite\footnote{La convergence de cette suite a \'et\'e d\'emontr\'ee dans \cite{Chen08}. On peut aussi la d\'eduire du th\'eor\`eme de Fujita arithm\'etique \cite{Chen10,Yuan09}.} que l'on note comme $\widehat{\vol}(\overline{\mathscr L})$, et on l'appelle le \emph{volume arithm\'etique} de $\overline{\mathscr L}$. Ici $d$ d\'esigne la dimension relative de $\mathscr X\rightarrow\Spec\mathbb Z$ (et donc la dimension de $\mathscr X$ est $d+1$). On peut aussi exprimer ce r\'esultat comme une formule asymptotique 
\begin{equation}
\widehat{h}^0(\overline{\mathscr L}{}^{\otimes n})=\widehat{\vol}(\overline{\mathscr L})\frac{n^{d+1}}{(d+1)!}+o(n^{d+1}).
\end{equation}

Du point de vue birationnel, il est naturel de se demander si on peut obtenir une estimation de  $h^0(L^{\otimes n})$ en fonction des invariants birationnels de $L$ (comme par exemple le volume de $L$). La m\^eme question se pose aussi pour la fonction $\widehat{h}^0$ dans le cadre arithm\'etique. Cependant, les outils que l'on dispose, comme par exemple le th\'eor\`eme d'approximation de Fujita (g\'eom\'etrique ou arithm\'etique) sous forme actuelle, ne permet pas de traiter ce probl\`eme de fa\c{c}on effective. Il est encore peu probable que les m\'ethodes que l'on a r\'esum\'e plus haut se g\'en\'eralisent dans la situation birationnelle ou s'adaptent facilement dans le cadre de la g\'eom\'etrie arithm\'etique. Les r\'esultats arithm\'etiques sont rares dans la litt\'erature et portent notamment sur les cas o\`u la dimension de la vari\'et\'e arithm\'etique est petite. On peut consulter par exemple les r\'esultats de Blichfeld \cite{Blichfeld14}, Henk \cite{Henk02} et Gaudron \cite{Gaudron09} pour le cas d'une courbe arithm\'etique (ces r\'esultats sont bas\'es sur la g\'eom\'etrie des nombres) et le travail de Yuan et Zhang \cite{Yuan_Zhang13} pour le cas d'une surface arithm\'etique.

Le but principal de cet article est d'\'etablir une majoration effective pour la fonction de Hilbert-Samuel arithm\'etique en dimension quelconque. Dans le cas de surface arithm\'etique, ce r\'esultat est asymptotiquement plus pr\'ecis que la majoration obtenue dans \cite{Yuan_Zhang13} (cf. la remarque \ref{Rem:comparaisonYuanZhang} {\it infra.}).

\begin{theo}\label{Thm:majorationgeom}
Soit $\mathscr X$ une vari\'et\'e arithm\'etique projective d\'efinie sur l'anneau des entiers alg\'ebriques d'un corps de nombres $K$. Il existe une application $\widehat{\varepsilon}$ que l'on explicitera, de l'ensemble des faisceaux inversibles hermitiens  gros sur $\mathscr X$ vers $[0,+\infty[$, qui v\'erifie les conditions suivantes~:
\begin{enumerate}[(a)]
\item si $\overline{\mathscr L}$ et $\overline{\mathscr L}{}'$ sont deux faisceaux inversibles hermitiens gros tels que $\overline{\mathscr L}{}^\vee\otimes\overline{\mathscr L}{}'$ poss\`ede au moins une section effective non-nulle, alors $\widehat{\varepsilon}(\overline{\mathscr L})\leqslant\widehat{\varepsilon}(\overline{\mathscr L}{}')$;
\item pour tout faisceau inversible hermitien gros $\overline{\mathscr L}$ sur $\mathscr X$, on a \[\widehat{h}^0(\overline{\mathscr L})\leqslant\frac{\hvol(\overline{\mathscr L})}{(d+1)!}+\widehat{\varepsilon}(\overline{\mathscr L}),\]
o\`u $d+1$ est la dimension de Krull du sch\'ema $\mathscr X$;
\item pour tout faisceau inversible hermitien gros $\overline{\mathscr L}$, on a
\[\widehat{\varepsilon}(\overline{\mathscr L}{}^{\otimes n})= [K:\mathbb Q]\frac{\vol(\mathscr L_K)}{(d-1)!}n^d\ln(n)+O(n^d).\]
\end{enumerate} 
\end{theo}
{\huayi Le terme d'erreur $\widehat{\varepsilon}(\overline{\mathscr L})$ sera rendu explicite plus loin dans \S\,\ref{Sec:majroationarith} (cf. le th\'eor\`eme \ref{Thm:majorationh0}), il d\'epend de la pente maximale asymptotique de $\overline{\mathscr L}$ (qui est un invariant arithm\'etique birationnel), ainsi que des invariants birationnels de $\mathscr L_K$. Si on compare ce th\'eor\`eme aux r\'esultats dans la litt\'erature, il y a deux nouveaut\'es essentielles. Premi\`erement l'in\'egalit\'e dans (b) est valable pour le fibr\'e inversible hermitien $\overline{\mathscr L}$. On n'a pas besoin de passer \`a une puissance tensorielle d'exposant suffisamment grand, comme par exemple dans le th\'eor\`eme de Hilbert-Samuel arithm\'etique de \cite{Abbes-Bouche}. Deuxi\`emement, dans le th\'eor\`eme \ref{Thm:majorationgeom} on ne demande aucune condition de positivit\'e sur la m\'etrique du fibr\'e inversible hermitien $\overline{\mathscr L}$.}

La d\'emonstration du th\'eor\`eme repose sur la m\'ethode de $\mathbb R$-filtration introduite dans \cite{Chen10b,Chen10}. On consid\`ere chaque $\mathcal E_n=H^0(\mathscr X,\mathscr L^{\otimes n})$ ($n\in\mathbb N$) comme un r\'eseau dans un $\mathbb R$-espace vectoriel muni de la norme sup. Les minima successifs du r\'eseau correspondent \`a une $\mathbb R$-filtration $\mathcal F$ sur le $\mathbb Q$-espace vectoriel $E_n=H^0(\mathscr X_K,\mathscr L_K)$ telle que
\[\forall\,t\in\mathbb R,\quad \mathcal F^t(E_n)=\mathrm{Vect}\big\{s\in\mathcal E_n\,:\,\|s\|_{\sup}\leqslant\mathrm{e}^{-t}\big\}.\]
Rappelons que, pour tout $i\in\{1,\ldots,R_n\}$, o\`u $R_n=\rang_{\mathbb Q}(E_n)$, le $i^{\text{\`eme}}$ minimum logarithmique du r\'eseau $\mathcal E_n$ est d\'efini comme \[\lambda_i(\mathcal E_n):=\sup\{t\in\mathbb R\,:\,\rang_{\mathbb Q}(\mathcal F^t(E_n))\geqslant i\}.\]
En particulier, on peut interpr\'eter $\sum_{i}\max(\lambda_i(\mathcal E_1),0)$ sous forme d'une int\'egrale~: 
\[\sum_{i=1}^{R_1}\max(\lambda_i(\mathcal E_1),0)=\int_{0}^{+\infty}\rang(\mathcal F^tE_1)\,\mathrm{d}t.\]
Il s'av\`ere que cette somme est \'etroitement li\'ee \`a $\widehat{h}^0(\overline{\mathscr L}{}^{\otimes n})$, compte tenu du deuxi\`eme th\'eor\`eme de Minkowski et des r\'esultats en g\'eom\'etrie des nombres comme par exemple \cite{Blichfeld14}. En outre, pour tout $t\in\mathbb R$, la somme directe $E^t_\sbullet=\bigoplus_{n\geqslant 0}\mathcal F^{nt}(E_n)$ est un syst\`eme lin\'eaire gradu\'e de $\mathscr L_K$ (o\`u on consid\`ere $\mathscr X_K$ comme un $\mathbb Q$-sch\'ema projectif). Si on d\'esigne par $\vol(E_\sbullet^t)$ son volume, d\'efini comme
\[\vol(E_\sbullet^t)=\limsup_{n\rightarrow+\infty}\frac{\rang_{\mathbb Q}(E_n^t)}{n^d/d!},\]
on peut aussi exprimer le volume arithm\'etique $\hvol(\overline{\mathscr L})$ comme une int\'egrale
\[\hvol(\overline{\mathscr L})=(d+1)\int_0^{+\infty}\vol(E_\sbullet^t)\,\mathrm{d}t.\]
Ainsi on peut ramener le probl\`eme \`a la majoration de $\rang_{\mathbb Q}(\mathcal F^tE_1)=\rang_{\mathbb Q}(E_1^t)$ en fonction de $\vol(E_\sbullet^t)$. Cela peut \^etre consid\'er\'e comme une g\'en\'eralisation du probl\`eme de majoration explicite de la fonction de Hilbert-Samuel g\'eom\'etrique dans le cadre des syst\`emes lin\'eaires gradu\'es. Ce probl\`eme est r\'esolu par le th\'eor\`eme suivant, qui peut \^etre vu comme un avatar g\'eom\'etrique du th\'eor\`eme \ref{Thm:majorationgeom}.

\begin{theo}\label{Thm:HMgeometryique}
Soit $X$ un sch\'ema projectif et int\`egre d\'efini sur un corps $k$. On d\'esigne par $S$ l'ensemble des syst\`emes lin\'eaires gradu\'es de faisceaux inversibles sur $X$, qui contiennent des diviseurs amples. Il existe une application $\varepsilon:S\rightarrow [0,+\infty[$ que l'on explicitera, qui v\'erifie les conditions suivantes~:
\begin{enumerate}[(a)]
\item si $V_\sbullet$ et $W_\sbullet$ sont des syst\`emes lin\'eaires gradu\'es dans $S$, des faisceaux inversibles $L$ et $M$ respectivement et si le faisceau inversible $L^\vee\otimes M$ admet une section effective non-nulle  $s$ telle que la multiplication par des puissances de $s$ envoie $V_\sbullet$ dans $W_\sbullet$, alors on a $\varepsilon(V_\sbullet)\leqslant\varepsilon(W_\sbullet)$;
\item pour tout syst\`eme lin\'eaire gradu\'e $V_\sbullet$ dans $S$, on a 
\[\rang_k(V_1)\leqslant\frac{\vol(V_\sbullet)}{d!}+\varepsilon(V_\sbullet),\]
o\`u $d$ est la dimension de Krull de $X$;
\item pour tout syst\`eme lin\'eaire gradu\'e $V_\sbullet$ dans $S$ et tout entier $n\geqslant 1$, on a
\[\varepsilon(V_\sbullet^{(n)})\leqslant n^{d-1}\varepsilon(V_\sbullet),\]
o\`u $V_\sbullet^{(n)}=\bigoplus_{m\geqslant 0}V_{nm}$.
\end{enumerate}
\end{theo}

Compar\'e aux r\'esultats dans la litt\'erature, le th\'eor\`eme \ref{Thm:HMgeometryique} s'applique \`a des syst\`emes lin\'eaires gradu\'es tr\`es g\'en\'eraux, et on ne demande pas la condition d'amplitude (ou de semi-amplitude) pour les faisceaux inversibles en question. {\huayi Le terme d'erreur $\varepsilon(.)$ sera pr\'ecis\'e dans \S\,\ref{Sec:HS} et d\'epend du choix d'une cha\^ine de sous-extensions du corps des fonctions rationnelles $k(X)$ sur le corps de base $k$ dont les extensions successives sont de degr\'e de transcendance $1$.}

Pour un syst\`eme lin\'eaire gradu\'e $V_\sbullet$ fix\'e, si on se contente d'obtenir l'existence d'une fonction $F_{V_\sbullet}:\mathbb N\rightarrow+\infty$ telle que  $F_{V_\sbullet}(n)=O(n^{d-1})$ pour $n\rightarrow+\infty$ et que \[\rang_k(V_n)\leqslant \frac{\vol(V_\sbullet)}{d!}n^d+F_{V_\sbullet}(n),\] on peut utiliser la th\'eorie des corps d'Okounkov d\'evelopp\'ee dans \cite{Lazarsfeld_Mustata08,Kaveh_Khovanskii} pour relier $V_\sbullet$ \`a un corps convexe $\Delta$ dans $\mathbb R^d$. On peut majorer le rang de $V_n$ par le nombre de points \`a coordonn\'ees entiers dans $n\Delta$ et ensuite faire appel \`a un r\'esultat de Betke et B\"or\"oczky \cite{Betke_Boroczky99} pour obtenir la majoration asymptotique. Cependant, cette m\'ethode est inad\'equate pour l'application dans la situation arithm\'etique. En effet, pour obtenir une majoration de la fonction de Hilbert-Samuel arithm\'etique, il faut appliquer la majoration de la fonction de Hilbert-Samuel g\'eom\'etrique \`a une famille continue de syst\`emes lin\'eaires gradu\'es. Cependant, le terme sous-dominant dans la majoration de la fonction de Hilbert-Samuel g\'eom\'etrique obtenue par cette m\'ethode d\'epend du bord du corps convexe associ\'e au syst\`eme lin\'eaire gradu\'e. Il est difficile d'obtenir un contr\^ole explicite et uniforme pour la famille de syst\`emes lin\'eaires gradu\'es qui apparaissent dans l'\'etude de la fonction de Hilbert-Samuel arithm\'etique. En outre, cette m\'ethode ne peut pas \^etre directement appliqu\'ee dans la situation arithm\'etique car dans l'analogue arithm\'etique du corps d'Okounkov, il n'y a pas de lien entre l'ensemble des sections effectives du faisceau inversible hermitien avec l'ensemble des points de coordonn\'ees enti\`eres dans le corps d'Okounkov arithm\'etique associ\'e.

Pour d\'emontrer le th\'eor\`eme \ref{Thm:HMgeometryique}, le point cl\'e est d'adopter un point de vue arithm\'etique. En effet, l'approche de $\mathbb R$-filtration s'applique \'egalement dans le cadre de la g\'eom\'etrie arithm\'etique sur le corps de fonctions, o\`u consid\`ere $X$ comme une fibration au-dessus d'une courbe projective r\'eguli\`ere sur $k$. Une telle fibration est toujours r\'ealisable, quitte \`a remplacer $X$ par une modification birationnelle, o\`u la fonction volume reste invariante. On utilise ainsi un argument de nature arithm\'etique comme dans la strat\'egie de d\'emonstration du th\'eor\`eme \ref{Thm:majorationgeom} et ram\`ene le probl\`eme \`a un probl\`eme similaire pour la fibre g\'en\'erique de $X$, qui est un sch\'ema projectif et int\`egre de dimension $\dim(X)-1$ d\'efinie sur le corps de fonction de la courbe de base. La majoration est obtenue par r\'ecurrence sur la dimension de $X$, et le majorant d\'epend du choix d'un tour de fibrations sur courbes d'une modification birationnelle de $X$. Dans le cas o\`u la caract\'eristique de $k$ est z\'ero, on peut utiliser la $\mathbb R$-filtration de Harder-Narasimhan. Cependant, dans le cas o\`u la caract\'eristique de $k$ est positif, il faut utiliser la filtration par minima. Le majorant est l\'eg\`erement plus grand, mais toujours de m\^eme ordre de grandeur.

Cette approche de $\mathbb R$-filtration, qui s'applique \`a la fois aux cas g\'eom\'etrique et arithm\'etique, combine les avantages de plusieurs m\'ethodes mentionn\'ees plus haut. D'abord le majorant de la fonction de Hilbert-Samuel g\'eom\'etrique ou arithm\'etique est obtenu par une formule de r\'ecurrence sur la dimension de $X$, qui rend le calcul explicit. Deuxi\`emement, la contribution arithm\'etique du syst\`eme lin\'eaire gradu\'e par rapport aux courbes projectives r\'eguli\`eres figurant dans le tour de fibrations ressemble beaucoup \`a la contribution du faisceau inversible dualisant dans l'approche de Koll\'ar et Matsusaka.  Enfin, cette m\'ethode peut \^etre naturellement g\'en\'eralis\'ee dans le cadre de syst\`eme lin\'eaire gradu\'e filtr\'e comme dans \cite{Boucksom_Chen}, qui permet de d\'ecouvrir de nouveaux ph\'enom\`enes en g\'eom\'etrie arithm\'etique. On \'etablit par exemple le r\'esultat suivant.

\begin{theo}\label{Thm:majorationdeminima}
Soit $\mathscr X$ une vari\'et\'e arithm\'etique projective d\'efinie sur l'anneau des entiers alg\'ebriques d'un corps de nombres $K$. Il existe une application $\widehat{\epsilon}$ que l'on explicitera, de l'ensemble des faisceaux inversibles hermitiens  gros sur $\mathscr X$ vers $[0,+\infty[$, qui v\'erifie les conditions suivantes~:
\begin{enumerate}[(a)]
\item si $\overline{\mathscr L}$ et $\overline{\mathscr L}{}'$ sont deux faisceaux inversibles hermitiens gros tels que $\overline{\mathscr L}{}^\vee\otimes\overline{\mathscr L}{}'$ poss\`ede au moins une section effective non-nulle, alors $\widehat{\epsilon}(\overline{\mathscr L})\leqslant\widehat{\epsilon}(\overline{\mathscr L}{}')$;
\item pour tout faisceau inversible hermitien gros $\overline{\mathscr L}$ sur $\mathscr X$, on a \[\sum_{i=1}^{r}\max\big(\lambda_i(H^0(\mathscr X,\mathscr L),\|.\|_{\sup}),0\big)\leqslant\frac{\hvol(\overline{\mathscr L})}{(d+1)!}+\widehat{\epsilon}(\overline{\mathscr L}),\]
o\`u $d+1$ est la dimension de Krull du sch\'ema $\mathscr X$, et $r=\rang_{\mathbb Z}H^0(\mathscr X,\mathscr L)$;
\item pour tout faisceau inversible hermitien gros $\overline{\mathscr L}$, on a
$\widehat{\epsilon}(\overline{\mathscr L}{}^{\otimes n})\leqslant n^d\widehat{\epsilon}(\overline{\mathscr L})$.
\end{enumerate} 
\end{theo}

La diff\'erence principale  entre ce th\'eor\`eme et le th\'eor\`eme \ref{Thm:majorationgeom} est dans la condition (c). Au lieu d'avoir un terme d'erreur d'ordre $n^d\ln(n)$, le terme d'erreur $\widehat{\epsilon}(\overline{\mathscr L}{}^{\otimes n})$ ici ({\huayi qui sera pr\'ecis\'e dans la d\'emonstration du th\'eor\`eme \ref{Thm:HSarithmetic}}) est d'ordre $n^d$ lorsque $n\rightarrow+\infty$. {\huayi Un r\'esultat similaire pour les minima successifs absolus est \'etabli dans le th\'eor\`eme \ref{Thm:HMarithmetique}.} Ce r\'esultat est frappant car dans une formule de d\'eveloppement d'une fonction arithm\'etique de type Hilbert-Samuel, on attend souvent que le terme sous-dominant soit d'ordre $O(n^d\ln(n))$ quand $n$ tend vers l'infini. L'estimation (c) dans le th\'eor\`eme \ref{Thm:majorationdeminima} sugg\`ere que le terme d'ordre $O(n^d\ln(n))$ dans le d\'eveloppement de la fonction de Hilbert-Samuel arithm\'etique provient notamment de la comparaison entre diff\'erents invariants arithm\'etiques de fibr\'es vectoriels norm\'es sur la courbe arithm\'etique, ou de la distorsion entre les choix de diff\'erents m\'etriques. La contribution g\'eom\'etrique pourrait agir plut\^ot sur le terme suivant d'ordre $O(n^d)$. On esp\`ere que ce nouveau point de vue nous aidera \`a mieux comprendre le r\^ole de la g\'eom\'etrie du sch\'ema $\mathscr X_K$ dans l'\'etude de la fonction de Hilbert-Samuel arithm\'etique.

Pendant la r\'edaction de l'article, Xinyi Yuan et Tong Zhang m'ont communiqu\'e leur travaux \cite{Yuan_Zhang14}, o\`u ils ont obtenu ind\'ependemment des r\'esultats similaires aux th\'eor\`emes \ref{Thm:majorationgeom} et \ref{Thm:majorationdeminima}. Leur approche a certaines similitudes compar\'ee \`a celle adopt\'ee dans cet article, notamment l'arguement de r\'ecurrence sur la dimension de la vari\'et\'e g\'eom\'etrique ou arithm\'etique. La diff\'erence majeure entre les deux approches repose sur la r\'ealisation du proc\'ed\'e de r\'ecurrence. Dans \cite{Yuan_Zhang14}, l'argument de Yuan et Zhang est bas\'e sur la positivit\'e du fibr\'e inversible et les termes d'erreur dans leurs th\'eor\`emes d\'ependent des nombres d'intersections de certains fibr\'es inversibles auxiliaires qui contr\^olent la positivit\'e du fibr\'e inversible dont on veut borner la fonction de Hilbert-Samuel. Cependant, dans l'article pr\'esent, on choisit de g\'en\'eralizer le probl\`eme dans le cadre des syst\`emes lin\'eaires gradu\'es, munis des structures de m\'etriques et puis ramener le probl\`eme \`a la fibre g\'en\'erique de la vari\'et\'e (arithmetique ou fibr\'ee sur une courbe) afin de r\'eduire la dimension. Il est une question d\'elicate de comparer les termes d'erreur obtenus par ces m\'ethodes diff\'erentes car la liaison entre les deux approches est encore obscure, mais il n'est pas exclu qu'une combinaison astucieuse  de ces m\'ethodes conduira \`a une majoration de la fonction de Hilbert-Samuel g\'eom\'etrique ou arithm\'etique, o\`u le terme d'erreur ne d\'epend que du volume du fibr\'e inversible et le produit d'intersection positif du fibr\'e inversible avec le faisceau dualisant.

L'article est organis\'e comme la suite. Dans le deuxi\`eme paragraphe, on \'etablit un lien entre la valeur maximale du polygone de Harder-Narasimhan d'un fibr\'e vectoriel sur une courbe avec la dimension de l'espace vectoriel des sections globales du fibr\'e vectoriel. Cette comparaison sera utile plus loin dans la majoration de la fonction de Hilbert-Samuel g\'eom\'etrique. Le troisi\`eme paragraphe est consacr\'e \`a un rappel de la notion de pente maximale asymptotique pour les syst\`emes lin\'eaires gradu\'es. C'est un invariant birationnel qui interviendra dans le terme d'erreur de la majoration. Dans le quatri\`eme paragraphe, on propose une nouvelle notion~: tour de fibrations sur courbes, o\`u on consid\`ere une vari\'et\'e projective comme des fibrations successives sur les courbes projectives r\'eguli\`eres d\'efinies sur des corps de plus en plus gros. C'est un outil essentiel pour la majoration de la fonction de Hilbert-Samuel. En utilisant cet outil et la $\mathbb R$-filtration de Harder-Narasimhan, on \'etablit la majoration explicite de la fonction de Hilbert-Samuel g\'eom\'etrique dans le cinqui\`eme paragraphe, sous condition que le corps de base est de caract\'eristique z\'ero. Le sixim\`eme paragraphe est consacr\'e \`a un rappel sur la notion de fibr\'e vectoriel ad\'elique sur un corps de nombres, et la filtration par minima absolus. On obtient dans le septi\`eme paragraphe la majoration explicite de la fonction de Hilbert-Samuel arithm\'etique. Enfin, dans le dernier paragraphe, on d\'emontre la majoration de la fonction de Hilbert-Samuel g\'eom\'etrique dans la cas de caract\'eristique positif, en utilisant la m\'ethode arithm\'etique en consid\'erant la $\mathbb R$-filtration par minima.
\vspace{2mm}

\noindent{\bf Remerciements~:} Je voudrais remercier \'Eric Gaudron pour des remarques qui m'ont aid\'e \`a am\'eliorer la r\'edaction de l'article. 
Pendant la pr\'eparation et la r\'edaction de l'article, j'ai  b\'en\'efici\'e des discussions avec Sebastien Boucksom, je tiens \`a lui exprimer mes gratitudes.
Enfin, je suis reconnaissant \`a Xinyi Yuan et Tong Zhang pour m'avoir communiqu\'e leur article et pour des discussions tr\`es int\'eressantes.  

\section{Degr\'e positif d'un fibr\'e vectoriel}

Soient $k$ un corps et $C$ une courbe projective r\'eguli\`ere d\'efinie sur $k$. Rappelons que la formule de Riemann-Roch montre que, pour tout fibr\'e vectoriel $E$ sur $C$, on a 
\begin{equation}\label{Equ:RR}
h^0(E)-h^1(E)=\deg(E)+\rang(E)(1-g),
\end{equation}
o\`u $h^0(E)$ et $h^1(E)$ sont respectivement la dimension sur le corps $k$ des espaces de cohomologie $H^0(X,E)$ et $H^1(X,E)$, et $g$ d\'esigne le genre de $C$. En utilisant cette formule, on peut relier $h^0(E)$ \`a la valeur maximale du polygone de Harder-Narasimhan de $E$.

\'Etant donn\'e un fibr\'e vectoriel non-nul $E$ sur $C$, la \emph{pente} de $E$ est d\'efinie comme le quotient du degr\'e de $E$ par son rang, not\'ee comme $\mu(E)$. Le fibr\'e vectoriel $E$ est dit \emph{semi-stable} si chaque sous-fibr\'e vectoriel non-nul de $E$ admet une pente $\leqslant\mu(E)$. Si $E$ est un fibr\'e vectoriel non-nul qui n'est pas n\'ecessairement semi-stable, il existe un unique sous-fibr\'e vectoriel $E_{\des}$ de $E$ qui v\'erifie les conditions suivantes
\begin{enumerate}[(a)]
\item pour tout sous-fibr\'e vectoriel non-nul $F$ de $E$, on a $\mu(F)\leqslant\mu(E_{\des})$;
\item si $F$ est un sous-fibr\'e vectoriel non-nul de  $E$ tel que $\mu(F)=\mu(E_{\des})$, alors $F\subset E_{\des}$.
\end{enumerate} Le sous-fibr\'e vectoriel $E_{\des}$ est appel\'e le \emph{sous-fibr\'e d\'estabilisant} de $E$. Sa pente est appel\'ee la \emph{pente maximale} de $E$, not\'ee comme $\mu_{\max}(E)$. 

La condition (b) plus haut implique que le quotient $E/E_{\des}$ est sans torsion, donc est un fibr\'e vectoriel sur $C$. Ainsi on peut construire par r\'ecurrence un drapeau de sous-fibr\'es vectoriels de $E$~:
\[0=E_0\subsetneq E_1\subsetneq \ldots\subsetneq E_n=E\]
tel que $E_i/E_{i-1}=(E/E_{i-1})_{\des}$ pour tout $i\in\{1,\ldots,n\}$. Ce drapeau est appel\'e le  \emph{drapeau de Harder-Narasimhan} de $E$. Chaque sous-quotient $E_i/E_{i-1}$ est un fibr\'e vectoriel semi-stable sur $C$. En outre, si on d\'esigne par  $\alpha_i$ la pente du sous-quotient $E_i/E_{i-1}$, alors les in\'egalit\'es $\alpha_1>\ldots>\alpha_n$ sont v\'erifi\'ees. Il s'av\`ere que le drapeau de Harder-Narasimhan est le seul\label{page:HNunicite} drapeau de sous-fibr\'es vectoriels de $E$ tel que les sous-quotients soient semi-stables et de pentes strictement d\'ecroissantes (cf. \cite[th\'eor\`eme 1.3.4]{Huyb} pour une d\'emonstration). La derni\`ere pente $\alpha_n$ est appel\'ee la \emph{pente minimale} de $E$, not\'ee comme $\mu_{\min}(E)$. C'est aussi la valeur minimale des pentes des fibr\'es vectoriels quotients de $E$. En particulier, les pentes maximale et minimale sont reli\'ees par la formule de dualit\'e suivante~: pour tout fibr\'e vectoriel non-nul $E$ sur $C$, on a 
\[\mu_{\max}(E)+\mu_{\min}(E^\vee)=0.\] 
On d\'esigne par $P_E$ la fonction concave et affine par morceau d\'efinie sur l'intervalle $[0,\rang(E)]$, qui est affine sur chaque intervalle $[\rang(E_{i-1}),\rang(E_i)]$ et de pente $\alpha_i$. Rappelons que le graphe de $P_E$ s'identifie au bord sup\'erieur de l'enveloppe convexe de l'ensemble des points de la forme  $(\rang(F),\deg(F))\in\mathbb R^2$, o\`u $F$ parcourt l'ensemble des sous-fibr\'es vectoriels de $E$. La fonction $P_E$ est appel\'ee le \emph{polygone de Harder-Narasimhan} de $E$.

\begin{defi}
Soit $E$ un fibr\'e vectoriel non-nul sur la courbe projective $C$. On d\'esigne par $\deg_+(E)$ la valeur maximale de la fonction $P_E$ sur l'intervalle $[0,\rang(E)]$, appel\'ee le \emph{degr\'e positif} of $E$. On voit aussit\^ot de la d\'efinition que, si la pente minimale de $E$ est positive, alors $\deg_+(E)$ s'identifie au degr\'e de $E$. On convient que le degr\'e positif du fibr\'e vectoriel nul est z\'ero.
\end{defi}

\begin{lemm}\label{Lem:h0} Soient $C$ une courbe projective r\'eguli\`ere de genre $g$ d\'efinie sur un corps $k$, et $E$ un fibr\'e vectoriel non-nul sur $C$.
\begin{enumerate}[(a)]
\item Si $\mu_{\max}(E)<0$, alors $h^0(E)=0$.
\item Si $\mu_{\min}(E)>2g-2$, alors $h^0(E)=\deg(E)+\rang(E)(1-g)$.
\item Si $\mu_{\min}(E)>0$, alors $|h^0(E)-\deg(E)|\leqslant \rang(E)|g-1|$.
\end{enumerate}
\end{lemm}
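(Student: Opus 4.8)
The plan is to read every statement off the Riemann--Roch formula \eqref{Equ:RR} together with Serre duality, reducing each case to the vanishing or the bounding of $h^1(E)=h^0(E^\vee\otimes\omega_C)$, where $\omega_C$ denotes the canonical bundle (of degree $2g-2$). Throughout I will freely use two elementary facts about the behaviour of slopes: for a line bundle $M$ one has $\mu_{\max}(E\otimes M)=\mu_{\max}(E)+\deg(M)$ and $\mu_{\min}(E\otimes M)=\mu_{\min}(E)+\deg(M)$ (the subbundles of $E\otimes M$ are exactly the twists of those of $E$), together with the duality formula $\mu_{\max}(E^\vee)=-\mu_{\min}(E)$ recalled above.

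For (a): a nonzero $s\in H^0(C,E)$ defines an injection $\mathcal O_C\hookrightarrow E$ of sheaves (as $E$ is torsion-free and $s\neq 0$), and the saturation of its image is a sub-line-bundle $L\subseteq E$ with $\deg(L)\ge\deg(\mathcal O_C)=0$. Then $\mu_{\max}(E)\ge\mu(L)=\deg(L)\ge 0$, contradicting $\mu_{\max}(E)<0$; hence $H^0(C,E)=0$. For (b), by \eqref{Equ:RR} it is enough to show $h^1(E)=0$. Serre duality gives $h^1(E)=h^0(E^\vee\otimes\omega_C)$, and by the twisting and duality formulas $\mu_{\max}(E^\vee\otimes\omega_C)=-\mu_{\min}(E)+(2g-2)<0$, precisely because $\mu_{\min}(E)>2g-2$. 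Applying (a) to $E^\vee\otimes\omega_C$ yields the vanishing, whence $h^0(E)=\deg(E)+\rang(E)(1-g)$.

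For (c), rewrite \eqref{Equ:RR} as $h^0(E)-\deg(E)=\rang(E)(1-g)+h^1(E)$. Since $h^1(E)\ge 0$ and $1-g\ge-|g-1|$, the lower bound $h^0(E)-\deg(E)\ge-\rang(E)|g-1|$ is immediate, so only the upper bound remains, which amounts to $h^1(E)\le\rang(E)(2g-2)$. If $g\le 1$ then $\mu_{\min}(E)>0\ge 2g-2$, so (b) already gives $h^1(E)=0$; this settles $g\in\{0,1\}$. Assume now $g\ge 2$ and set $F=E^\vee\otimes\omega_C$, so that $h^1(E)=h^0(F)$, $\rang(F)=\rang(E)$ and $\mu_{\max}(F)=(2g-2)-\mu_{\min}(E)<2g-2$. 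Passing to the Harder--Narasimhan filtration of $F$ with semistable subquotients $G_1,\dots,G_m$ of slopes $\nu_i<2g-2$, one has $h^0(F)\le\sum_i h^0(G_i)$; the $G_i$ of negative slope contribute $0$ by (a) (being semistable, $\mu_{\max}(G_i)=\nu_i<0$), while for each $G_i$ with $0\le\nu_i<2g-2$ Clifford's inequality for semistable bundles gives $h^0(G_i)\le\tfrac12\deg(G_i)+\rang(G_i)$. Summing, and using $\sum_{\nu_i\ge 0}\deg(G_i)=\deg_+(F)<\rang(F)(2g-2)$, I get $h^0(F)<\tfrac12\rang(F)(2g-2)+\rang(F)=\rang(F)\,g$, and since $g\le 2g-2$ for $g\ge 2$ this yields $h^1(E)=h^0(F)\le\rang(E)(2g-2)$, which is exactly the upper bound sought.

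The main obstacle is precisely this upper bound in (c) for $g\ge 2$: the constant $\rang(E)|g-1|$ is sharp enough that the crude estimate $h^0(F)\le\rang(F)+\deg_+(F)$ (which is provable without Clifford, e.g. by reducing to the globally generated subsheaf spanned by the sections and splitting off trivial subbundles) only gives $h^0(F)<\rang(F)(2g-1)$, missing the target by exactly $\rang(F)$. One genuinely needs the factor $\tfrac12$ furnished by Clifford's theorem for semistable bundles; indeed for $g=2$ the resulting inequality $h^1(E)\le\rang(E)\,g=\rang(E)(2g-2)$ is tight, so there is no slack to spare. Everything else is routine bookkeeping with Riemann--Roch and the Harder--Narasimhan formalism already set up in this section.
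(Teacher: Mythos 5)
Your treatment of (a) and (b) is the same as the paper's (a nonzero global section gives a nonzero map $\mathcal O_C\to E$, hence $\mu_{\max}(E)\geqslant 0$; Serre duality plus (a) applied to $E^\vee\otimes\omega_C$ gives (b)), and so are the lower bound in (c) and the case $g\leqslant 1$. Where you genuinely diverge is the upper bound in (c) for $g\geqslant 2$: you dualize, filter $F=E^\vee\otimes\omega_C$ by Harder--Narasimhan, kill the negative-slope pieces with (a), and control the pieces of slope in $[0,2g-2)$ with Clifford's theorem for semistable bundles; your bookkeeping ($h^1(E)=h^0(F)<\rang(F)\,g\leqslant\rang(E)(2g-2)$) is correct. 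The paper instead stays on the side of $E$: since $\mu_{\min}(E\otimes\omega_C)=\mu_{\min}(E)+2g-2>2g-2$, part (b) gives the exact value $h^0(E\otimes\omega_C)=\deg(E)+\rang(E)(g-1)$, and a nonzero section of $\omega_C$ (which exists since $g\geqslant 2$) embeds $E$ into $E\otimes\omega_C$, so $h^0(E)\leqslant\deg(E)+\rang(E)(g-1)$; combined with the Riemann--Roch lower bound this is exactly \eqref{Equ:comparison}. The paper's route costs one line and uses nothing beyond what is already in the section.

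This also shows that your closing claim --- that one ``genuinely needs the factor $\tfrac12$ furnished by Clifford's theorem'' --- is mistaken: Clifford is needed by \emph{your decomposition}, not by the statement. More importantly, invoking Clifford for semistable bundles is a real weakness here. It is a nontrivial external theorem, not proved in the paper, and it is normally established for smooth curves over an algebraically closed field. Lemma \ref{Lem:h0} is stated, and later used, for regular projective curves over an arbitrary field $k$: in the tower of fibrations the relevant curves live over the function fields $R(C_{i-1})$, and the positive-characteristic section (Theorem \ref{Thm:majorationdeHSbis}) relies on Theorem \ref{Thm:h0etdegplus}, hence on this lemma, over such fields, which are imperfect in characteristic $p$. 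Reducing your argument to the algebraically closed case is delicate precisely there: $C_{\overline k}$ can fail to be regular, and semistability and Harder--Narasimhan filtrations are not obviously preserved under inseparable base change, so you cannot simply base-change your HN pieces and quote Clifford. As written, your proof therefore covers less than the stated generality; to keep your route you would need a Clifford bound valid over an arbitrary base field, whereas the twist-by-$\omega_C$ argument of the paper works verbatim over any $k$.
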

\begin{proof}
(a) Supposons que $E$ poss\`ede une section globale non-nul. Elle correspond \`a un homomorphisme non-nul de  $\mathcal O_C$ vers $E$. Donc on a \[0=\mu(\mathcal O_C)\leqslant\mu_{\max}(E).\]

(b) D'apr\`es la formule de Riemann-Roch  \eqref{Equ:RR} et la dualit\'e de Serre  $h^1(E)=h^0(E^\vee\otimes\omega_C)$, o\`u $\omega_C$ est le faisceau dualisant sur $C$, on obtient
\[h^0(E)-h^0(E^\vee\otimes\omega_C)=\deg(E)+\rang(E)(1-g).\]
Comme \[\mu_{\max}(E^\vee\otimes\omega_C)=\mu_{\max}(E^\vee)+\deg(\omega_C)=2g-2-\mu_{\min}(E),\]
si $\mu_{\min}(E)>2g-2$, alors on a $\mu_{\max}(E^\vee\otimes\omega_C)<0$. Donc $h^0(E^\vee\otimes\omega_C)=0$ compte tenu de (a). Par cons\'equent, l'\'egalit\'e $h^0(E)=\deg(E)-\rang(E)(1-g)$ est v\'erifi\'ee.

(c) D'apr\`es (b), l'in\'egalit\'e est v\'erifi\'ee lorsque $g\leqslant 1$. Dans la suite, on suppose  $g\geqslant 2$. Comme $\mu_{\min}(E)>0$, on a $\mu_{\min}(E\otimes\omega_C)=\mu_{\min}(E)+2g-2>2g-2$. D'apr\`es (b), on obtient
\[h^0(E\otimes\omega_C)=\deg(E\otimes\omega_C)+\rang(E)(1-g)=\deg(E)+\rang(E)(g-1).\] 
Comme $h^0(
\omega_C)>0$, on a
\[h^0(E)\leqslant h^0(E\otimes\omega_C)\leqslant\deg(E)+\rang(E)(g-1).\]
En outre, d'apr\`es la formule de Riemann-Roch \eqref{Equ:RR}, on a $h^0(E)\geqslant \deg(E)+\rang(E)(1-g)$. Donc l'in\'egalit\'e est d\'emontr\'ee.
\end{proof}

\begin{lemm}\label{Lem:majh0ss}
Soient $C$ une courbe projective r\'eguli\`ere d\'efinie sur un corps $k$, et $E$ un fibr\'e vectoriel non-nul sur $C$.
 Si $E$ est semi-stable et de pente $0$, alors $h^0(E)\leqslant\rang(E)$.
\end{lemm}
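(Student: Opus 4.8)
The plan is to argue by induction on the rank $r=\rang(E)$, peeling off a trivial sub-line-bundle at each step. The base of the induction is the rank-one case, and in fact the whole inductive step rests on a single geometric observation: a semi-stable bundle of slope $0$ that has a nonzero global section contains $\mathcal O_C$ as a \emph{saturated} subsheaf. So I would first dispose of the trivial case $h^0(E)=0$ (then $0\leqslant r$ is clear), and otherwise pick a nonzero section $s\in H^0(C,E)$, viewed as a nonzero morphism $s\colon\mathcal O_C\to E$. Since $E$ is torsion-free, $s$ is injective and $\Image(s)\cong\mathcal O_C$. Let $L$ be the saturation of $\Image(s)$ in $E$, a line subbundle. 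On the one hand, semi-stability of slope $0$ gives $\deg(L)\leqslant\mu_{\max}(E)=0$; on the other hand $L$ carries the nonzero section $s$, so $\deg(L)\geqslant 0$. Hence $\deg(L)=0$, the section $s$ is nowhere vanishing, and $L=\Image(s)\cong\mathcal O_C$ is a rank-one subbundle of $E$.

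Next I would examine the quotient $E'=E/\mathcal O_C$, which is a vector bundle of rank $r-1$ because $\mathcal O_C$ is saturated. Additivity of degree gives $\deg(E')=\deg(E)-\deg(\mathcal O_C)=0$, so $\mu(E')=0$. The key point is that $E'$ is again semi-stable of slope $0$: being a quotient bundle of $E$, it satisfies $\mu_{\min}(E')\geqslant\mu_{\min}(E)=0$, while $\mu_{\min}(E')\leqslant\mu(E')=0$; therefore $\mu_{\min}(E')=\mu(E')=0$, which forces all Harder–Narasimhan slopes of $E'$ to equal $0$, i.e. $E'$ is semi-stable of slope $0$. By the induction hypothesis on the rank, $h^0(E')\leqslant\rang(E')=r-1$.

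Finally I would combine these through the long exact sequence attached to
\[0\longrightarrow\mathcal O_C\longrightarrow E\longrightarrow E'\longrightarrow 0,\]
which yields $h^0(E)\leqslant h^0(\mathcal O_C)+h^0(E')$. Using the standing normalisation that $k$ is the field of constants of $C$ (equivalently, the convention under which \eqref{Equ:RR} gives $h^1(\mathcal O_C)=g$), one has $h^0(\mathcal O_C)=1$, whence $h^0(E)\leqslant 1+(r-1)=r$, completing the induction.

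The step I expect to be the main obstacle is the semi-stability of the quotient $E'$: it is the place where one must invoke the slope inequality $\mu_{\min}(E')\geqslant\mu_{\min}(E)$ for quotient bundles and the equality $\mu_{\min}(E)=0$ coming from semi-stability of slope $0$. The production of the saturated copy of $\mathcal O_C$ is a clean degree-pincer argument, but it is worth recording carefully that $\deg(L)=0$ together with the existence of a nonzero section forces $L\cong\mathcal O_C$ (the effective divisor of zeros has degree $0$, hence is empty). I would also flag the use of $h^0(\mathcal O_C)=1$, since the statement genuinely fails when $k$ is not algebraically closed in $k(C)$; this is exactly why the hypothesis is harmless here, being part of the standing conventions on $C$. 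An alternative route, replacing the induction, is to let $F\subseteq E$ be the subsheaf generated by $H^0(C,E)$ and to show that $F$ is globally generated and semi-stable of slope $0$, hence $F\cong\mathcal O_C^{\oplus\rang(F)}$ with $H^0(F)=H^0(E)$; I prefer the inductive argument above as it avoids invoking the full splitting structure theorem.
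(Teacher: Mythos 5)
Your proof is correct and follows essentially the same route as the paper's: peel off a copy of $\mathcal O_C$ coming from a nonzero global section, observe that the quotient $E/\mathcal O_C$ is again a semi-stable vector bundle of slope $0$, and conclude by the cohomology long exact sequence and induction on the rank. Your saturation and degree-pincer argument merely spells out what the paper asserts in one line (that $E/\mathcal O_C$ is either zero or a semi-stable bundle of slope $0$, i.e.\ in particular torsion-free), and your caveat about $h^0(\mathcal O_C)=1$ concerns a convention the paper uses implicitly as well.
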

\begin{proof}
On peut supposer que $E$ poss\`ede une section globale non-nulle, sinon le r\'esultat est trivial. Cette section d\'efinit un homomorphisme non-nul de $\mathcal O_C$ vers $E$. Comme $E$ est semi-stable de pente $0$, le faisceau quotient $E/\mathcal O_C$ est ou bien nul, ou bien un fibr\'e vectoriel semi-stable de pente $0$. De plus, la suite exacte longue de groupes de cohomologie associ\'ee \`a la suite exacte courte $0\rightarrow\mathcal O_C\rightarrow E\rightarrow E/\mathcal O_C\rightarrow 0$ montre que 
\[h^0(E)\leqslant h^0(\mathcal O_C)+h^0(E/\mathcal O_C)=h^0(E/\mathcal O_C)+1.\] 
Par r\'ecurrence sur le rang de $E$, on obtient le r\'esultat.
\end{proof}

\begin{theo}\label{Thm:h0etdegplus} Soient $C$ une courbe projective r\'eguli\`ere de genre $g$ d\'efinie sur un corps $k$, et $E$ un fibr\'e vectoriel non-nul sur $C$. On a
\begin{equation}\label{Equ:comparison}
|{h}^0(E)-\deg_+(E)|\leqslant \rang(E)\max(g-1,1).
\end{equation}
\end{theo}
\begin{proof}
Soit \[0=E_0\subsetneq E_1\subsetneq\ldots\subsetneq E_n=E\] le drapeau de Harder-Narasimhan de $E$. Pour tout $i\in\{1,\ldots,n\}$, soit $\alpha_i$ la pente de $E_{i}/E_{i-1}$. Soit $j$ le plus grand indice tel que $\alpha_j\geqslant 0$. Si un tel indice n'existe pas, on note $j=0$ par convention. On a $\deg_+(E)=\deg(E_j)$ par d\'efinition. En outre, comme $E/E_j$ est ou bien nul ou bien de pente maximale strictement n\'egative, on a $h^0(E/E_j)=0$ et donc $h^0(E)=h^0(E_j)$. Si $j=0$, alors on a $h^0(E)=0=\deg_+(E)$ et l'in\'egalit\'e devient triviale. Dans la suite, on suppose  $j\in\{1,\ldots,n\}$.

On traite d'abord le cas o\`u $g\geqslant 1$. Si $\alpha_j=\mu_{\min}(E_j)>0$, d'apr\`es le lemme \ref{Lem:h0}.(c), on obtient \begin{equation*}|h^0(E)-\deg_+(E)|=|h^0(E_j)-\deg(E_j)|\leqslant\rang(E_j)(g-1),\end{equation*} 
qui implique \eqref{Equ:comparison}. Il reste le cas o\`u $\alpha_j=0$. Dans ce cas-l\`a $E_j/E_{j-1}$ est un fibr\'e vectoriel semi-stable de pente $0$. D'apr\`es le lemme \ref{Lem:majh0ss} on a $h^0(E_{j}/E_{j-1})\leqslant\rang(E_{j}/E_{j-1})$, qui implique que
\begin{equation}\begin{split}\label{Equ:encardh0}
h^0(E_{j-1})\leqslant h^0(E)=h^0(E_j)&\leqslant h^0(E_{j-1})+h^0(E_j/E_{j-1})\\
&\leqslant
h^0(E_{j-1})+\rang(E_j/E_{j-1}).\end{split}\end{equation}
En outre, on a $\deg_+(E)=\deg(E_{j-1})$. Le fibr\'e vectoriel $E_{j-1}$ est ou bien nul, ou bien de pente minimale $>0$. D'apr\`es le lemme \ref{Lem:h0}.(c) on obtient (l'in\'egalit\'e est triviale lorsque $E_{j-1}=0$)
\[|h^0(E_{j-1})-\deg_+(E)|=|h^0(E_{j-1})-\deg(E_{j-1})|\leqslant \rang(E_{j-1})(g-1).\]
Si on combine cette in\'egalit\'e avec  \eqref{Equ:encardh0}, on obtient \eqref{Equ:comparison}.

Dans la suite, on suppose que $g=0$. Comme $\alpha_j=\mu_{\min}(E_j)\geqslant 0>2g-2$, d'apr\`es le lemme \ref{Lem:h0}.(b) on obtient $h^0(E_j)-\deg(E_j)=\rang(E_j)$. Comme on a $ h^0(E_j)=h^0(E)$ et $\deg(E_j)=\deg_+(E)$, le r\'esultat est aussi vrai dans ce cas-l\`a.
\end{proof}

\`A l'aide de $\mathbb R$-filtration de Harder-Narasimhan introduite dans \cite{Chen10b}, on peut interpr\'eter la fonction $\deg_+(.)$ comme une int\'egrale. Soit $E$ un fibr\'e vectoriel non-nul sur $C$. On suppose que son drapeau de Harder-Narasimhan est
\[0=E_0\subsetneq E_1\subsetneq \ldots\subsetneq E_n=E.\]
Pour tout $i\in\{1,\ldots,n\}$, soit $\alpha_i$ la pente du sous-quotient $E_i/E_{i-1}$. On d\'efinit une famille $(\mathcal F^tE)_{t\in\mathbb R}$ de sous-fibr\'es vectoriels de $E$ comme
\[\mathcal F^tE=E_i\;\text{si $\alpha_i\geqslant t>\alpha_{i-1}$},\]
o\`u par convention $\alpha_{0}=+\infty$ et $\alpha_{n+1}=-\infty$. Pour tout nombre r\'eel $t$, on d\'esigne par $\mathcal F^{t+}E$ le sous-fibr\'e vectoriel $\sum_{a>0}\mathcal F^{t+a}E$ de $E$ et on d\'efinit $\sq^t(E)$ le quotient $\mathcal F^tE/\mathcal F^{t+}E$, appel\'e le \emph{sous-quotient} d'indice $t$ de la filtration $\mathcal F$. D'apr\`es la d\'efinition du drapeau de Harder-Narasimhan, on obtient que chaque sous-quotient $\mathrm{sq}^t(E)$ est ou bien le fibr\'e vectoriel nul, ou bien un fibr\'e vectoriel semi-stable de pente $t$. En outre, l'unicit\'e du drapeau de Harder-Narasimhan que l'on a mentionn\'ee dans la page \pageref{page:HNunicite} conduit au crit\`ere suivant de la $\mathbb R$-filtration de Harder-Narasimhan.

\begin{prop}\label{Pro:criteredeHN}
Soit $E$ un fibr\'e vectoriel non-nul sur $C$ et $(\mathcal G^tE)_{t\in\mathbb R}$ une $\mathbb R$-filtration d\'ecroissante en sous-fibr\'es vectoriels de $E$ telle que $\mathcal G^tE=0$ pour $t$ suffisamment positif, $\mathcal G^tE=E$ pour $t$ suffisamment n\'egatif et $\bigcap_{a>0}\mathcal G^{t-a}E=\mathcal G^tE$ pour tout $t\in\mathbb R$. Alors $\mathcal G$ est la $\mathbb R$-filtration de Harder-Narasimhan si et seulement si, pour tout $t\in\mathbb R$, le sous-quotient d'indice $t$ de la filtration $\mathcal G$ est ou bien nul, ou bien un fibr\'e vectoriel semi-stable de pente $t$.
\end{prop}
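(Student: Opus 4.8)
Le plan est de traiter s\'epar\'ement les deux implications. La n\'ecessit\'e est essentiellement acquise~: je v\'erifierais d'abord que la $\mathbb R$-filtration de Harder-Narasimhan $\mathcal F$ satisfait bien les conditions structurelles de l'\'enonc\'e, \`a savoir qu'elle est d\'ecroissante, que $\mathcal F^tE=0$ pour $t>\mu_{\max}(E)$, que $\mathcal F^tE=E$ pour $t\leqslant\mu_{\min}(E)$, et que la continuit\'e \`a gauche $\bigcap_{a>0}\mathcal F^{t-a}E=\mathcal F^tE$ vaut parce que $\mathcal F^tE$ est constant sur chaque intervalle $(\alpha_{i+1},\alpha_i]$. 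La propri\'et\'e sur les sous-quotients n'est alors rien d'autre que la description de $\sq^t(E)$ rappel\'ee juste avant l'\'enonc\'e.

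Pour la suffisance, je partirais d'une filtration $\mathcal G$ v\'erifiant les hypoth\`eses et j'en extrairais un drapeau. Comme $E$ est de rang fini, la fonction $t\mapsto\mathcal G^tE$ ne prend qu'un nombre fini de valeurs~; je noterais $\beta_1>\ldots>\beta_m$ les r\'eels en lesquels elle saute, c'est-\`a-dire exactement les indices o\`u $\sq^{\beta_j}(E)\neq 0$. Le point \`a exploiter avec soin est la continuit\'e \`a gauche~: elle entra\^ine que $\mathcal G^tE$ est constant \'egal \`a $G_j:=\mathcal G^{\beta_j}E$ sur chaque intervalle $(\beta_{j+1},\beta_j]$, et que $\mathcal G^{\beta_j+}E=G_{j-1}$ (avec la convention $G_0=0$, puisque $\mathcal G^tE=0$ pour $t$ assez positif). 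On obtient ainsi un drapeau $0=G_0\subsetneq G_1\subsetneq\ldots\subsetneq G_m=E$ dont les sous-quotients successifs sont $G_j/G_{j-1}=\sq^{\beta_j}(E)$.

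Par hypoth\`ese, chacun de ces sous-quotients est semi-stable de pente $\beta_j$, et les pentes $\beta_1>\ldots>\beta_m$ sont strictement d\'ecroissantes. C'est pr\'ecis\'ement la caract\'erisation du drapeau de Harder-Narasimhan rappel\'ee \`a la page \pageref{page:HNunicite} (cf. \cite[th\'eor\`eme 1.3.4]{Huyb})~: ce drapeau est l'unique drapeau \`a sous-quotients semi-stables de pentes strictement d\'ecroissantes. Par unicit\'e, on aurait donc $G_j=E_j$ et $\beta_j=\alpha_j$ pour tout $j$, et en particulier $m=n$. Comme $\mathcal G$ se reconstitue \`a partir de ce drapeau par la m\^eme recette (continuit\'e \`a gauche, sauts aux $\beta_j=\alpha_j$) que $\mathcal F$, on conclurait $\mathcal G=\mathcal F$.

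Le principal obstacle n'est pas dans l'argument de fond, qui repose enti\`erement sur le th\'eor\`eme d'unicit\'e d\'ej\`a disponible, mais dans la mise en place rigoureuse de la correspondance entre la filtration $\mathcal G$ et son drapeau~: il faut utiliser l'hypoth\`ese de continuit\'e \`a gauche pour localiser exactement les sauts et, surtout, pour identifier $\mathcal G^{\beta_j+}E$ avec $G_{j-1}$, faute de quoi l'identification $\sq^{\beta_j}(E)=G_j/G_{j-1}$ n'est pas assur\'ee. Une fois cette \'etape de comptabilit\'e men\'ee proprement, l'invocation de l'unicit\'e du drapeau de Harder-Narasimhan acheve la d\'emonstration.
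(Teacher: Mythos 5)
Your proof is correct and takes essentially the same approach as the paper: the paper gives no separate proof, presenting the proposition as an immediate consequence of the unicity of the Harder-Narasimhan flag recalled on page \pageref{page:HNunicite} (\cite[th\'eor\`eme 1.3.4]{Huyb}), and your argument---extracting the flag from the filtration via left continuity, then invoking that unicity---is exactly the bookkeeping which this assertion leaves implicit. The one step you assert without justification, namely that finite rank forces $t\mapsto\mathcal G^tE$ to take only finitely many values, does hold here because two nested distinct subbundles (in the paper's sense, with locally free quotients) must have distinct ranks, so any chain of them has length at most $\rang(E)+1$.
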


Soit $E$ un fibr\'e vectoriel non-nul sur $C$. On suppose que son drapeau de Harder-Narasimhan est
\[0=E_0\subsetneq E_1\subsetneq \ldots\subsetneq E_n=E.\]
Pour tout $i\in\{1,\ldots,n\}$, soit $\alpha_i$ la pente du sous-quotient $E_i/E_{i-1}$. On d\'esigne par $\nu_E$ la mesure de probabilit\'e bor\'elienne sur $\mathbb R$ d\'efinie comme 
\[\nu_E(\mathrm{d}t)=-\mathrm{d}\frac{\rang(\mathcal F^tE)}{\rang(E)}=\sum_{i=1}^n\frac{\rang(E_i/E_{i-1})}{\rang(E)}\delta_{\alpha_i}.\]
Avec cette notation, on peut r\'e\'ecrire $\deg_+(E)$ comme
\begin{equation}\label{Equ:degplus}
\deg_+(E)=\rang(E)\int_0^{+\infty}t\,\nu_E(\mathrm{d}t)=\int_0^{\mu_{\max}(E)}\rang(\mathcal F^tE)\,\mathrm{d}t,
\end{equation}
o\`u la derni\`ere \'egalit\'e provient de l'int\'egration par partie et du fait que $\mathcal F^tE=0$ quand $t>\mu_{\max}(E)$. Le th\'eor\`eme \ref{Thm:h0etdegplus} montre alors que
\begin{equation}
\bigg|h^0(E)-\int_0^{\mu_{\max}(E)}\rang(\mathcal F^tE)\,\mathrm{d}t\bigg|\leqslant\rang(E)\max(g-1,1).
\end{equation} 

Dans le cas o\`u le corps de base $k$ est de caract\'eristique z\'ero, d'apr\`es un r\'esultat de Narasimhan et Seshadri \cite{Nara_Se65}, le produit tensoriel de deux fibr\'es vectoriels semi-stables sur $C$ est encore semi-stable. On renvoie les lecteurs dans \cite[\S1.1]{Bost_Chen} pour un survol succinct de diff\'erentes approches autour de la semi-stabilit\'e tensorielle dans la litt\'erature. 
Ce r\'esultat implique que la $\mathbb R$-filtration de Harder-Narasimhan du produit tensoriel de deux fibr\'es vectoriels s'identifie \`a la filtration produit.

\begin{prop}\label{Pro:produitdefiltrationHN}
On suppose que le corps $k$ est de caract\'eristique z\'ero. Soient $E$ et $F$ deux fibr\'es vectoriels non-nuls sur $C$. Si $t$ est un nombre r\'eel, alors on a 
\begin{equation}\label{Equ:HNfiltratio}\mathcal F^{t}(E\otimes F)=\sum_{\begin{subarray}{c}
(a,b)\in\mathbb R^2\\
a+b=t
\end{subarray}}\mathcal F^a(E)\otimes\mathcal F^b(F),
\end{equation}
o\`u $\mathcal F$ d\'esigne la $\mathbb R$-filtration de Harder-Narasimhan.
\end{prop}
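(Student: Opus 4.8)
The plan is to show that the \emph{product filtration}
\[\mathcal G^t(E\otimes F):=\sum_{\begin{subarray}{c}(a,b)\in\mathbb R^2\\ a+b=t\end{subarray}}\mathcal F^a(E)\otimes\mathcal F^b(F)\]
is precisely the Harder--Narasimhan $\mathbb R$-filtration of $E\otimes F$, by verifying the criterion of Proposition \ref{Pro:criteredeHN}. First I would check that $\mathcal G$ is admissible in the sense of that proposition. Since the filtrations $\mathcal F$ on $E$ and on $F$ are finite decreasing filtrations by subbundles, the sum defining $\mathcal G^t$ is finite, and one checks directly that $\mathcal G$ is decreasing and by subbundles, that $\mathcal G^t(E\otimes F)=0$ for $t$ large, that $\mathcal G^t(E\otimes F)=E\otimes F$ for $t$ small, and that $\bigcap_{a>0}\mathcal G^{t-a}(E\otimes F)=\mathcal G^t(E\otimes F)$; each of these reduces to the corresponding property of $\mathcal F$ on the two factors.

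The core of the argument is the computation of the subquotients of $\mathcal G$. I would establish that for every $t\in\mathbb R$ there is a canonical isomorphism
\[\sq^t(\mathcal G)\cong\bigoplus_{\begin{subarray}{c}(a,b)\in\mathbb R^2\\ a+b=t\end{subarray}}\sq^a(E)\otimes\sq^b(F),\]
i.e. that the graded bundle attached to a product filtration is the tensor product of the graded bundles of the factors. This would be obtained from the standard linear-algebra identity for filtered vector spaces, transported to vector bundles: since the Harder--Narasimhan filtrations of $E$ and $F$ are filtrations by subbundles with locally free subquotients, they split Zariski-locally, so the canonical comparison map is locally an isomorphism and hence a global isomorphism of $\mathcal O_C$-modules. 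In particular $\sq^t(\mathcal G)$ is a genuine vector bundle.

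It is at this point that I would invoke the hypothesis that $k$ has characteristic zero. By construction each $\sq^a(E)$ is either zero or semistable of slope $a$, and likewise for $\sq^b(F)$. The theorem of Narasimhan and Seshadri \cite{Nara_Se65} guarantees that the tensor product of two semistable bundles is semistable, and since slopes are additive under tensor product, $\sq^a(E)\otimes\sq^b(F)$ is either zero or semistable of slope $a+b=t$. A finite direct sum of semistable bundles all of the same slope $t$ is again semistable of slope $t$, so $\sq^t(\mathcal G)$ is either zero or semistable of slope $t$. By the criterion of Proposition \ref{Pro:criteredeHN}, $\mathcal G$ coincides with the Harder--Narasimhan $\mathbb R$-filtration of $E\otimes F$, which is exactly the asserted identity \eqref{Equ:HNfiltratio}.

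The main obstacle is the identification of the subquotients, namely the fact that the associated graded of the product filtration really is the tensor product of the two associated gradeds \emph{as vector bundles}, and not merely at the generic point: one has to rule out spurious torsion and keep the degrees under control, which is precisely what the local splitting of subbundle filtrations provides. The other essential, and unavoidable, ingredient is the characteristic-zero hypothesis entering through Narasimhan--Seshadri, since in positive characteristic the tensor product of semistable bundles can fail to be semistable and the product filtration need no longer agree with the Harder--Narasimhan one.
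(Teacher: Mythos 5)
Your proposal is correct and follows essentially the same route as the paper: define the product filtration $\mathcal G$, identify its subquotients as $\bigoplus_{a+b=t}\sq^a(E)\otimes\sq^b(F)$, apply Narasimhan--Seshadri to conclude each subquotient is zero or semistable of slope $t$, and invoke the criterion of Proposition \ref{Pro:criteredeHN}. The only difference is that you spell out the admissibility checks and the local-splitting argument identifying the associated graded of the product filtration, steps the paper leaves implicit.
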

\begin{proof} On d\'esigne par $\mathcal G$ la $\mathbb R$-filtration de $E\otimes F$ telle que $\mathcal G^t(E\otimes F)$ soit d\'efini comme le membre \`a droite de la formule \eqref{Equ:HNfiltratio}. Notre but est de d\'emontrer que la filtration $\mathcal G$ s'identifie \`a la $\mathbb R$-filtration de Harder-Narasimhan de $E\otimes F$. Pour tout $t\in\mathbb R$, le sous-quotient d'indice $t$ de la filtration $G$ s'\'ecrit sous la forme
\[\sq_{\mathcal G}^t(E\otimes F)=\bigoplus_{a+b=t}\sq^a(E)\otimes\sq^b(F).\]
Le fibr\'e vectoriel $\sq^a(E)$ (resp. $\sq^b(F)$) est ou bien nul, ou bien semi-stable de pente $a$ (resp. $b$). D'apr\`es le r\'esultat de Narasimhan et Seshadri, le produit tensoriel $\sq^a(E)\otimes\sq^b(F)$ est un fibr\'e vectoriel nul ou semi-stable de pente $a+b$. Cela montre que le sous-quotient $\sq_{\mathcal G}^t(E\otimes F)$ est nul ou semi-stable de pente $t$. D'apr\`es la proposition \ref{Pro:criteredeHN}, on obtient que la filtration $\mathcal G$ est la $\mathbb R$-filtration de Harder-Narasimhan de $E$.
\end{proof}

\begin{coro}\label{Cor:filtrationdeHNenalg} On suppose que le corps $k$ est de caract\'eristique z\'ero. 
Soit $E_\sbullet=\bigoplus_{n\geqslant 0}E_n$ une $\mathcal O_C$-alg\`ebre gradu\'ee. On suppose que chaque composante homog\`ene $E_n$ est un fibr\'e vectoriel sur $C$. Alors la $\mathbb R$-filtration de Harder-Narasimhan est compatible \`a la structure de $\mathcal O_C$-alg\`ebre de $E_\sbullet$. Autrement dit, pour tout couple d'entiers $(n,m)\in\mathbb N^2$ et tout $(a,b)\in\mathbb R^2$, on a
\begin{equation}
(\mathcal F^aE_n)\cdot(\mathcal F^bE_m)\subset\mathcal F^{a+b}E_{n+m}.
\end{equation}
\end{coro}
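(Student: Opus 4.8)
The plan is to combine the tensor-product compatibility of Proposition \ref{Pro:produitdefiltrationHN} with the functoriality of the Harder-Narasimhan filtration under morphisms of vector bundles. All the characteristic-zero input (the Narasimhan--Seshadri theorem on tensorial semi-stability) has already been absorbed into Proposition \ref{Pro:produitdefiltrationHN}, so what remains is essentially formal.

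First I would fix $(n,m)\in\mathbb N^2$ and $(a,b)\in\mathbb R^2$, and assume $E_n$ and $E_m$ are both non-nuls (otherwise the left-hand side is zero and there is nothing to prove). The graded algebra structure provides a morphism of vector bundles $\pi\colon E_n\otimes E_m\to E_{n+m}$, and by definition $(\mathcal F^aE_n)\cdot(\mathcal F^bE_m)$ is the image under $\pi$ of the sub-fibr\'e $\mathcal F^aE_n\otimes\mathcal F^bE_m$ of $E_n\otimes E_m$ (tensoring the sub-fibr\'es stays injective since they are localement libres). Proposition \ref{Pro:produitdefiltrationHN} gives directly, as one summand of its right-hand side, the inclusion $\mathcal F^aE_n\otimes\mathcal F^bE_m\subset\mathcal F^{a+b}(E_n\otimes E_m)$. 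Hence it suffices to prove that $\pi\big(\mathcal F^{a+b}(E_n\otimes E_m)\big)\subset\mathcal F^{a+b}E_{n+m}$.

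The core step is therefore a functoriality lemma: for any morphism of vector bundles $\varphi\colon G\to H$ on $C$ and any $t\in\mathbb R$ one has $\varphi(\mathcal F^tG)\subset\mathcal F^tH$. I would prove this using the elementary principle that $\Hom(A,B)=0$ whenever $\mu_{\min}(A)>\mu_{\max}(B)$ (which follows by decomposing $A$ and $B$ into their semi-stable Harder--Narasimhan sous-quotients and noting that a non-zero morphism between semi-stable bundles forces the slope of its source to be $\leqslant$ the slope of its target). Assuming $\mathcal F^tH\neq H$ (otherwise there is nothing to prove), consider the composite $\psi\colon\mathcal F^tG\hookrightarrow G\xrightarrow{\ \varphi\ }H\twoheadrightarrow H/\mathcal F^tH$. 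By the description of the filtration through the Harder--Narasimhan flag recalled above, $\mu_{\min}(\mathcal F^tG)\geqslant t$ while $\mu_{\max}(H/\mathcal F^tH)<t$; the principle above forces $\psi=0$, i.e. $\varphi(\mathcal F^tG)\subset\mathcal F^tH$. Applying this to $\varphi=\pi$ and $t=a+b$ yields the desired inclusion, hence the corollary.

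The hard part is conceptual rather than computational, and it is already done: the genuine content lies in Proposition \ref{Pro:produitdefiltrationHN}, whose proof uses the tensorial semi-stability available only in characteristic z\'ero. Once that is granted, the only points requiring a little care are that the image of a fibr\'e vectoriel under a morphism is again a fibr\'e (being sans torsion on a courbe r\'eguli\`ere) and that the slope bounds $\mu_{\min}(\mathcal F^tG)\geqslant t$ and $\mu_{\max}(H/\mathcal F^tH)<t$ are read off correctly from the flag; both are immediate from the definition of $(\mathcal F^tE)_{t\in\mathbb R}$ given before Proposition \ref{Pro:criteredeHN}.
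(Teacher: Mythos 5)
Your proof is correct and follows the same two-step route as the paper: Proposition \ref{Pro:produitdefiltrationHN} gives the inclusion $(\mathcal F^aE_n)\otimes(\mathcal F^bE_m)\subset\mathcal F^{a+b}(E_n\otimes E_m)$, and one then pushes this through the multiplication map $E_n\otimes E_m\rightarrow E_{n+m}$ using the fact that morphisms of vector bundles on $C$ preserve Harder--Narasimhan $\mathbb R$-filtrations. The only difference is that the paper simply cites \cite[proposition 2.2.4]{Chen10b} for that last fact, whereas you prove it yourself via the standard $\Hom$-vanishing slope argument ($\mu_{\min}(\mathcal F^tG)\geqslant t>\mu_{\max}(H/\mathcal F^tH)$ forces the composite to vanish), and your proof of that lemma is correct.
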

\begin{proof}
Par d\'efinition $(\mathcal F^aE_n)\cdot(\mathcal F^bE_m)$ est l'image canonique de $(\mathcal F^aE_n)\otimes(\mathcal F^bE_m)$ par l'homomorphisme $\varphi_{n,m}:E_n\otimes E_m\rightarrow E_{n+m}$ de la structure de $\mathcal O_C$-alg\`ebre de $E_\sbullet$. La proposition pr\'ec\'edente montre que $(\mathcal F^aE_n)\otimes(\mathcal F^bE_m)$ est contenu dans $\mathcal F^{a+b}(E_n\otimes E_m)$. En outre, d'apr\`es \cite[proposition 2.2.4]{Chen10b}, tout homomorphisme de fibr\'es vectoriels sur $C$ pr\'eserve les $\mathbb R$-filtrations de Harder-Narasimhan. En particulier, on a \[\varphi_{n,m}(\mathcal F^{a+b}(E_n\otimes E_m))\subset\mathcal F^{a+b}(E_{n+m}),\]
d'o\`u le r\'esultat.
\end{proof}

\section{Pente maximal asymptotique}

Soient $C$ une courbe projective r\'eguli\`ere d\'efinie sur un corps $k$, et $\pi:X\rightarrow C$ un morphisme projectif et plat d'un sch\'ema int\`egre $X$ vers $C$. On s'int\'eresse \`a des invariants birationnels de faisceaux inversibles sur $X$. Rappelons que le \emph{volume} d'un faisceau inversible $L$ sur $X$ est d\'efini comme 
\begin{equation}
\vol(L):=\limsup_{n\rightarrow+\infty}\frac{\rang_kH^0(X,L^{\otimes n})}{n^{\dim(X)}/\dim(X)!}.
\end{equation}
On dit que le faisceau inversible $L$ est \emph{gros} si son volume est strictement positif. La fonction volume est un invariant birationnel (cf. \cite[proposition 2.2.43]{LazarsfeldI})~: si $p:X'\rightarrow X$ est un morphisme projectif birationnel d'un sch\'ema int\`egre $X'$ vers $X$, alors on a $\vol(p^*(L))=\vol(L)$. En outre, le faisceau inversible $L$ est gros\label{Page:criteregors} si et seulement si une puissance tensorielle de $L$ peut \^etre d\'ecompos\'ee en le produit tensoriel d'un faisceau inversible ample et un faisceau inversible effectif (i.e. qui poss\`ede au moins une section globale non-nulle). On renvoie les lecteurs dans \cite[corollaire 2.2.7]{LazarsfeldI} pour une d\'emonstration. Ce crit\`ere montre en particulier que les faisceaux inversibles gros forment un c\^one ouvert dans le groupe de Picard de $X$~: si $L$ est un faisceau inversible gros et si $M$ est un faisceau inversible quelconque sur $X$, alors pour tout entier $n$ assez positif, le produit tensoriel $L^{\otimes n}\otimes M$ est un faisceau inversible gros.

Soit $K$ le corps des fonctions rationnelles sur la courbe $C$. On d\'esigne par $\eta:\Spec K\rightarrow C$ le point g\'en\'erique de $C$. Soit $L$ un faisceau inversible sur $X$ tel que $L_\eta$ soit gros. Dans \cite[th\'eor\`eme 4.3.6]{Chen10b}, il est d\'emontr\'e que la suite $\big(\mu_{\max}(\pi_*(L^{\otimes n}))/n\big)_{n\geqslant 1}$ converge dans $\mathbb R$. On d\'esigne par $\mu_{\max}^{\pi}(L)$ la limite de cette suite, appel\'ee la \emph{pente maximale asymptotique} de $L$ relativement \`a $\pi$. Par d\'efinition, on a $\mu_{\max}^{\pi}(L^{\otimes n})=n\mu_{\max}^{\pi}(L)$ pour tout entier $n\geqslant 1$. En outre, si $M$ est un faisceau inversible sur $C$, alors on a (cf. \cite[proposition 4.3.8]{Chen10b})
\begin{equation}\label{Equ:mupimaxtordu}\mu_{\max}^{\pi}(L\otimes\pi^*(M))=\mu_{\max}^{\pi}(L)+\deg(M).\end{equation}

\begin{lemm}
Soit $E$ un fibr\'e vectoriel non-nul sur la courbe $C$. Si $\mu_{\max}(E)>|g-1|$, o\`u $g$ est le genre de $C$, alors on a $h^0(E)>0$.
\end{lemm}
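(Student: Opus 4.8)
The plan is to reduce everything to the destabilizing subbundle. Since $h^0$ is monotone along subbundles — the inclusion of a subbundle $F\subseteq E$ induces an injection $H^0(C,F)\hookrightarrow H^0(C,E)$, so a nonzero global section of $F$ is in particular a nonzero global section of $E$ — it suffices to exhibit one subbundle of $E$ carrying a nonvanishing section. The natural candidate is the destabilizing subbundle $F=E_{\des}$, whose slope is exactly $\mu(F)=\mu_{\max}(E)$.

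First I would record that $F=E_{\des}$ is semistable, so that $\mu_{\min}(F)=\mu(F)=\mu_{\max}(E)$. The hypothesis $\mu_{\max}(E)>|g-1|\geqslant 0$ then gives $\mu_{\min}(F)>0$, which is precisely the condition required to invoke Lemma \ref{Lem:h0}.(c).

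Next I would apply Lemma \ref{Lem:h0}.(c) to $F$: it yields $|h^0(F)-\deg(F)|\leqslant\rang(F)|g-1|$, whence $h^0(F)\geqslant\deg(F)-\rang(F)|g-1|=\rang(F)\bigl(\mu(F)-|g-1|\bigr)$. Because $F$ is nonzero, so $\rang(F)\geqslant 1$, and because $\mu(F)=\mu_{\max}(E)>|g-1|$, the right-hand side is strictly positive. Hence $h^0(F)>0$, and therefore $h^0(E)\geqslant h^0(F)>0$, as desired.

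The only genuine decision is to work with $E_{\des}$ rather than with $E$ itself: applying the coarser comparison of Theorem \ref{Thm:h0etdegplus} directly to $E$ would carry the error term $\rang(E)\max(g-1,1)$, which is too lossy — both because of the full rank $\rang(E)$ and because of the $\max$ with $1$ — to conclude from the sharp threshold $|g-1|$. Passing to the destabilizing subbundle forces $\mu_{\min}>0$, permits the use of the sharper estimate of Lemma \ref{Lem:h0}.(c), and matches the error constant $|g-1|$ exactly to the hypothesis. I do not anticipate any obstacle beyond this observation; in particular the borderline case $g=1$, where $|g-1|=0$ and one only knows $\mu_{\max}(E)>0$, is absorbed uniformly into the same computation.
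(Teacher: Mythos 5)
Your proof is correct and follows exactly the paper's own argument: both pass to the destabilizing subbundle $E_{\des}$, observe that semistability gives $\mu_{\min}(E_{\des})=\mu_{\max}(E)>0$, and then apply Lemma \ref{Lem:h0}.(c) together with the monotonicity $h^0(E)\geqslant h^0(E_{\des})$ to conclude. Your closing remarks on why one must use $E_{\des}$ rather than the coarser bound of Theorem \ref{Thm:h0etdegplus} are accurate but simply make explicit what the paper leaves implicit.
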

\begin{proof}
Soit $E_{\des}$ le sous-fibr\'e d\'estabilisant du fibr\'e vectoriel $E$. On a $\mu_{\min}(E_{\des})=\mu(E_{\des})=\mu_{\max}(E)>0$. D'apr\`es le lemme \ref{Lem:h0} (c), on obtient
\[h^0(E)\geqslant h^0(E_{\des})\geqslant \rang(E_{\des})\mu_{\max}(E)-\rang(E_{\des})|g-1|,\]
d'o\`u le r\'esultat.
\end{proof}

\begin{rema}\label{Rem:grosseur}
Soit $L$ un faisceau inversible sur $X$ qui est g\'en\'eriquement gros. Le lemme pr\'ec\'edent montre que, si $\mathrm{\mu}_{\max}^{\pi}(L)>0$, alors pour tout entier $n$ suffisamment positif, le faisceau inversible $L^{\otimes n}$ est effectif. En effet, d'apr\`es la d\'efinition de $\mu_{\max}^{\pi}(L)$, la pente maximale de $\mu_{\max}(\pi_*(L^{\otimes n}))$ cro\^it lin\'eairement par rapport \`a $n$ lorsque $n$ tend vers l'infini. Elle d\'epasse  $|g-1|$ lorsque $n$ est assez positif.
\end{rema}

\begin{prop}\label{Pro:criteredegros}
Soit $L$ un faisceau inversible sur $X$. Alors $L$ est gros si et seulement si $L_\eta$ est gros et $\mu_{\max}^{\pi}(L)>0$.
\end{prop}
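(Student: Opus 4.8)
The plan is to work with the coherent pushforwards $E_n:=\pi_*(L^{\otimes n})$, which transport the geometry on $X$ to the slope theory on $C$ developed in \S2--\S3. First I would record that each $E_n$ is a vector bundle on $C$: since $X$ is integral and $L^{\otimes n}$ is a line bundle, $\pi_*(L^{\otimes n})$ is torsion-free (a torsion local section would vanish on the dense generic fibre, hence be zero), so it is locally free on the regular curve $C$. Two identifications are then central. On one hand $H^0(X,L^{\otimes n})=H^0(C,E_n)$, so $h^0(E_n)=\rang_k H^0(X,L^{\otimes n})$; on the other hand, passing to the generic fibre, $\rang(E_n)=\dim_K H^0(X_\eta,L_\eta^{\otimes n})=h^0(X_K,L_K^{\otimes n})$. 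Writing $d:=\dim(X_\eta)=\dim(X)-1$, bigness of $L_\eta$ amounts to $\rang(E_n)\gg n^{d}$, while bigness of $L$ amounts to $h^0(E_n)\gg n^{d+1}$. The bridge is Theorem~\ref{Thm:h0etdegplus}, giving $|h^0(E_n)-\deg_+(E_n)|\leqslant\rang(E_n)\max(g-1,1)$, together with the elementary bounds $0\leqslant\deg_+(E_n)\leqslant\rang(E_n)\max(\mu_{\max}(E_n),0)$ coming from \eqref{Equ:degplus}. I would also use throughout that $\rang(E_n)=h^0(X_K,L_K^{\otimes n})=O(n^{d})$, the standard bound for a line bundle on a $d$-dimensional projective variety.

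For the direct implication, assume $L$ big. To see $L_\eta$ big, I invoke the criterion recalled on page~\pageref{Page:criteregors}: some power satisfies $L^{\otimes m}\cong A\otimes N$ with $A$ ample and $N$ effective. Restricting to the generic fibre, $A_\eta$ is ample (an ample line bundle is relatively ample over $C$, hence ample on every fibre, in particular on $X_\eta$) and $N_\eta$ is effective (a nonzero global section of $N$ stays nonzero on the dense generic fibre, as $\pi_*N$ is torsion-free). Thus $L_\eta^{\otimes m}\cong A_\eta\otimes N_\eta$, so $L_\eta$ is big by the same criterion over $K$; in particular $\mu_{\max}^{\pi}(L)$ is now defined. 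To see $\mu_{\max}^{\pi}(L)>0$, I argue by contradiction: if $\mu_{\max}^{\pi}(L)\leqslant 0$, then for every $\varepsilon>0$ one has $\mu_{\max}(E_n)\leqslant\varepsilon n$ for $n\gg0$, whence $\deg_+(E_n)\leqslant\varepsilon n\,\rang(E_n)=O(\varepsilon\,n^{d+1})$ and $h^0(E_n)\leqslant C\varepsilon\,n^{d+1}+O(n^{d})$. Letting $n\to\infty$ and then $\varepsilon\to0$ forces $\vol(L)=0$, contradicting bigness.

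For the converse, assume $L_\eta$ big and $s:=\mu_{\max}^{\pi}(L)>0$. Bigness of $L_\eta$ gives $\rang(E_n)\geqslant c_0\,n^{d}$ for $n\gg0$. The heart of the matter is to produce fixed $\beta>0$ and $c_1>0$ with $\rang(\mathcal F^{\beta n}E_n)\geqslant c_1 n^{d}$ for $n\gg0$, i.e. to show that a \emph{positive proportion} of $E_n$ sits at slope growing linearly in $n$. Granting this, since $\beta<s$ implies $\mu_{\max}(E_n)\geqslant\beta n$ for $n\gg0$, the monotonicity of $t\mapsto\rang(\mathcal F^{t}E_n)$ and \eqref{Equ:degplus} give $\deg_+(E_n)\geqslant\int_0^{\beta n}\rang(\mathcal F^{t}E_n)\,\mathrm{d}t\geqslant\beta n\,\rang(\mathcal F^{\beta n}E_n)\geqslant\beta c_1\,n^{d+1}$; then Theorem~\ref{Thm:h0etdegplus} yields $h^0(E_n)\geqslant\beta c_1 n^{d+1}-O(n^{d})$, which grows like $n^{d+1}$, so $\vol(L)>0$ and $L$ is big.

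The main obstacle is exactly this rank bound, converting positivity of the asymptotic maximal slope into positivity of the volume of the positive-slope part. I would obtain it from the convergence theory of \cite{Chen10b}: the normalized Harder--Narasimhan measures $\bar\nu_n$, push-forwards of $\nu_{E_n}$ by $t\mapsto t/n$, converge weakly to a probability measure $\nu$ whose support has maximum equal to $\lim_n\mu_{\max}(E_n)/n=\mu_{\max}^{\pi}(L)=s$. Since $s>0$ lies in $\mathrm{supp}(\nu)$, the open neighbourhood $(s/3,+\infty)$ has positive mass $\rho_0:=\nu((s/3,+\infty))>0$, so the portmanteau inequality gives $\liminf_n\bar\nu_n((s/3,+\infty))\geqslant\rho_0$; as $\bar\nu_n((s/3,+\infty))\leqslant\rang(\mathcal F^{(s/3)n}E_n)/\rang(E_n)$, this yields $\rang(\mathcal F^{(s/3)n}E_n)\geqslant\tfrac{\rho_0}{2}\rang(E_n)\geqslant\tfrac{\rho_0 c_0}{2}n^{d}$ for $n\gg0$, so $\beta=s/3$ works. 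In characteristic zero one can argue more directly, using the compatibility of the Harder--Narasimhan $\mathbb R$-filtration with the algebra structure of $\bigoplus_n E_n$ (Corollary~\ref{Cor:filtrationdeHNenalg}) to multiply high-slope elements into higher graded pieces; in positive characteristic the same scheme runs with the filtration by minima in place of the Harder--Narasimhan filtration, at the cost of bounded corrections. The delicate point in every case is to guarantee that the part acquiring linear slope keeps rank of order $n^{d}$, which is precisely what the limit measure controls.
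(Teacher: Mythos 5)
Your forward implication follows the paper for the bigness of $L_\eta$ (restrict a decomposition ample $\otimes$ effectif of a power of $L$ to the generic fibre), while your contradiction argument for $\mu_{\max}^{\pi}(L)>0$ --- combining $|h^0(E_n)-\deg_+(E_n)|\leqslant\rang(E_n)\max(g-1,1)$ from Theorem \ref{Thm:h0etdegplus} with $\deg_+(E_n)\leqslant\max(\mu_{\max}(E_n),0)\rang(E_n)$ from \eqref{Equ:degplus} and $\rang(E_n)=O(n^d)$ --- is a correct alternative to the paper's argument, which instead twists by $\pi^*(M^\vee)$ and uses the openness of the big cone together with \eqref{Equ:mupimaxtordu}. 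For the converse the two proofs genuinely diverge: the paper stays elementary (lift a section of $L_\eta^{\otimes e}\otimes A_\eta^\vee$ to a global section of $L^{\otimes e}\otimes A^\vee\otimes\pi^*(M)$ with $M$ ample on $C$, use $\mu_{\max}^{\pi}(L)>0$ and the lemma ``$\mu_{\max}>|g-1|\Rightarrow h^0>0$'' to make $L^{\otimes nm}\otimes\pi^*(M^\vee)^{\otimes m}$ effective, and conclude that a power of $L$ is ample tensor effective); this is characteristic-free and needs no asymptotic machinery. Your proof instead extracts a quantitative lower bound on $\vol(L)$, which is more information but also much heavier.

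There is, however, one step of yours that does not hold as stated: the claim that the weak limit $\nu$ of the normalized measures $\bar\nu_n$ satisfies $\max\mathrm{supp}(\nu)=\lim_n\mu_{\max}(E_n)/n=s$. This is not a formal consequence of weak convergence plus convergence of maximal slopes: if $E_n$ had a destabilizing subbundle of slope $\approx n$ but of rank $o(\rang(E_n))$, all other slopes being $0$, then $\mu_{\max}(E_n)/n\to 1$ while $\bar\nu_n\to\delta_0$. So positive limiting mass above $s/3$ is exactly what must be \emph{proved}, not quoted from ``the convergence theory''; the portmanteau inequality by itself gives nothing here. The correct justification is the multiplicativity you mention only as an aside: by Corollary \ref{Cor:filtrationdeHNenalg} (characteristic $0$), the Harder--Narasimhan filtrations make $W_\sbullet=\bigoplus_{n}E_{n,\eta}$ into a filtered graded linear series, and \cite[lemme 1.6]{Boucksom_Chen} then shows that $W^t_\sbullet=\bigoplus_n\mathcal F^{nt}E_{n,\eta}$ contains an ample divisor for every $t<s$, whence $\vol(W^t_\sbullet)>0$ and $\rang(\mathcal F^{nt}E_n)\geqslant c\,n^d$ for $n$ large --- your rank bound, with no measure theory at all. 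Shorter still, the integral formula the paper itself uses in the proof of Theorem \ref{Thm:majorationdeHS} (from \cite[corollaire 1.13]{Boucksom_Chen}) gives $\vol(L)=\vol(E_\sbullet)=(d+1)\int_0^{s}\vol(W^t_\sbullet)\,\mathrm{d}t>0$ directly. In positive characteristic one must, as you note, replace the Harder--Narasimhan filtration by the filtration by minima of \S8, whose multiplicativity \eqref{Equ:filtree} and bounded comparison with $\mu_{\max}$ play the same role. With this repair your argument is complete and correct, though the paper's qualitative construction remains the shorter and more robust route.
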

\begin{proof}
``$\Longrightarrow$'': Soit $L$ un faisceau inversible gros sur $X$. D'apr\`es le crit\`ere de grosseur que l'on a mentionn\'e plus haut dans la page \pageref{Page:criteregors}, il existe un entier $n\geqslant 1$, un faisceau inversible ample $A$ et un faisceau inversible effectif $L'$ sur $X$ tels que $L^{\otimes n}\cong A\otimes L'$. La restriction de cette formule \`a la fibre g\'en\'erique de $X$ donne une d\'ecomposition de $L_\eta^{\otimes n}$ en produit tensoriel d'un faisceau inversible ample et un faisceau inversible effectif. Cela montre que $L_\eta$ est gros. 

Comme $L$ est un faisceau inversible gros, pour tout entier $n$ assez positif, $L^{\otimes n}$ poss\`ede au moins une section globale non-nulle (cf. la remarque \ref{Rem:grosseur}). Par cons\'equent, on a $\mu_{\max}(\pi_*(L^{\otimes n}))\geqslant 0$ pour tout entier $n$ assez positif. On en d\'eduit $\mu_{\max}^{\pi}(L)\geqslant 0$. Soit $M$ un faisceau inversible sur $C$ tel que $\deg(M)>0$. Comme $L$ est gros, il existe un entier $n\geqslant 1$ tel que $L^{\otimes n}\otimes\pi^*( M^\vee)$ soit gros. On a alors 
$\mu_{\max}^{\pi}(L^{\otimes n}\otimes \pi^*(M^\vee))\geqslant 0$. D'apr\`es \eqref{Equ:mupimaxtordu}, cela implique que
\[n\mu_{\max}^{\pi}(L)\geqslant\deg(M)>0,\]
d'o\`u $\mu_{\max}^{\pi}(L)>0$.

``$\Longleftarrow$'': On suppose que $L$ est un faisceau inversible sur $X$ qui est g\'en\'eriquement gros et tel que $\mu_{\max}^{\pi}(L)>0$. On fixe un faisceau inversible ample $A$ sur $X$. Comme $L_\eta$ est suppos\'e \^etre gros, il existe un entier $d\geqslant 1$ tel que $L_\eta^{\otimes d}\otimes A_\eta^\vee$ poss\`ede une section globale non-nulle $s$. La section $s$ se rel\`eve en une section rationnelle de $L^{\otimes d}\otimes A^\vee$ dont le diviseur est effectif \`a un diviseur vertical pr\`es. Il existe alors un faisceau inversible ample $M$ sur $C$ tel que $s$ se prolonge en une section globale non-nulle de $L^{\otimes d}\otimes A^\vee\otimes\pi^*(M)$. En outre, comme $\mu_{\max}^{\pi}(L)>0$, il existe un entier $n\geqslant 1$ tel que
\[\mu_{\max}^{\pi}(L^{\otimes n}\otimes\pi^*(M^\vee))=n\mu_{\max}^{\pi}(L)-\deg(M)>0.\]
D'apr\`es le lemme pr\'ec\'edent (voir aussi la remarque qui le suit), il existe alors un entier $m\geqslant 1$ tel que $L^{\otimes nm}\otimes\pi^*(M^\vee)^{\otimes m}$ soit effectif. On en d\'eduit que le faisceau inversible 
\[L^{md+nm}\otimes (A^{\otimes m})^\vee\cong (
L^{\otimes d}\otimes A^\vee\otimes\pi^*(M))^{\otimes m}\otimes(L^{\otimes nm}\otimes\pi^*(M^\vee)^{\otimes m})\]
admet une section globale non-nulle. Cela montre que $L$ est un faisceau inversible gros. 
\end{proof}

\begin{coro}\label{Cor:mumaxbir}
La pente maximale asymptotique est un invariant birationnel pour les faisceaux inversibles g\'en\'eriquement gros sur $X$.
\end{coro}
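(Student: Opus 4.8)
Le plan est de ramener l'invariance birationnelle de $\mu_{\max}^{\pi}$ à celle, déjà connue, de la fonction volume, en passant par le critère de grosseur de la proposition \ref{Pro:criteredegros}. Soit $p:X'\rightarrow X$ un morphisme projectif birationnel d'un schéma intègre $X'$ vers $X$, et posons $\pi'=\pi\circ p:X'\rightarrow C$. Comme $p$ induit un morphisme birationnel $X'_\eta\rightarrow X_\eta$ entre les fibres génériques, l'invariance birationnelle du volume (rappelée plus haut, cf. \cite[proposition 2.2.43]{LazarsfeldI}) entraîne que, pour tout faisceau inversible $N$ sur $X$, le faisceau $N$ est gros si et seulement si $p^*N$ l'est, et de même $N_\eta$ est gros si et seulement si $(p^*N)_\eta$ l'est. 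En particulier, si $L$ est génériquement gros sur $X$, alors $p^*L$ est génériquement gros sur $X'$, et les deux membres de l'égalité à démontrer sont bien définis.

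La première étape consiste à exprimer $\mu_{\max}^{\pi}(L)$ à l'aide de la seule notion de grosseur. Soit $L$ un faisceau inversible génériquement gros sur $X$. Pour tout entier $q\geqslant 1$ et tout faisceau inversible $M$ sur $C$, la restriction $(\pi^*M)_\eta$ est triviale, donc $(L^{\otimes q}\otimes\pi^*M)_\eta\cong L_\eta^{\otimes q}$ est gros~; le faisceau $L^{\otimes q}\otimes\pi^*M$ est ainsi génériquement gros. La proposition \ref{Pro:criteredegros}, combinée à la formule \eqref{Equ:mupimaxtordu} et à la relation $\mu_{\max}^{\pi}(L^{\otimes q})=q\,\mu_{\max}^{\pi}(L)$, donne alors l'équivalence
\[L^{\otimes q}\otimes\pi^*M\text{ est gros}\iff\mu_{\max}^{\pi}(L^{\otimes q}\otimes\pi^*M)>0\iff q\,\mu_{\max}^{\pi}(L)+\deg(M)>0.\]
Autrement dit, $L^{\otimes q}\otimes\pi^*M$ est gros si et seulement si $-\deg(M)/q<\mu_{\max}^{\pi}(L)$.

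La deuxième étape utilise un argument de densité. L'ensemble des degrés de faisceaux inversibles sur $C$ est un sous-groupe $\delta\mathbb Z$ de $\mathbb Z$ avec $\delta\geqslant 1$, de sorte que l'ensemble $\{-\deg(M)/q\,:\,q\geqslant 1,\ M\in\Pic(C)\}$ est dense dans $\mathbb R$. Joint à l'équivalence précédente, ceci montre que
\[\mu_{\max}^{\pi}(L)=\sup\big\{-\deg(M)/q\,:\,q\geqslant 1,\ M\in\Pic(C),\ L^{\otimes q}\otimes\pi^*M\text{ est gros}\big\}.\]
La dernière étape applique cette formule à $p^*L$ relativement à $\pi'$~: comme $(\pi')^*M=p^*\pi^*M$, on a $(p^*L)^{\otimes q}\otimes(\pi')^*M\cong p^*(L^{\otimes q}\otimes\pi^*M)$, faisceau qui est gros si et seulement si $L^{\otimes q}\otimes\pi^*M$ l'est, par invariance birationnelle de la grosseur. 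Les deux ensembles dont on prend la borne supérieure coïncident donc, d'où $\mu_{\max}^{\pi'}(p^*L)=\mu_{\max}^{\pi}(L)$.

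Le point le plus délicat me semble être la justification propre de la formule variationnelle de la deuxième étape~: il faut s'assurer que l'ensemble des valeurs $-\deg(M)/q$ est suffisamment dense dans $\mathbb R$ pour que la borne supérieure restitue \emph{exactement} $\mu_{\max}^{\pi}(L)$, ce qui repose sur l'existence de faisceaux inversibles de tous les degrés dans $\delta\mathbb Z$ sur $C$ et sur le passage aux puissances tensorielles d'exposant $q\rightarrow+\infty$. Une fois cette caractérisation de $\mu_{\max}^{\pi}(L)$ en termes de grosseur établie, l'invariance birationnelle recherchée en découle immédiatement, puisque la grosseur des faisceaux mis en jeu est elle-même un invariant birationnel.
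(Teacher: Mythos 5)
Your proof is correct and is essentially the paper's own argument: both establish a variational formula expressing $\mu_{\max}^{\pi}(L)$ as the supremum of $-\deg(M)/q$ over twists $L^{\otimes q}\otimes\pi^*(M)$ that are big (bigness being detected through la proposition \ref{Pro:criteredegros} and la formule \eqref{Equ:mupimaxtordu}), and then conclude by the birational invariance of the volume, hence of bigness. The only difference is that the paper lets the twist run over negative powers of a fixed ample bundle on $C$ (so over positive thresholds $n\deg(M)/m$ only), whereas you let $M$ run over all of $\Pic(C)$; your variant is marginally more robust, since the supremum stays non-vacuous even when $L$ is generically big but $\mu_{\max}^{\pi}(L)\leqslant 0$, a case in which the paper's supremum is taken over the empty set.
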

\begin{proof}
Soit $M$ un faisceau inversible ample sur $C$. On affirme que, pour tout faisceau inversible $L$ sur $X$ qui est g\'en\'eriquement gros, la valeur $\mu_{\max}^{\pi}(L)$ est \'egale \`a
\[\sup\Big\{\frac{n\deg(M)}{m}\,\Big|\,(n,m)\in\mathbb N_{\geqslant 1}^2,\,L^{\otimes m}\otimes\pi^*(M^\vee)^{\otimes n}\text{ est gros}\Big\}.\]
En effet, d'apr\`es la proposition pr\'ec\'edente, $L^{\otimes m}\otimes \pi^*(M^\vee)^{\otimes n}$ est gros si et seulement si
\[\mu_{\max}^{\pi}(L^{\otimes m}\otimes \pi^*(M^\vee)^{\otimes n})=m\mu_{\max}^{\pi}(L)-n\deg(M)>0,\]
ou de fa\c{c}on \'equivalente, $\mu_{\max}^{\pi}(L)>(n/m)\deg(M)$. Comme la fonction volume est un invariant birationnel, on obtient que, pour tout morphisme projectif et birationnel $p:X'\rightarrow X$, le faisceau inversible $L^{\otimes m}\otimes\pi^*(M^\vee)^{\otimes n}$ est gros si et seulement si \[p^*(L^{\otimes m}\otimes\pi^*(M^{\vee})^{\otimes n})\cong p^*(L^{\otimes m})\otimes (\pi p)^*(M^\vee)^{\otimes n}\] l'est. D'o\`u $\mu_{\max}^{\pi}(L)=\mu_{\max}^{\pi}(p^*L)$.
\end{proof}

Dans la suite, on g\'en\'eralise la construction de pente maximale asymptotique aux syst\`emes gradu\'es en fibr\'es vectoriels. Soit $L$ un faisceau inversible sur $X$. On entend par \emph{syst\`eme gradu\'e en fibr\'es vectoriels} de $L$ toute sous-$\mathcal O_C$-alg\`ebre gradu\'ee de $\bigoplus_{n\geqslant 0}\pi_*(L^{\otimes n})$. Si $E_\sbullet$ est un syst\`eme gradu\'e en fibr\'es vectoriels de $L$, alors sa fibre g\'en\'erique $E_{\sbullet,\eta}:=\bigoplus_{n\geqslant 0}E_{n,\eta}$ est un syst\`eme lin\'eaire gradu\'e de $L_\eta$. On dit que $E_{\sbullet,\eta}$ \emph{contient un diviseur ample} si les conditions suivantes sont satisfaites (cf. la condition (C) dans \cite[d\'efinition 2.9]{Lazarsfeld_Mustata08})~:
\begin{enumerate}[(a)]
\item l'espace vectoriel $E_{n,\eta}$ sur $K$ est non-nul pour tout entier $n$ suffisamment positif,
\item il existe un faisceau inversible ample $A_\eta$ sur $X_\eta$, un entier $p\geqslant 1$ et une section globale non-nulle $s$ de $L^{\otimes p}_\eta\otimes A_\eta^\vee$ tels que, pour tout $n\in\mathbb N$, l'image de l'homomorphisme
\[\xymatrix{\relax H^0(X_\eta,A_\eta^{\otimes n})\ar[rr]^-{\cdot s^{n}}&&H^0(X_\eta,L_\eta^{\otimes np})}\]
soit contenue dans $E_{np,\eta}$.
\end{enumerate}
Cette condition revient \`a la grosseur de $L_\eta$ lorsque $E_\sbullet=\bigoplus_{n\geqslant 0}\pi_*(L^{\otimes n})$ est le syst\`eme gradu\'e total. Si la fibre g\'en\'erique de $E_\sbullet$ contient un diviseur ample, alors la suite de pentes maximales normalis\'ee $(\mu_{\max}(E_n)/n)_{n\geqslant 1}$ converge dans $\mathbb R$ (cf. \cite[th\'eor\`eme 4.3.1]{Chen10b}). On d\'esigne par $\mu_{\max}^{\mathrm{asy}}(E_\sbullet)$ sa limite. Pour tout entier $n\geqslant 0$, le fibr\'e vectoriel $E_n$ est un sous-fibr\'e vectoriel de $\pi_*(L^{\otimes n})$. On obtient donc $\mu_{\max}(E_n)\leqslant\mu_{\max}(\pi_*(L^{\otimes n}))$. Cela implique que $\mu_{\max}^{\mathrm{asy}}(E_\sbullet)\leqslant\mu_{\max}^{\pi}(L)$.
\begin{rema}\label{Rem:systemegradue}
Comme la fibre g\'en\'erique de $E_\sbullet$ est un anneau int\`egre, on obtient du corollaire \ref{Cor:filtrationdeHNenalg} que, dans le cas o\`u le corps $k$ est de caract\'eristique $0$, si $E_n$ et $E_m$ sont non-nuls, alors on a\footnote{En effet, l'homomorphisme naturel de $E_{n,\des}\otimes E_{m,\des}$ vers $E_{n+m}$ est non-nul. En outre, le corollaire \ref{Cor:filtrationdeHNenalg} montre que $E_{n,\des}\otimes E_{m,\des}$ est semi-stable de pente $\mu_{\max}(E_n)+\mu_{\max}(E_m)$. On obtient donc l'in\'egalit\'e souhait\'ee.} 
\[\mu_{\max}(E_{n+m})\geqslant\mu_{\max}(E_n)+\mu_{\max}(E_m).\]
Cela montre que $\mu_{\max}^{\mathrm{asy}}(E_\sbullet)\geqslant \mu_{\max}(E_n)/n$ d\`es que $E_n$ est non-nul ($n\geqslant 1$).
\end{rema}

Soient $L$ un faisceau inversible sur $X$ et $E_\sbullet$ un syst\`eme gradu\'e en fibr\'es vectoriels de $L$. On d\'efinit le \emph{volume} de $E_\sbullet$ comme
\[\vol(E_\sbullet):=\limsup_{n\rightarrow+\infty}\frac{h^0(E_n)}{n^{\dim(X)}/\dim(X)!}.\]
Lorsque $E_\sbullet$ est le syst\`eme gradu\'e total $\bigoplus_{n\geqslant 0}\pi^*(L^{\otimes n})$, son volume s'identifie au volume de $L$.

Soient $X$ un sch\'ema projectif et int\`egre sur $\Spec k$ et $L$ un faisceau inversible gros sur $X$. Soit $V_\sbullet=\bigoplus_{n\geqslant 0}V_n$ un syst\`eme lin\'eaire gradu\'e de $L$ (i.e. une sous-alg\`ebre gradu\'ee de $\bigoplus_{n\geqslant 0}H^0(X,L^{\otimes n})$). On suppose que $V_\sbullet$ contient un diviseur ample, c'est-\`a-dire que $V_n\neq 0$ pour $n$ assez positif, et qu'il existe un faisceau inversible ample $A$ sur $X$, un entier $p\geqslant 1$ et une section globale non-nulle $s$ de $L^{\otimes p}\otimes A^\vee$ tels que, pour tout $n\in\mathbb N$, on a 
\[\Image(\xymatrix{\relax H^0(X,A^{\otimes n})\ar[r]^-{\cdot s^{n}}&H^0(X,L^{\otimes np}))}\subset V_{np}.\]
Rappelons que le \emph{volume} de $V_\sbullet$ est d\'efini comme 
\[\vol(V_\sbullet)=\limsup_{n\rightarrow+\infty}\frac{\rang_k(V_n)}{n^{\dim(X)}/(\dim X)!}.\]
Pour tout entier $n\geqslant 0$, on d\'esigne par $V_{n,K}$ le sous-$K$-espace vectoriel de $H^0(X_\eta,L_\eta^{\otimes n})$ engendr\'e par l'image canonique $V_n$. Il s'av\`ere que $V_{\sbullet,K}:=\bigoplus_{n\geqslant 0}V_{n,K}$ est un syst\`eme lin\'eaire gradu\'e de $L_\eta$ qui contient un diviseur ample.

Dans la suite, on construit un syst\`eme gradu\'e en fibr\'es vectoriels $E_\sbullet$ tel que $E_{\sbullet,\eta}$ coincide \`a $V_{\sbullet,K}$ et que $\vol(E_\sbullet)=\vol(V_\sbullet)$.

\begin{theo}\label{Thm:passageauxfibres} 
Soit $V_\sbullet$ un syst\`eme lin\'eaire gradu\'e de $L$ qui contient un diviseur ample. Pour tout entier $n\geqslant 0$, soit $E_n$ le sous-$\mathcal O_C$-module de $\pi_*(L^{\otimes n})$ engendr\'e\footnote{C'est-\`a-dire que $E_n$ est l'image de l'homomorphisme $\varphi^*(V_n)\rightarrow\pi_*(L^{\otimes n})$ induit par l'inclusion $V_n\rightarrow H^0(X,L^{\otimes n})=\varphi_*(\pi_*(L^{\otimes n}))$ via l'adjonction entre les foncteurs $\varphi_*$ et $\varphi^*$, o\`u $\varphi:C\rightarrow\Spec k$ d\'esigne le morphisme structurel.} par $V_n$.  Alors $E_\sbullet=\bigoplus_{n\geqslant 0}E_n$ est un syst\`eme gradu\'e en fibr\'es vectoriels de $L$, dont la fibre g\'en\'erique contient un diviseur ample. De plus, on a
$\mathrm{vol}(E_\sbullet)=\mathrm{vol}(V_\sbullet)$.
\end{theo}
\begin{proof} Sans perte de g\'en\'eralit\'e, on peut supposer $X$ normal. En effet, par passage \`a la normalisation $\nu:\widetilde X\rightarrow X$, on peut consid\'erer $V_\sbullet$ comme un syst\`eme lin\'eaire gradu\'e de $\nu^*(L)$, qui contient un diviseur ample.

On d\'esigne par $\varphi:C\rightarrow\Spec k$ le morphisme structurel.
Par d\'efinition $E_\sbullet$ est l'image de l'homomorphisme de $\mathcal O_C$-alg\`ebre gradu\'ee $\bigoplus_{n\geqslant 0}\varphi^*(V_n)\rightarrow\bigoplus_{n\geqslant 0}\pi_*(L^{\otimes n})$. Donc il est une sous-$\mathcal O_C$-alg\`ebre gradu\'ee de $\bigoplus_{n\geqslant 0}\pi_*(L^{\otimes n})$, i.e., un syst\`eme gradu\'e en fibr\'es vectoriels de $L$. En outre, il existe un entier $p\geqslant 1$ et  un faisceau inversible ample $A$ sur $X$ tels que le faisceau inversible $L^{\otimes p}\otimes A^\vee$ poss\`ede une section globale non-nulle $s$ v\'erifiant
\begin{equation}\label{Equ:contientundivamp}\Image(\xymatrix{\relax H^0(X,A^{\otimes n})\ar[r]^-{\cdot s^{n}}&H^0(X,L^{\otimes np})})\subset V_{np}\end{equation}
pour tout entier $n\geqslant 1$. Comme $A$ est ample, pour tout entier $m$ suffisamment positif, le $K$-espace vectoriel $H^0(X_\eta,A_\eta^{\otimes m})$ (o\`u $\eta$ est le point g\'en\'erique de $C$) est engendr\'e\footnote{Cela provient d'un analogue dans le cadre de corps de fonction du corollaire 4.8 de \cite{Zhang95}. On peut suivre la strat\'egie de \emph{loc. cit.}. La d\'emonstration est plus simple car les places archim\'edienne ne se manifestent pas dans le probl\`eme.} par $H^0(X,A^{\otimes m})$. Quitte \`a remplacer $A$ par l'une de ses puissance tensorielle, on peut supposer que cette propri\'et\'e est v\'erifi\'ee pour tout entier $m\geqslant 1$. On d\'eduit alors de la relation \eqref{Equ:contientundivamp} que
\[\Image(\xymatrix{\relax H^0(X_\eta,A_\eta^{\otimes n})\ar[r]^-{\cdot s_\eta^{n}}&H^0(X_\eta,L_\eta^{\otimes np})})\subset E_{np,K}.\]
Cela montre que le syst\`eme lin\'eaire $E_{\sbullet,\eta}$ contient un diviseur ample.

Comme le syst\`eme lin\'eaire gradu\'e $V_\sbullet$ contient un diviseur ample, on obtient que, pour tout entier $n$ assez positif, le morphisme rationnel de $X$ vers $\mathbb P(V_n)$ d\'efini par le syst\`eme lin\'eaire est birationnel. Si $p\geqslant 1$ est un entier, on d\'esigne par $u_p:X_p\rightarrow X$ l'\'eclatement de $X$ le long du lieu de base de $V_p$, d\'efini comme
\[X_p=\mathrm{Proj}\Big(\bigoplus_{n\geqslant 0}(\varphi\pi)^*(\mathrm{Sym}^n(V_p))\longrightarrow L^{\otimes np}\Big).\]
Soient en outre $j_p:X_p\rightarrow\mathbb P(V_p)$ le morphisme canonique et $L_p$ le tire en arri\`ere du faisceau universel $\mathcal O_{V_p}(1)$ \`a $X_p$. Il existe alors $N\in\mathbb N$ tel que $j_p$ d\'efinisse un morphisme birationnel entre $X_p$ et son image dans $\mathbb P(V_p)$ d\`es que $p>N$. Soit $p$ un tel entier. Consid\'erons les homomorphisme de $k$-espaces vectoriels comme ci-dessous
\[\xymatrix{\relax\mathrm{Sym}^n(V_p)\ar[r]&H^0(j_p(X_p),\mathcal O_{V_p}(n))\ar[r]&H^0(X_p,L_p^{\otimes n})\ar[r]&H^0(X_p,u_p^*(L^{\otimes np}))}.\] 
Le premier morphisme est surjectif pour $n$ assez positif car on peut identifier $\mathrm{Sym}^n(V_p)$ \`a $H^0(\mathbb P(V_p),\mathcal O_{V_p}(n))$.   Le deuxi\`eme homomorphisme est injectif car $j_p:X_p\rightarrow j_p(X_p)$ est un morphisme birationnel et $L_p^{\otimes n}\cong j_p^*\mathcal O_{V_p}(n)$. Le dernier homomorphisme est d\'efini comme la multiplication par la $k^{\text{i\`eme}}$ puissance  de la section qui d\'etermine le diviseur exceptionnel de l'\'eclatement $u_p:X_p\rightarrow X$, donc est aussi injectif. Si on identifie $H^0(X_p,u_p^*(L^{\otimes np}))$ \`a $H^0(X,L^{\otimes np})$ (on peut faire \c{c}a car le sch\'ema $X$ est suppos\'e \^etre normal, cf. \cite[corollaire 4.3.12]{EGAIII_1}), l'image de l'homomorphisme compos\'e s'identifie \`a $\Image(\mathrm{Sym}^pV_n\rightarrow V_{np})$. En outre, comme le morphisme $j_p:X_p\rightarrow j_p(X_p)$ est birationnel, on a 
$\mathrm{vol}(L_p)=\mathrm{vol}(\mathcal O_{V_p}(n)|_{j_p(X_p)})$.  Cela montre que le volume du faisceau inversible $L_p$ est \'egale \`a celui du syst\`eme lin\'eaire gradu\'e
\[V^{[p]}_\sbullet:=\bigoplus_{n\geqslant 0}\Image(\mathrm{Sym}^n(V_p)\longrightarrow V_{np}).\]
Soit $E^{[p]}_{\sbullet}:=\bigoplus_{n\geqslant 0}\Image(\mathrm{Sym}^n(E_p)\rightarrow E_{np})$. C'est un syst\`eme gradu\'e en fibr\'es vectoriels de $L^{\otimes p}$. Pour tout entier $n$ assez positif, on a $E_n^{[p]}\subset(\pi u_p)_*(L_p^{\otimes n})$. On obtient alors 
\[\mathrm{vol}(V_\sbullet)\geqslant\frac{\mathrm{vol}(V^{[p]}_{\sbullet})}{p^{\dim(X)}}=\frac{\mathrm{vol}(L_p)}{p^{\dim(X)}}\geqslant\frac{\mathrm{vol}(E^{[p]}_{\sbullet})}{p^{\dim(X)}}.\]
D'apr\`es le th\'eor\`eme d'approximation de Fujita pour les syst\`emes lin\'eaires gradu\'es en fibr\'es ad\'eliques (qui sont plus g\'en\'eraux que les fibr\'es vectoriels, cf. \cite[th\'eor\`eme 2.9]{Boucksom_Chen}), on a 
\[\sup_{p\geqslant 1}\frac{\mathrm{vol}(E_\sbullet^{[p]})}{p^{\dim(X)}}=\mathrm{vol}(E_\sbullet).\]
On obtient donc $\vol(V_\sbullet)\geqslant\vol(E_\sbullet)$. Enfin, comme $V_n\subset H^0(C,E_n)$ pour tout $n\in\mathbb N$, on obtient $\vol(V_\sbullet)\leqslant\vol(E_\sbullet)$. La d\'emonstration est donc achev\'ee.
\end{proof}

\begin{defi}\label{Def:imagedirecte}
Soient $X$ un sch\'ema int\`egre projectif d\'efini sur $k$ et $L$ un faisceau inversible gros sur $X$. Soit $V_\sbullet$ un syst\`eme lin\'eaire gradu\'e de $L$ qui contient un diviseur ample. Pour tout entier $n\geqslant 0$, soit $\pi_*(V_n)$ le sous-$\mathcal O_C$-module de $\pi_*(L^{\otimes n})$ engendr\'e par $V_n$. On d\'esigne par $\pi_*(V_\sbullet)$ le syst\`eme gradu\'e en fibr\'es vectoriels $\bigoplus_{n\geqslant 0}\pi_*(V_n)$ et par $\mu_{\max}^\pi(V_\sbullet)$ la quantit\'e $\mu_{\max}^{\mathrm{asy}}(\pi_*(V_\sbullet))$, appel\'ee la \emph{pente maximale asymptotique} de $V_\sbullet$ relativement \`a $\pi$.
\end{defi}

\begin{rema}\label{Rem:comparaisonmu}
Soient $L$ et $M$ deux faisceau inversibles gros sur $X$, et $V_\sbullet$ et $W_\sbullet$ des syst\`emes lin\'eaires gradu\'es de $L$ et $M$ respectivement. On suppose que $V_\sbullet$ et $W_\sbullet$ contiennent des diviseurs amples. S'il existe une section non-nulle $s$ de $L^\vee\otimes M$ telle que 
\[\forall\,n\geqslant 1,\quad
\Image(\xymatrix{\relax V_n\ar[r]^-{s^n\cdot}&H^0(X,M^{\otimes n})})\subset W_n,\]
on dit que $V_\sbullet$ est \emph{contenu} dans $W_\sbullet$ (via la section $s$). Il s'av\`ere que la multiplication par $s^n$ d\'efinit aussi un homomorphisme injectif de $\pi_*(V_n)$ vers $\pi_*(W_n)$. On obtient donc $\mu_{\max}(V_n)\leqslant\mu_{\max}(W_n)$, qui implique la relation $\mu_{\max}^{\pi}(V_\sbullet)\leqslant\mu_{\max}^{\pi}(W_\sbullet)$. Similairement, on   a $\vol(V_\sbullet)\leqslant \vol(W_\sbullet)$.
\end{rema}

\section{Tour de fibrations sur courbes}\label{Sec:tourdefibration}

Soient $k$ un corps  et $X$ un sch\'ema projectif et int\`egre de dimension $d+1$ sur $\Spec k$, o\`u $d$ est un entier, $d\geqslant 0$. Par \emph{tour de fibrations sur courbes} de $X$, on entend toute donn\'ee
$(p_i:X_i\rightarrow C_i)_{i=0}^{d}$ o\`u
\begin{enumerate}[(a)]
\item lorsque $d=0$,  $C_0$  et $X_0$ sont tous les deux la normalisation du sch\'ema $X$ et $p_0:X_0\rightarrow C_0$ est le morphisme d'identit\'e, 
\item lorsque $d\geqslant 1$, $C_0$ est une courbe projective r\'eguli\`ere sur $\Spec k$, $X_0=X$ et $p_0$ est un $k$-morphisme projectif et plat\footnote{Ici la platitude est \'equivalente \`a la surjectivit\'e du morphisme, cf. \cite[proposition 4.3.9]{LiuQing}.} de $X$ vers $C_0$,
\item de fa\c{c}on r\'ecursive, pour tout $i\in\{1,\ldots,d-1\}$, $C_{i}$ est une courbe projective r\'eguli\`ere d\'efinie sur le corps $R(C_{i-1})$ des fonctions rationnelles sur $C_{i-1}$, $X_{i}$ est la fibre g\'en\'erique de $p_{i-1}$ et $p_{i}:X_{i}\rightarrow C_{i}$ est un morphisme projectif et plat de $R(C_{i-1})$-sch\'emas,
\item $C_{d}$ est la normalisation de la fibre g\'en\'erique de  $p_{d-1}$ et $p_d:C_{d}\rightarrow C_d$ est le morphisme d'identit\'e.
\end{enumerate}

Si $\Theta=(p_i:X_i\rightarrow C_i)_{i=0}^{d-1}$ est un tour de fibrations sur courbes du sch\'ema $X$, on d\'esigne par $g(\Theta)$ le vecteur $(g(C_0),\ldots,g(C_d))\in\mathbb N^{d+1}$, o\`u $g(C_i)$ est le genre de la courbe $C_i$. Le vecteur $g(\Theta)$ est appel\'e le \emph{genre} de $\Theta$. 

\begin{rema}
Soient $X$ un sch\'ema projectif et int\`egre de dimension $d+1$ sur $\Spec k$, o\`u $d\geqslant 2$. Si $(p_i:X_i\rightarrow C_i)_{i=0}^{d}$ est un tour de fibrations sur courbes du $k$-sch\'ema $X$, alors $(p_j:X_j\rightarrow C_j)_{j=i}^d$ est un tour de fibrations sur courbes du $R(C_{i-1})$-sch\'ema $X_i$.

Si le sch\'ema $X$ est normal, alors $C_d$ s'identifie \`a la fibre g\'en\'erique de $p_{d-1}$ (cela provient de la pr\'eservation de la cl\^oture int\'egrale par la localisation).
\end{rema}

On entend par \emph{modification birationnelle} d'un $k$-sch\'ema projectif et int\`egre $X$ tout morphisme $f:X'\rightarrow X$ d'un sch\'ema projectif et int\`egre $X'$ vers $X$ qui est birationnel (autrement dit, $f$ induit un isomorphisme entre les corps des fonctions rationnelles de $X$ et de $X'$). La proposition suivante montre que l'existence d'un tour de fibrations sur courbes de genre fix\'e est une propri\'et\'e invariante par toute modification birationnelle.

\begin{prop}
Soit $X$ un sch\'ema projectif et int\`egre de dimension $d+1$ sur $\Spec k$. On suppose que le sch\'ema $X$ admet un tour de fibrations sur courbes de genre $(g_0,\ldots,g_d)$. Alors, pour toute modification birationnelle $f:X'\rightarrow X$, le sch\'ema $X'$ admet aussi un tour de fibrations sur courbes de m\^eme genre.
\end{prop}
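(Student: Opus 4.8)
Le plan est de proc\'eder par r\'ecurrence sur l'entier $d$, c'est-\`a-dire sur la dimension de $X$, le cas de base \'etant $d=0$. Dans ce cas, $X$ est une courbe projective int\`egre et un tour de fibrations se r\'eduit \`a la donn\'ee de la normalisation de $X$. Comme une modification birationnelle $f:X'\rightarrow X$ induit un isomorphisme $k(X')\cong k(X)$ entre les corps des fonctions rationnelles, et comme la normalisation d'une courbe projective int\`egre est l'unique courbe projective r\'eguli\`ere admettant ce corps comme corps de fonctions, les normalisations de $X$ et de $X'$ sont isomorphes. Le genre est donc pr\'eserv\'e, ce qui \'etablit le cas de base.

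Pour l'\'etape de r\'ecurrence, je supposerais $d\geqslant 1$ et me donnerais un tour $(p_i:X_i\rightarrow C_i)_{i=0}^d$ de genre $(g_0,\ldots,g_d)$ sur $X$. L'observation cl\'e est que la courbe de base $C_0$ n'a pas besoin d'\^etre modifi\'ee~: je poserais $p_0':=p_0\circ f:X'\rightarrow C_0$, qui est un morphisme puisque $f$ et $p_0$ en sont. Ce morphisme est projectif (c'est un compos\'e de morphismes projectifs entre $k$-sch\'emas projectifs) et dominant (car $f$ est birationnel et $p_0$ surjectif), donc surjectif par propret\'e~; d'apr\`es la caract\'erisation de la platitude rappel\'ee plus haut (platitude \'equivalente \`a la surjectivit\'e pour un morphisme vers une courbe r\'eguli\`ere), $p_0'$ est plat. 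Ainsi $p_0':X'\rightarrow C_0$ fournit le premier \'etage d'un tour sur $X'$, avec la m\^eme courbe $C_0$, donc le m\^eme genre $g_0$.

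Je passerais ensuite \`a la fibre g\'en\'erique. Par construction $f$ est un morphisme au-dessus de $C_0$, de sorte que le changement de base au point g\'en\'erique $\eta_0$ de $C_0$ fournit un morphisme $f_1:X_1'\rightarrow X_1$ de $R(C_0)$-sch\'emas, o\`u $X_1'$ d\'esigne la fibre g\'en\'erique de $p_0'$. Le point \`a v\'erifier est que $f_1$ est une modification birationnelle~: le sch\'ema $X_1'$ est int\`egre (c'est la fibre g\'en\'erique d'un morphisme de source int\`egre, donc localement le spectre d'une localisation de l'anneau int\`egre des sections de $X'$) et projectif sur $R(C_0)$, son corps de fonctions est $k(X')=k(X)$, qui est aussi celui de $X_1$, et $f_1$ est dominant. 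Donc $f_1:X_1'\rightarrow X_1$ est bien une modification birationnelle du $R(C_0)$-sch\'ema projectif int\`egre $X_1$, lequel est de dimension $d$ et admet, d'apr\`es la remarque pr\'ec\'edente, le tour $(p_j:X_j\rightarrow C_j)_{j=1}^d$ de genre $(g_1,\ldots,g_d)$.

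Enfin, j'appliquerais l'hypoth\`ese de r\'ecurrence \`a $f_1:X_1'\rightarrow X_1$~: le sch\'ema $X_1'$ admet un tour de fibrations sur courbes de genre $(g_1,\ldots,g_d)$. En concat\'enant ce tour avec le morphisme $p_0':X'\rightarrow C_0$, dont la fibre g\'en\'erique est pr\'ecis\'ement $X_1'$, j'obtiendrais un tour de fibrations sur courbes de $X'$ de genre $(g_0,g_1,\ldots,g_d)$, ce qui ach\`everait la d\'emonstration. Le principal point \`a soigner est le maintien des propri\'et\'es requises (int\'egrit\'e des sch\'emas interm\'ediaires, platitude et projectivit\'e des morphismes) tout au long de la construction~; mais comme $p_0\circ f$ reste un morphisme et que la platitude vers une courbe se ram\`ene \`a la surjectivit\'e, aucune nouvelle modification birationnelle de $X'$ n'est n\'ecessaire, ce qui rend l'argument direct.
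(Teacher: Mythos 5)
Votre démonstration est correcte et suit essentiellement la même stratégie que celle du texte : composer $p_0$ avec $f$ pour obtenir un morphisme projectif et plat $X'\rightarrow C_0$ (la platitude résultant de la surjectivité vers une courbe régulière), constater que le morphisme induit entre les fibres génériques est une modification birationnelle, puis conclure par récurrence sur la dimension. Vous explicitez simplement les points que le texte laisse implicites (cas de base, intégrité de la fibre générique, identification des corps de fonctions), ce qui ne change rien à l'argument.
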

\begin{proof}
On suppose que $C_0$ est une courbe projective r\'eguli\`ere de genre $g_0$ sur $\Spec k$ et que $p_0:X\rightarrow C_0$ est un $k$-morphisme projectif et plat. Alors le morphisme compos\'e $fp_0$ est un morphisme projectif et plat de $X'$ vers $C_0$. De plus, le morphisme canonique de la fibre g\'en\'erique de $fp_0$ vers celle de $f$ est un $R(C_0)$-morphisme projectif et birationnel, o\`u $R(C_0)$ d\'esigne le corps des fonctions rationnelles sur $C_0$. Par r\'ecurrence sur la dimension de $X$, on obtient le r\'esultat.
\end{proof}

Soient $X$ un sch\'ema projectif et int\`egre sur $\Spec k$ et $\Theta$ un tour de fibrations sur courbes du sch\'ema $X$. La proposition pr\'ec\'edente non seulement montre que toute modification birationnelle admet un tour de fibrations sur courbes de m\^eme genre que celui de $\Theta$, sa d\'emonstration construit effectivement un tel tour de fibrations sur courbes pour toute modification birationnelle $f:X'\rightarrow X$, que l'on notera comme $f^*(\Theta)$. Si $\Theta$  est de la forme $(p_i:X_i\rightarrow C_i)_{i=0}^d$, alors le $i^{\text{\`eme}}$ morphisme dans $f^*(\Theta)$ est obtenu comme le compos\'e de $p_i$ avec une modification birationnelle de $X_i$. En outre, on peut v\'erifier que, si $f_1:X'\rightarrow X$ et $f_2:X''\rightarrow X'$ sont des modifications birationnelles successives, alors on a $(f_1f_2)^*\Theta=f_2^*(f_1^*\Theta)$. 

\begin{rema}
\'Etant donn\'e un sch\'ema projectif et int\`egre $X$ d\'efini sur $k$, le choix d'un tour $\Theta=(p_i:X_i\rightarrow C_i)_{i=0}^d$ de fibrations  sur courbes de $X$ d\'efinit une cha\^ine 
\[k\subset R(C_0)\subset\ldots\subset R(C_d)=R(X)\]
de sous-extension de $R(X)/k$, o\`u $R(X)$ est le corps des fonctions rationnelles sur $X$. Chaque extension cons\'ecutive dans la cha\^ine est transcendante de degr\'e  de transcendance $1$. R\'eciproquement, si on fixe une cha\^ine 
\[k=k_{-1}\subset k_0\subset\ldots \subset k_d=R(X)\]
de sous-extension de $R(X)/k$ de sorte que chaque extension $k_i/k_{i-1}$ est transcendante de degr\'e de transcendance $1$, alors il existe une modification birationnelle $X'$ de $X$ qui poss\`ede un tour de fibrations sur courbes $\Theta=(p_i:X_i'\rightarrow C_i)_{i=0}^d$ tel que $k_i=R(C_i)$ quel que soit $i\in\{0,\ldots,d\}$. En effet, on peut choisir $C_0$ comme la courbe projective r\'eguli\`ere d\'efinie sur $\Spec k$ telle que $R(C_0)=k_0$. L'inclusion de $k_0$ dans $R(X)$ d\'efinit un $k$-morphisme rationnel de $X$ vers $C_0$. Quitte \`a \'eclater le lieu o\`u ce morphisme rationnel n'est pas d\'efini, on obtient une modification birationnelle de $X$ muni d'un $k$-morphisme projectif et plat vers $C_0$. Par un proc\'ed\'e de r\'ecurrence, on peut construire une modification birationnelle de $X$ qui poss\`ede un tour de fibrations sur courbes v\'erifiant les propri\'et\'es comme ce que l'on a d\'ecrit plus haut.
\end{rema}

Soit $X$ un sch\'ema projectif et int\`egre de dimension $d+1$ sur $\Spec k$. On suppose que $X$ admet un tour de fibrations sur courbes $\Theta=(p_i:X_i\rightarrow C_i)_{i=0}^d$. Soient $L$ un faisceau inversible gros sur $X$ et $V_\sbullet$ un syst\`eme lin\'eaire gradu\'e de $L$, qui contient un diviseur ample. Si $d=0$, on d\'esigne par $\vol^{\Theta}(V_\sbullet)$ le volume de $V_\sbullet$ et par  $\mu^{\Theta}_{\max}(V_\sbullet)$ la pente maximale asymptotique de $V_\sbullet$ relativement au morphisme d'identit\'e de la normalisation de $X$, o\`u on consid\`ere $V_\sbullet$ comme un syst\`eme lin\'eaire gradu\'e du tire en arri\`ere du faisceau inversible $L$ sur la normalisation de $X$.

Si $d\geqslant 1$, de fa\c{c}on r\'ecursive, on d\'esigne par $\vol^{\Theta}(V_\sbullet)$ le vecteur \[\big(\vol(V_\sbullet),\vol^{\Theta'}(p_{0*}(V_\sbullet)_{\eta_0})\big),\]
o\`u $\Theta'=(p_i:X_i\rightarrow C_i)_{i=1}^d$ (qui est un tour de fibrations sur courbes de $X_1$), $\eta_0$ est le point g\'en\'erique de la courbe $C_0$ et $p_{0*}(V_\sbullet)_{\eta_0}$ est la fibre g\'en\'erique de $p_{0*}(V_\sbullet)$, qui est un syst\`eme lin\'eaire gradu\'e du tire en arri\`ere de $L$ sur $X_1$ contenant un diviseur ample. De fa\c{c}on similaire, on d\'esigne par $\mu_{\max}^{\Theta}(V_\sbullet)$ le vecteur\footnote{cf. la d\'efinition \ref{Def:imagedirecte} pour la construction de $\mu_{\max}^{p_0}(V_\sbullet)$.}
\[\big(\mu_{\max}^{p_0}(V_\sbullet),\mu_{\max}^{\Theta'}(p_{0*}(V_\sbullet)_{\eta_0})\big).\]

Si $f:X'\rightarrow X$ est une modification birationnelle du sch\'ema $X$, alors on a 
\[\vol^{f^*\Theta}(V_\sbullet)=\vol^{\Theta}(V_\sbullet)\quad\text{et}\quad
\mu_{\max}^{f^*\Theta}(V_\sbullet)=\mu_{\max}^{\Theta}(V_\sbullet),\] o\`u on consid\`ere $V_\sbullet$ comme un syst\`eme lin\'eaire gradu\'e de $f^*(L)$. 


\begin{rema}
Soient $X$ un sch\'ema projectif et int\`egre de dimension $d+1$ sur $\Spec k$ et $\Theta=(p_i:X_i\rightarrow C_i)_{i=0}^d$ un tour de fibration sur courbes de $X$. Soient $L$ est un faisceau inversible gros sur $X$ et $V_\sbullet$ un syst\`eme lin\'eaire gradu\'e de $L$. Soient $(v_0,\ldots,v_d)$ le vecteur $\vol^{\Theta}(V_\sbullet)$ et $(\mu_0,\ldots,\mu_d)$ le vecteur $\mu_{\max}^{\Theta}(V_\sbullet)$. Alors on a
\[\forall\,i\in\{0,\ldots,d\},\quad v_i\leqslant\vol(L|_{X_i})\quad\text{et}\quad\mu_i\leqslant\mu^{p_i}_{\max}(L|_{X_i})\]
En outre, si $V_\sbullet$ est le syst\`eme lin\'eaire gradu\'e total $\bigoplus_{n\geqslant 0}H^0(X,L^{\otimes n})$, alors on a $\mu_i=\mu_{\max}^{p_i}(L|_{X_i})$ pour tout $i\in\{0,\ldots,d\}$. Cependant, en g\'en\'eral l'\'egalit\'e $v_i=\vol(L|_{X_i})$ n'est pas vraie lorsque $i\geqslant 1$. En effet, le syst\`eme gradu\'e en fibr\'es vectoriels $p_{0*}(V_\sbullet)$ ne tient compte que la partie positive du syst\`eme gradu\'e total $\bigoplus_{n\geqslant 0}p_{0*}(L^{\otimes n})$ (qui permet cependant de retrouver le volume de $V_\sbullet$). Les \'egalit\'es $v_i=\vol(L|_{X_i})$ ($i\in\{0,\ldots,d\}$) sont v\'erifi\'ees notamment lorsque $L$ est ample.
\end{rema} 

\section{Estimation explicite la fonction de Hilbert-Samuel}\label{Sec:HS}

Le but de ce paragraphe est de d\'emontrer le th\'eor\`eme \ref{Thm:HMgeometryique}. On fixe un corps commutatif $k$ de caract\'eristique z\'ero. Soit $X$ un sch\'ema projectif et int\`egre sur $\Spec k$. On suppose donn\'e un tour de fibrations sur courbes $\Theta=(p_i:X_i\rightarrow C_i)_{i=0}^d$ de $X$. Pour tout syst\`eme lin\'eaire gradu\'e $V_\sbullet$ d'un faisceau inversible gros $L$ sur $X$, on introduit un invariant birationnel $\varepsilon^{\Theta}(V_\sbullet)$, construit dans la suite. Si $d=0$, alors on d\'efinit
\[\varepsilon^{\Theta}(V_\sbullet):=\max(g(\Theta)-1,1).\]
Lorsque $d\geqslant 1$, on d\'esigne par $W_\sbullet$ la fibre g\'en\'erique de $p_{0*}(V_\sbullet)$, qui est un syst\`eme lin\'eaire gradu\'e de $L|_{X_1}$ contenant un diviseur ample. Soit en outre $\Theta':=(p_i:X_i\rightarrow C_i)_{i=1}^d$, qui est un tour de fibrations sur courbes de $X_1$. Alors l'invariant $\varepsilon^{\Theta}(V_\sbullet)$ est d\'efinie de fa\c{c}on r\'ecursive comme 
\begin{equation}\label{Equ:termeerreur}\varepsilon^{\Theta}(V_\sbullet)=\mu_0\varepsilon^{\Theta'}(W_\sbullet)+\Big(\frac{\vol(W_\sbullet)}{d!}+\varepsilon^{\Theta'}(W_\sbullet)\Big)\max(g_0-1,1),\end{equation}
o\`u $\mu_0$ et $g_0$ sont respectivement les premi\`eres coordonn\'ees des vecteurs $\mu_{\max}^{\Theta}(V_\sbullet)$ et $g(\Theta)$. On voit aussit\^ot de la d\'efinition que, si $V_\sbullet$ est contenu dans un autre syst\`eme lin\'eaire gradu\'e $V'_\sbullet$, alors on a  (cf. la remarque \ref{Rem:comparaisonmu})
\begin{equation}\label{Equ:compaeps}\varepsilon^{\Theta}(V_\sbullet)\leqslant\varepsilon^{\Theta}(V_\sbullet').\end{equation}
\begin{rema}
Pour tout entier $p\geqslant 1$, soit $V^{(p)}_{\sbullet}$ le syst\`eme lin\'eaire gradu\'e $\bigoplus_{n\geqslant 0}V_{np}$ de $L^{\otimes p}$. La fibre g\'en\'erique de $p_{0*}(V_\sbullet^{(p)})$ s'identifie \`a $W^{(p)}_\sbullet$. On a $\vol(W^{(p)}_\sbullet)=p^d\vol(W^{(p)})$. En outre, on a $\mu_{\max}^{\Theta}(V_\sbullet^{(p)})=p\mu_{\max}^{\Theta}(V_\sbullet)$. On obtient alors de la formule r\'ecursive \eqref{Equ:termeerreur} que
\begin{equation}
\varepsilon^\Theta(V_\sbullet^{(p)})\leqslant p^d\varepsilon^{\Theta}(V_\sbullet)
\end{equation}
\end{rema}
 
\begin{theo}\label{Thm:majorationdeHS}
Soit $X$ un sch\'ema projectif et int\`egre sur $\Spec k$ muni d'un tour de fibrations sur courbes $\Theta=(p_i:X_i\rightarrow C_i)_{i=0}^d$, et $L$ un faisceau inversible sur $X$, o\`u $k$ est un corps de caract\'eristique z\'ero. 
Si $V_\sbullet=\bigoplus_{n\geqslant 0}V_n$ est un syst\`eme lin\'eaire gradu\'e de $L$, qui contient un diviseur ample, alors on a 
\begin{equation}\label{Equ:majoration}\rang_k(V_1)\leqslant\mathrm{vol}(V_\sbullet)+\varepsilon^{\Theta}(V_\sbullet). \end{equation}
\end{theo}
\begin{proof}
Les invariants figurant \`a droite de l'in\'egalit\'e \eqref{Equ:majoration} sont des invariants birationnels. Quitte \`a passer \`a la normalisation de $X$, on peut supposer que $X$ est un sch\'ema normal. On raisonne par r\'ecurrence sur $d$. Dans le cas o\`u $d=0$, le sch\'ema $X$ est une courbe r\'eguli\`ere de genre $g(\Theta)$. Pour tout entier $n\geqslant 0$, soit $L_n$ le sous-$\mathcal O_X$-module de $L^{\otimes n}$ engendr\'e par $V_n$. D'apr\`es le th\'eor\`eme \ref{Thm:passageauxfibres}, $L_\sbullet=\bigoplus_{n\geqslant0}L_n$ est un syst\`eme gradu\'e en fibr\'es vectoriels de $L$, qui contient un diviseur ample et v\'erifie $\vol(L_\sbullet)=\vol(V_\sbullet)$. En outre, l'homomorphisme $L_n\otimes L_m\rightarrow L_{n+m}$ est non-nul d\`es que $L_n$ et $L_m$ sont tous non-nuls. On obtient alors (cf. la remarque \ref{Rem:systemegradue})
\[\vol(V_\sbullet)=\vol(L_\sbullet)=\lim_{n\rightarrow+\infty}\frac{\deg(L_n)}{n}=\sup_{n\geqslant 1}\frac{\deg(L_n)}{n}\] 
En particulier, si $V_1$ est non-nul, alors 
\[\rang_k(V_1)\leqslant h^0(L_1)\leqslant\deg(L_1)+\max(g(X)-1,1)\leqslant\vol(V_\sbullet)+\max(g(X)-1,1).\]
Cette in\'egalit\'e est aussi vraie lorsque $\rang_k(V_1)=\{0\}$. Le th\'eor\`eme est donc d\'emontr\'e pour le cas o\`u $d=0$.

Traitons maintenant le cas g\'en\'eral. Soient 
\[(v_0,\ldots,v_d)=\vol^\Theta(V_\sbullet),\;(\mu_0,\ldots,\mu_d)=\mu_{\max}^{\Theta}(V_\sbullet)\;\text{ et }\;(g_0,\ldots,g_d)=g(\Theta).\] Soient $E_\sbullet:=p_{0*}(V_\sbullet)$ et $W_\sbullet$ la fibre g\'en\'erique de $E_\sbullet$.   D'apr\`es le th\'eor\`eme \ref{Thm:passageauxfibres}, $E_\sbullet=\bigoplus_{n\geqslant 0}E_n$ est un syst\`eme gradu\'e en fibr\'es vectoriels de $L$ qui contient un diviseur ample et v\'erifie la relation $\vol(E_\sbullet)=\vol(V_\sbullet)$. Soit $\eta$ la fibre g\'en\'erique de $C_0$. Pour tout nombre r\'eel $t$ et tout entier $n\in\mathbb N$, soit $W^{t}_n:=(\mathcal F^{nt}E_n)_\eta$, o\`u $\mathcal F$ est la $\mathbb R$-filtration de Harder-Narasimhan sur $E_n$. D'apr\`es le corollaire \ref{Cor:filtrationdeHNenalg}, $W^t_\sbullet=\bigoplus_{n\geqslant 0}W_n^t$ est un syst\`eme lin\'eaire gradu\'e de $L_\eta$. En outre, $W_\sbullet^t$ contient un diviseur ample lorsque $t<\mu_0$, et devient trivial lorsque $t>\mu_0$ (cf. \cite[lemme 1.6]{Boucksom_Chen}), et on a (cf. \cite[corollaire 1.13]{Boucksom_Chen}) 
\begin{equation}\label{Equ:volcommeinte}
\vol(E_\sbullet)=(d+1)\int_0^{\mu_0}\vol(W_\sbullet^t)\,\mathrm{d}t.\end{equation}
Si $E_1$ est un fibr\'e vectoriel non-nul, d'apr\`es la formule \eqref{Equ:degplus} on obtient\footnote{D'apr\`es la remarque \ref{Rem:systemegradue}, on a  $\mu_{\max}(E_1)\leqslant\mu_{\max}^{\mathrm{asy}}(E_\sbullet)=\mu_0$. Donc $W_1^t=\{0\}$ lorsuqe $t>\mu_0$.}
\[\deg_+(E_1)=\int_0^{\mu_0}\rang(W_1^t)\,\mathrm{d}t.\]
On applique l'hypoth\`ese de r\'ecurrence \`a $W_\sbullet^t$ pour chaque $t$ et obtient
\begin{equation}\label{Equ:majoratiodegplus}\deg_+(E_1)\leqslant\int_0^{\mu_0}\Big(\frac{\vol(W_1^t)}{d!}+\varepsilon^{\Theta'}(W_\sbullet^t)\Big)\,\mathrm{d}t\leqslant\frac{\vol(E_\sbullet)}{(d+1)!}+\mu_0\varepsilon^{\Theta'}(W_\sbullet),\end{equation}
o\`u $\Theta'=(p_i:X_i\rightarrow C_i)_{i=1}^d$. Dans la deuxi\`eme in\'egalit\'e, on a utilis\'e les relations \eqref{Equ:volcommeinte} et \eqref{Equ:compaeps}.
Enfin, le th\'eor\`eme \ref{Thm:h0etdegplus}  montre que
\[\begin{split}h^0(E_1)&\leqslant\deg_+(E_1)+\rang(W_1)\max(g_0-1,1)\\
&\leqslant\frac{\vol(E_\sbullet)}{(d+1)!}+\mu_0\varepsilon^{\Theta'}(W_\sbullet)+\Big(\frac{v_1}{d!}+\varepsilon^{\Theta'}(W_\sbullet^{\Theta'})\Big)\max(g_0-1,1),
\end{split}\]
o\`u on a appliqu\'e l'hypoth\`ese de r\'ecurrence \`a $W_\sbullet$  dans la deuxi\`eme in\'egalit\'e.
Comme $\rang(V_1)\leqslant h^0(E_1)$ et comme $\vol(E_\sbullet)=\vol(V_\sbullet)$, on obtient
\[\rang(V_1)\leqslant\frac{\vol(V_\sbullet)}{(d+1)!}+\mu_0\varepsilon^{\Theta'}(W_\sbullet)+\Big(\frac{v_1}{d!}+\varepsilon^{\Theta'}(W_\sbullet)\Big)\max(g_0-1,1).
\]
La d\'emonstration est donc achev\'ee, compte tenu de la formule r\'ecursive \eqref{Equ:termeerreur} d\'efinissant $\varepsilon^{\Theta}(V_\sbullet)$.
\end{proof}

Dans la d\'emonstration de l'in\'egalit\'e \eqref{Equ:majoratiodegplus}, on a seulement utilis\'e le fait que l'alg\`ebre gradu\'ee $W_\sbullet$ est filtr\'ee par la $\mathbb R$-filtration de Harder-Narasimhan. Ainsi le th\'eor\`eme se g\'en\'eralise naturellement dans le cadre de syst\`eme lin\'eaire gradu\'e filtr\'e et conduit au corollaire suivant qui sera utile plus loin dans l'\'etude des syst\`emes lin\'eaires gradu\'es arithm\'etiques.

\begin{coro}\label{Cor:majHS}
Soit $X$ un sch\'ema projectif et int\`egre sur $\Spec k$ muni d'un tour de fibrations sur courbes $\Theta$. Soient $L$ un faisceau inversible gros sur $X$ et $V_\sbullet$ un syst\`eme lin\'eaire gradu\'e de $L$ qui contient un diviseur ample. On suppose que chaque espace vectoriel $V_n$ est muni d'une $\mathbb R$-filtration $\mathcal F$ de sorte que
\begin{equation}\label{Equ:sousmult}(\mathcal F^aV_n)\cdot(\mathcal F^bV_m)\subset \mathcal F^{a+b}V_{n+m}\end{equation}
pour tout $(a,b)\in\mathbb R^2$ et tout $(n,m)\in\mathbb N^2$. Soit en outre 
\[\lambda_{\max}^{\mathrm{asy}}(V_\sbullet)=\sup_{n\geqslant 1}\frac{\sup\{t\,|\,\mathcal F^tV_n\neq\{0\}\}}{n}.\]
Alors on a 
\begin{equation}
\int_0^{+\infty}\rang_k(\mathcal F^tV_1)\,\mathrm{d}t\leqslant\int_0^{+\infty}\frac{\vol(V^t_\sbullet)}{d!}\,\mathrm{d}t+\lambda_{\max}^{\mathrm{asy}}(V_\sbullet)\varepsilon^{\Theta}(V_\sbullet),
\end{equation}
o\`u $d=\dim(X)$ et $V_\sbullet^t=\bigoplus_{n\geqslant 0}\mathcal F^{nt}V_n$. 
\end{coro}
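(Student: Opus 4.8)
The plan is to apply Theorem~\ref{Thm:majorationdeHS} \emph{slicewise} to the family $(V_\sbullet^t)_{t\geqslant 0}$ and then integrate in $t$, mimicking exactly the passage leading to \eqref{Equ:majoratiodegplus} but with the Harder--Narasimhan filtration replaced by the abstract filtration $\mathcal F$. First I would record two structural facts about the slices. The submultiplicativity hypothesis \eqref{Equ:sousmult} ensures that for each fixed $t\in\mathbb R$ the piece $V_\sbullet^t=\bigoplus_{n\geqslant 0}\mathcal F^{nt}V_n$ is a graded sub-algebra of $V_\sbullet$, hence a graded linear system of $L$ on $X$. The range of relevant $t$ is then controlled by $\lambda_{\max}^{\mathrm{asy}}(V_\sbullet)$: for $t>\lambda_{\max}^{\mathrm{asy}}(V_\sbullet)$ one has $nt>\lambda_{\max}^{\mathrm{asy}}(V_\sbullet)\,n\geqslant\sup\{s\,|\,\mathcal F^sV_n\neq\{0\}\}$ for every $n\geqslant 1$, so $\mathcal F^{nt}V_n=\{0\}$ and both $\mathcal F^tV_1$ and $V_\sbullet^t$ are trivial; whereas for $0\leqslant t<\lambda_{\max}^{\mathrm{asy}}(V_\sbullet)$ the system $V_\sbullet^t$ still contains an ample divisor, by the same argument as in \cite[lemme 1.6]{Boucksom_Chen} already invoked in the proof of Theorem~\ref{Thm:majorationdeHS}.

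Next, for each $t\in{}]0,\lambda_{\max}^{\mathrm{asy}}(V_\sbullet)[$ I would apply Theorem~\ref{Thm:majorationdeHS} to the graded linear system $V_\sbullet^t$ on $X$ (of dimension $d$), equipped with the same tour $\Theta$, to obtain
\[\rang_k(\mathcal F^tV_1)=\rang_k(V_1^t)\leqslant\frac{\vol(V_\sbullet^t)}{d!}+\varepsilon^{\Theta}(V_\sbullet^t).\]
The error term is then made uniform in $t$ by monotonicity: since $\mathcal F^{nt}V_n\subset V_n$ for every $n$, the unit section of $\mathcal O_X$ (viewed as a section of $L^\vee\otimes L$) realizes $V_\sbullet^t$ as contained in $V_\sbullet$ in the sense of Remark~\ref{Rem:comparaisonmu}, so \eqref{Equ:compaeps} gives $\varepsilon^{\Theta}(V_\sbullet^t)\leqslant\varepsilon^{\Theta}(V_\sbullet)$ for all such $t$.

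Finally I would integrate the displayed pointwise inequality over $t\in[0,+\infty[$. Both $t\mapsto\rang_k(\mathcal F^tV_1)$ and $t\mapsto\vol(V_\sbullet^t)$ are non-increasing, hence Riemann-integrable on the bounded interval $[0,\lambda_{\max}^{\mathrm{asy}}(V_\sbullet)]$, and by the boundary fact above both vanish beyond $\lambda_{\max}^{\mathrm{asy}}(V_\sbullet)$; thus the left-hand integral equals its restriction to that interval and the volume integral extends freely to $[0,+\infty[$. Replacing $\varepsilon^{\Theta}(V_\sbullet^t)$ by the constant $\varepsilon^{\Theta}(V_\sbullet)$ and integrating it over an interval of length $\lambda_{\max}^{\mathrm{asy}}(V_\sbullet)$ produces precisely the term $\lambda_{\max}^{\mathrm{asy}}(V_\sbullet)\,\varepsilon^{\Theta}(V_\sbullet)$, which yields the claim. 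The only genuinely delicate step is verifying that $V_\sbullet^t$ contains an ample divisor for $t<\lambda_{\max}^{\mathrm{asy}}(V_\sbullet)$, since this is what makes Theorem~\ref{Thm:majorationdeHS} applicable slicewise; everything else is a formal integration of a pointwise estimate combined with the monotonicity of $\varepsilon^{\Theta}$.
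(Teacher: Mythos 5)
Your proposal is correct and follows essentially the same route as the paper's own proof: slicing by the filtration, invoking \cite[lemme 1.6]{Boucksom_Chen} to see that $V_\sbullet^t$ contains an ample divisor for $t<\lambda_{\max}^{\mathrm{asy}}(V_\sbullet)$, applying the th\'eor\`eme \ref{Thm:majorationdeHS} to each slice, bounding $\varepsilon^{\Theta}(V_\sbullet^t)\leqslant\varepsilon^{\Theta}(V_\sbullet)$ via \eqref{Equ:compaeps}, and integrating over $[0,\lambda_{\max}^{\mathrm{asy}}(V_\sbullet)[$. The only differences are cosmetic: you justify the vanishing of the slices beyond $\lambda_{\max}^{\mathrm{asy}}(V_\sbullet)$ directly from the supremum definition rather than via sur-additivity, and you make explicit the integrability remarks the paper leaves implicit.
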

\begin{proof}
Pour tout entier $n\geqslant 1$, on note \[\lambda_{\max}(V_n)=\sup\{t\,|\,\mathcal F^tV_n\neq\{0\}\}.\]
La condition \eqref{Equ:sousmult} montre que la suite $(\lambda_{\max}(V_n))_{n\geqslant 1}$ est sur-additive, d'o\`u $\lambda_{\max}(V_n)\leqslant n\lambda_{\max}^{\mathrm{asy}}(V_\sbullet)$. Par cons\'equent, pour tout entier $n\geqslant 1$ et tout $t>\lambda_{\max}^{\mathrm{asy}}(V_\sbullet)$, on a $V_n^t=\mathcal F^{nt}V_n=\{0\}$. En outre, d'apr\`es \cite[lemme 1.6]{Boucksom_Chen}, pour tout $t<\lambda_{\max}^{\mathrm{asy}}(V_\sbullet)$, le syst\`eme lin\'eaire gradu\'e $V_\sbullet^t$ contient un diviseur ample. On applique le th\'eor\`eme \ref{Thm:majorationdeHS} \`a $V_\sbullet^t$ et obtient 
\[\rang_k(\mathcal F^tV_1)\leqslant\frac{\vol(V_\sbullet^t)}{d!}\,\mathrm{d}t+\varepsilon^{\Theta}(V_\sbullet^t)\leqslant \frac{\vol(V_\sbullet^t)}{d!}\,\mathrm{d}t+\varepsilon^{\Theta}(V_\sbullet),\quad t<\lambda_{\max}^{\mathrm{asy}}(V_\cdot).\]
L'int\'egration de cette in\'egalit\'e pour $t\in [0,\lambda_{\max}^{\mathrm{asy}}(V_\sbullet)[$ conduit \`a l'in\'egalit\'e souhait\'ee.
\end{proof}

\section{Fibr\'es vectoriels ad\'eliques}

Dans ce paragraphe, l'expression $K$ d\'esigne un corps de nombres. Soit $\mathcal O_K$ la fermeture int\'egrale de $\mathbb Z$ dans $K$. On entend par \emph{place} de $K$ toute classe d'\'equivalence de valeurs absolues non-triviales sur $K$, o\`u deux valeurs absolues sont dites \'equivalentes si elles d\'efinissent la m\^eme topologie sur $K$. On d\'esigne par $M_K$ l'ensemble de toutes les places de $K$.  Pour toute place $v$ de $K$, on d\'esigne par $|.|_v$ une valeur absolue dans la place $v$ qui prolonge soit la valeur absolue usuelle sur $\mathbb Q$, soit l'une des valeurs absolues $p$-adiques.  On d\'esigne par $K_v$ le compl\'et\'e de $K$ par rapport \`a la valeur absolue $|.|_v$. Il s'av\`ere que $|.|_v$ s'\'etend de fa\c{c}on unique sur la cl\^oture alg\'ebrique $\overline K_v$. On d\'esigne par $\mathbb C_v$ le compl\'et\'e de $\overline K_v$, qui est \`a la fois alg\'ebriquement clos et complet. On rappelle que la famille $(|.|_v)_{v\in M_K}$ de valeurs absolues v\'erifie la formule du produit
\begin{equation}\label{Equ:formuleduproduit}
\forall\, a\in K^{\times},\; \sum_{v\in M_K}\frac{[K_v:\mathbb Q_{v}]}{[K:\mathbb Q]}\ln |a|_v=0.
\end{equation}  
Il s'av\`ere que l'ensemble des places de $K$ qui ne prolongent pas $\infty\in M_{\mathbb Q}$ (repr\'esentant la valeur absolue usuelle de $\mathbb Q$) correspond biunivoquement \`a l'ensemble des points ferm\'es de $\Spec\mathcal O_K$.

Soit $E$ un espace vectoriel de rang fini sur $K$. Si $\boldsymbol{e}=(e_1,\ldots,e_n)$ est une base de $E$, alors elle d\'efinit pour chaque place $v\in M_K$ une norme $\|.\|_{\boldsymbol{e},v}$ sur $E\otimes_K\mathbb C_v$ telle que
\[\|\lambda_1e_1+\cdots+\lambda_ne_n\|_{\boldsymbol{e},v}=\begin{cases}
\max\{|\lambda_1|_v,\ldots,|\lambda_n|_v\},&\text{si $v$ est non-archim\'edienne},\\
(|\lambda_1|_v^2+\cdots+|\lambda_n|_v^2)^{1/2},&\text{si $v$ est archim\'edienne}.
\end{cases}\]
Cette norme est invariante sous l'action du groupe de Galois $\mathrm{Gal}(\mathbb C_v/K_v)$.

\begin{defi}
On appelle \emph{fibr\'e vectoriel ad\'elique} (cf. \cite{Gaudron08}) sur $\Spec K$ toute donn\'ee $\overline E=(E,(\|.\|_{v}))$ d'un espace vectoriel de rang fini $E$ sur $K$ et une famille de normes, o\`u $\|.\|_v$ est une norme sur $E\otimes_K\mathbb C_v$, qui est invariante sous l'action du groupe de Galois $\mathrm{Gal}(\mathbb C_v/K_v)$, et est ultram\'etrique lorsque $v$ est une place non-archim\'edienne. On demande en plus l'existence d'une base $\boldsymbol{e}$ de $E$ telle que $\|.\|_v=\|.\|_{\boldsymbol{e},v}$ pour toute sauf un nombre fini de places $v\in M_K$. Si $E$ est de rang $1$ sur $K$, on dit que $\overline E$ est un \emph{fibr\'e inversible ad\'elique}. Si, pour toute place archim\'edienne $v$, la norme $\|.\|_v$ est hermitienne, on dit que $\overline E$ est \emph{hermitien}.
\end{defi}

Soit $\overline L$ un fibr\'e inversible ad\'elique sur $\Spec K$. On d\'efinit le \emph{degr\'e d'Arakelov} de $\overline L$ comme
\begin{equation}
\hdeg(\overline L):=-\sum_{v\in M_K}[K_v:\mathbb Q_{v}]\ln\|s\|_v,
\end{equation}
o\`u $s$ est un \'el\'ement non-nul de $L$. D'apr\`es la formule du produit \eqref{Equ:formuleduproduit}, cette d\'efinition ne d\'epend pas du choix de $s$. On introduit aussi la version normalis\'ee du degr\'e d'Arakelov comme
\begin{equation}
\ndeg(\overline L):=\frac{\deg(L)}{[K:\mathbb Q]}.
\end{equation}

La notion de \emph{fibr\'e vectoriel norm\'e} (ou \emph{hermitien}) sur $\Spec\mathcal O_K$ est aussi largement utilis\'ee dans la litt\'erature. Rappelons qu'un fibr\'e vectoriel norm\'e sur $\Spec\mathcal O_K$ est la donn\'ee $\overline{\mathcal E}$ d'un $\mathcal O_K$-module projectif et de type fini muni d'une famille de norme $(\|.\|_v)_{v\in M_{K,\infty}}$ index\'ee par l'ensemble des places archim\'ediennes de $K$, o\`u chaque $\|.\|_{v}$ est une norme sur $\mathcal E\otimes_{\mathcal O_K}\mathbb C_v$, invariante sous l'action du groupe de Galois $\mathrm{Gal}(\mathbb C/K_v)$. Le fibr\'e vectoriel norm\'e $\overline{\mathcal E}$ est dit hermitien si chaque norme $\|.\|_v$ est hermitienne. \'Etant donn\'e un fibr\'e vectoriel norm\'e $\overline{\mathcal E}$ sur $\Spec\mathcal O_K$, on obtient naturellement une structure de fibr\'e vectoriel ad\'elique pour $E=\mathcal E_K$ (qui est hermitienne lorsque $\overline{\mathcal E}$ est hermitien), o\`u la norme en une place non-archim\'edienne $\mathfrak p$ est induite par la structure de $\mathcal O_K$-module de $\mathcal E$~: la boule unit\'e ferm\'ee de $\|.\|_{\mathfrak p}$ est $\mathcal E\otimes_{\mathcal O_K}\mathcal O_{\mathfrak p}$, o\`u $\mathcal O_{\mathfrak p}$ est l'anneau de valuation de $\mathbb C_{\mathfrak p}$. R\'eciproquement, un fibr\'e vectoriel ad\'elique sur $\Spec K$ provient n\'ecessairement d'un fibr\'e vectoriel norm\'e sur $\Spec\mathcal O_K$ pourvu que toutes ses normes index\'ees par les places non-archim\'ediennes sont pures\footnote{Soit $\overline E=(E,(\|.\|_v)_{v\in M_K})$ un fibr\'e vectoriel ad\'elique sur $\Spec K$. Pour toute place non-archim\'edienne $\mathfrak p$, la norme $\|.\|_{\mathfrak p}$ est dite pure si l'image de sa restriction \`a $E$ s'identifie \`a l'image de la valeur absolue $|.|_{\mathfrak p}:K\rightarrow\mathbb R$.}. On renvoie les lecteurs dans \cite[proposition 3.10]{Gaudron09} pour une d\'emonstration.

\subsection{Filtration de Harder-Narasimhan}
Soit $\overline E=(E,(\|.\|_v)_{v\in M_K})$ un fibr\'e vectoriel ad\'elique sur $\Spec K$ qui est hermitien. Pour toute place $v\in M_K$, la norme $\|.\|_v$ sur $E_{\mathbb C_v}$ induit une norme sur $\det(E_{\mathbb C_v})$ qui est une ultranorme (resp. une norme hermitienne) lorsque $v$ est une place ultram\'etrique (resp. une place archim\'edienne), et telle que 
\[\forall\,(s_1,\ldots,s_r)\in E^r,\quad\|s_1\wedge\cdots\wedge s_r\|_v=\prod_{i=1}^r\|s_i\|_v,\]
o\`u $r$ est le rang de $E$. Ainsi $\det(\overline E):=(\det E,(\|.\|_v)_{v\in M_K})$ devient un fibr\'e inversible ad\'elique sur $\Spec K$. On d\'efinit le \emph{degr\'e d'Arakelov normalis\'e} de $\overline E$ comme celui de $\det(\overline E)$, not\'e comme $\ndeg(\overline E)$. Si de plus $E$ est non-nul, on d\'esigne par $\hmu(E)$ le quotient $\hdeg_n(\overline E)/\rang(E)$, appel\'e la \emph{pente} de $\overline E$. Le formalisme de la th\'eorie de Harder-Narasimhan est encore valable dans le cadre des fibr\'es vectoriels ad\'eliques hermitiens. En particulier, il existe un sous-espace vectoriel $E_{\des}$ de $E$ tel que 
\[\hmu(\overline E_{\des})=\hmu_{\max}(\overline E):=\sup_{0\neq F\subset E}\hmu(\overline F)\]
et que $E_{\des}$ contient tous les sous-espaces vectoriels non-nuls $F$ de $E$ tel que $\overline F$ admet $\hmu_{\max}(\overline E)$ comme pente. On renvoie les lecteurs dans \cite[\S5.1]{Gaudron08} pour les d\'etails. Similairement au cas de fibr\'es vectoriels sur une courbe, on peut construire de fa\c{c}on r\'ecursive un drapeau 
\[0=E_0\subsetneq E_1\subsetneq\ldots\subsetneq E_n=E\]
de sous-espaces vectoriels de $E$ de sorte que $E_i/E_{i-1}=(E/E_{i-1})_{\des}$ pour tout $i\in\{1,\ldots,n\}$, o\`u on a consid\'er\'e des normes quotients sur $E/E_{i-1}$. Si on d\'esigne par $\alpha_i$ la pente de $\overline E_i/\overline E_{i-1}$, alors on a 
\[\alpha_1>\alpha_2>\ldots>\alpha_n.\]
La derni\`ere pente $\alpha_n$ est appel\'ee la \emph{pente minimale} de $\overline E$, not\'ee comme $\hmu_{\min}(\overline E)$. On d\'esigne par $P_{\overline E}$ la fonction concave et affine par morceau d\'efinie sur l'intervalle $[0,\rang(E)]$, qui est affine sur chaque intervalle $[\rang(E_{i-1}),\rang(E_i)]$ et de pente $\alpha_i$. Cette fonction est appel\'ee le \emph{polygone de Harder-Narasimhan} de $\overline E$.
On d\'esigne par $\mathcal F_{\mathrm{HN}}$ la $\mathbb R$-filtration d\'ecroissante sur $E$ telle que
\begin{equation}
\mathcal F_{\mathrm{HN}}^t(E)=\sum_{\begin{subarray}{c}1\leqslant i\leqslant n\\
t\leqslant\alpha_i\end{subarray}}E_i.
\end{equation}
Cette filtration est appel\'ee la \emph{filtration de Harder-Narasimhan} de $\overline E$.

\begin{defi}\label{Def:degplus}
Soit $\overline E$ un fibr\'e vectoriel ad\'elique hermitien sur $\Spec K$. Soit $r$ le rang de $E$. Pour tout $i\in\{1,\ldots,r\}$, on d\'esigne par $\hmu_i(\overline E)$ la pente du polygone de Harder-Narasimhan $P_{\overline E}$ sur l'intervalle $[i-1,i]$, appel\'ee la \emph{$i^{\text{\`eme}}$ pente} de $\overline E$. En outre, on d\'esigne par $\npdeg(\overline E)$ la valeur maximale du polygone $P_{\overline E}$ sur l'intervalle $[0,r]$.
\end{defi}

\begin{prop}
Avec les notation de la d\'efinition pr\'ec\'edente, on a 
\begin{equation}\label{Equ:deg+}\npdeg(\overline E)=\sum_{\begin{subarray}{c}
1\leqslant i\leqslant r\\
\widehat{\mu}_i(\overline E)\geqslant 0
\end{subarray}}\hmu_i(\overline E)=\int_0^{+\infty}\rang(\mathcal F^t_{\mathrm{HN}}(E))\,\mathrm{d}t.\end{equation}
\end{prop}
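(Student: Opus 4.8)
The plan is to read everything off the shape of the polygon $P_{\overline E}$. Recall that $P_{\overline E}$ is concave and piecewise affine on $[0,r]$, passes through the origin (since $E_0=0$ gives $P_{\overline E}(0)=0$), and has slope $\hmu_i(\overline E)$ on the interval $[i-1,i]$. Concavity is equivalent to $\hmu_1(\overline E)\geqslant\hmu_2(\overline E)\geqslant\cdots\geqslant\hmu_r(\overline E)$, and for every integer $k$ one has $P_{\overline E}(k)=\sum_{i=1}^k\hmu_i(\overline E)$.

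First I would establish the left-hand equality. Since the slopes are non-increasing, the function $P_{\overline E}$ grows as long as the current slope is $\geqslant 0$ and decreases afterwards; hence its maximum on $[0,r]$ is attained at the integer $m:=\#\{i\,:\,\hmu_i(\overline E)\geqslant 0\}$ (with $m=0$ if every slope is negative). Evaluating gives $\npdeg(\overline E)=P_{\overline E}(m)=\sum_{i=1}^m\hmu_i(\overline E)=\sum_{i:\,\hmu_i(\overline E)\geqslant 0}\hmu_i(\overline E)$, which is the first identity. The only point needing a word of care is a vanishing slope $\hmu_i(\overline E)=0$, but including or excluding such an index alters neither the sum nor the value $P_{\overline E}(m)$.

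Next I would identify the integrand. Starting from the definition $\mathcal F^t_{\mathrm{HN}}(E)=\sum_{1\leqslant i\leqslant n,\ t\leqslant\alpha_i}E_i$ and using that the $\alpha_i$ are \emph{strictly} decreasing, the sum collapses to the single space $E_j$ with $j=\max\{i\,:\,\alpha_i\geqslant t\}$; comparing ranks with the block structure of the polygon (the slope $\hmu_i(\overline E)$ equals $\alpha_j$ exactly for $\rang(E_{j-1})<i\leqslant\rang(E_j)$) yields the key identity
\[
\rang\bigl(\mathcal F^t_{\mathrm{HN}}(E)\bigr)=\#\{i\in\{1,\ldots,r\}\,:\,\hmu_i(\overline E)\geqslant t\}=\sum_{i=1}^r\indic_{\hmu_i(\overline E)\geqslant t}
\]
valid for every $t>0$. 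Verifying that the rank of the filtration level at height $t$ counts exactly the polygon slopes exceeding $t$ is the real content of the argument, though it is elementary once the conventions are unwound.

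Finally I would integrate. By the layer-cake formula for the non-negative step integrand, $\int_0^{+\infty}\rang(\mathcal F^t_{\mathrm{HN}}(E))\,\mathrm{d}t=\sum_{i=1}^r\int_0^{+\infty}\indic_{\hmu_i(\overline E)\geqslant t}\,\mathrm{d}t=\sum_{i=1}^r\max(\hmu_i(\overline E),0)=\sum_{i:\,\hmu_i(\overline E)\geqslant 0}\hmu_i(\overline E)$, which is precisely the middle expression and closes the chain of equalities. I do not expect a genuine obstacle here: the whole proof is a concavity-and-bookkeeping computation, and the most error-prone spot is merely keeping the endpoint conventions $\alpha_0=+\infty$, $\alpha_{n+1}=-\infty$ consistent when passing between the $\alpha_j$ indexing and the per-unit slope indexing $\hmu_i(\overline E)$.
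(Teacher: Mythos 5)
Your proof is correct and follows essentially the same route as the paper: the first equality is argued exactly as there (the polygon is piecewise affine with non-increasing slopes $\hmu_i(\overline E)$, so its maximum is attained at the integer $m=\#\{i:\hmu_i(\overline E)\geqslant 0\}$), and for the second equality your identity $\rang(\mathcal F^t_{\mathrm{HN}}(E))=\#\{i:\hmu_i(\overline E)\geqslant t\}$ combined with the layer-cake formula is just the indicator-function rephrasing of the paper's computation $\mathrm{d}\rang(\mathcal F^t_{\mathrm{HN}}(E))=-\sum_{i}\delta_{\hmu_i(\overline E)}$ followed by Stieltjes integration by parts. There is no gap.
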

\begin{proof}
Comme la fonction $P_{\overline E}$ est affine sur chaque intervalle $[i-1,i]$, on obtient que 
\[\npdeg(\overline E)=\max_{i\in\{0,\ldots,r\}}P_{\overline E}(i)=\max_{i\in\{0,\ldots,r\}}\sum_{1\leqslant j\leqslant i}\hmu_j(\overline E).\]
En outre, comme la fonction $P_{\overline E}$ est concave, on a $\hmu_1(\overline E)\geqslant\ldots\geqslant\hmu_r(\overline E)$. La premi\`ere \'egalit\'e de \eqref{Equ:deg+} est donc d\'emontr\'ee.

Soit $0=E_0\subsetneq E_1\subsetneq\ldots\subsetneq E_n=E$ le drapeau de Harder-Narasimhan de $\overline E$. Pour tout $j\in\{1,\ldots,n\}$, soit $\alpha_j$ la pente de la fonction $P_{\overline E}$ sur l'intervalle $[\rang(E_{j-1}),\rang(E_j)]$. Par la d\'efinition de $\mathcal F^t_{\mathrm{HN}}$, on a
\[\mathrm{d}\rang(\mathcal F_{\mathrm{HN}}^t(E))=-\sum_{j=1}^n\big(\rang(E_j)-\rang(E_{j-1})\big)\delta_{\alpha_j}=-\sum_{i=1}^r\delta_{\hmu_i(\overline E)}\]
comme mesures bor\'eliennes sur $\mathbb R$, o\`u $\delta_x$ d\'esigne la mesure de Dirac en $x$. On obtient alors
\[\int_0^{+\infty}\rang(\mathcal F_{\mathrm{HN}}^t(E))\,\mathrm{d}t=-\int_{[0,+\infty[}t\,\mathrm{d}\rang(\mathcal F_{\mathrm{HN}}^t(E))=\sum_{\begin{subarray}{c}
1\leqslant i\leqslant r\\
\widehat{\mu}_i(\overline E)\geqslant 0
\end{subarray}}\hmu_i(\overline E).\]
\end{proof}

Le r\'esultat suivant relie la fonction $\npdeg(.)$ au nombre d'\'el\'ements effectifs dans un fibr\'e vectoriel ad\'elique. Pour tout fibr\'e vectoriel ad\'elique $\overline E$ sur $\Spec K$, on d\'esigne par $\widehat{H}^0(\overline E)$ l'ensemble des \'el\'ements $s\in E$ tels que $\sup_{v\in M_K}\|s\|_v\leqslant 1$ (un tel \'el\'ement est appel\'e une \emph{section effective} de $\overline E$). C'est un ensemble fini. On d\'esigne par $\widehat{h}^0(\overline E)$ le nombre r\'eel $\ln(\#\widehat{H}^0(\overline E))$. Rappelons d'abord un r\'esultat de Gillet et Soul\'e \cite[th\'eor\`eme 2]{Gillet-Soule91}. Pour tout entier $n\geqslant 1$, on introduit une constante $C(K,n)$ comme la suite 
\[nd_K\ln(3)+n(r_1+r_2)\ln(2)+\frac{n}{2}\ln|\Delta_K|-r_1\ln(V(B_n)n!)-r_2\ln(V(B_{2n})(2n)!)+\ln((d_Kn)!),\]
o\`u $d_K=[K:\mathbb Q]$, $\Delta_K$ est le discriminant du corps $K$, $B_n$ d\'esigne la boule unit\'e dans $\mathbb R^n$, $V(.)$ est la mesure de Lebesgue, et $r_1$ et $r_2$ sont respectivement le nombre des places r\'eelles et complexes de $K$. Rappelons que la formule de Sterling \[n!\sim \sqrt{2\pi n}(n/e)^n\quad (n\rightarrow+\infty)\] et la relation 
\[V(B_n)=\frac{\pi^{n/2}}{\Gamma(n/2+1)}\]
montrent que
\[C(K,n)=\frac 12[K:\mathbb Q]n\ln(n)+O(n),\qquad n\rightarrow+\infty.\]
 
\begin{theo}[Gillet-Soul\'e]
Soit $\overline E$ un fibr\'e vectoriel ad\'elique sur $\Spec K$ qui provient d'un fibr\'e vectoriel hermitien sur $\Spec\mathcal O_K$. Soit $n$ le rang de $E$ sur $K$. Alors  on a
\begin{equation}\label{Equ:Gillet-Soule}\Big|\widehat{h}^0(\overline E)-\widehat{h}^0(\overline\omega_{K/\mathbb Q}\otimes \overline E^\vee)-
\hdeg(\overline E)\Big|\leqslant C(K,n),\end{equation}
o\`u $\overline\omega_{K/\mathbb Q}$ est le fibr\'e inversible ad\'elique associ\'e \`a $\omega_{\mathcal O_K}=\Hom_{\mathbb Z}(\mathcal O_K,\mathbb Z)$ muni de la famille de normes $(\|.\|_v)_{v\in M_{K,\infty}}$ telles que $\|\mathrm{tr}_{K/\mathbb Q}\|_v=1$ pour tout $v\in M_{K,\infty}$.
\end{theo}

Le fibr\'e inversible ad\'elique $\overline{\omega}_{K/\mathbb Q}$ devrait \^etre consid\'er\'e comme le faisceau dualisant relative arithm\'etique de $\Spec K\rightarrow\Spec\mathbb Q$. Son degr\'e d'Arakelov est $\ln|\Delta_K|$. 

\begin{lemm}\label{Lem:estdeh0}
Soit $\overline E$ un fibr\'e vectoriel ad\'elique non-nul sur $\Spec K$ qui provient d'un fibr\'e vectoriel hermitien sur $\Spec\mathcal O_K$.
\begin{enumerate}[(a)]
\item Si $\hmu_{\max}(\overline E)<0$, alors $\widehat{h}^0(\overline E)=0$.
\item Si $\hmu_{\min}(\overline E)>[K:\mathbb Q]^{-1}\ln|\Delta_K|$, alors $|\widehat{h}^0(\overline E)-\hdeg(\overline E)|\leqslant C(K,\rang(E))$.
\item Si $\hmu_{\min}(\overline E)\geqslant 0$, alors $|\widehat{h}^0(\overline E)-\hdeg(\overline E)|\leqslant\ln|\Delta_K|\rang(E)+C(K,\rang(E))$.
\end{enumerate}
\end{lemm}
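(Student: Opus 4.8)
The plan is to transpose the proof of lemme \ref{Lem:h0} into the adelic setting, replacing the Riemann--Roch formula \eqref{Equ:RR} by the Gillet--Soulé inequality \eqref{Equ:Gillet-Soule} and the twist by $\omega_C$ by the twist by $\overline\omega_{K/\mathbb Q}$. Throughout I use the Harder--Narasimhan formalism for adelic hermitian bundles recalled above: the duality $\hmu_{\max}(\overline E^\vee)=-\hmu_{\min}(\overline E)$, the behaviour $\hmu_{\min}(\overline E\otimes\overline L)=\hmu_{\min}(\overline E)+\hmu(\overline L)$ (and likewise for $\hmu_{\max}$) under tensoring by an adelic line bundle, and the value $\hmu(\overline\omega_{K/\mathbb Q})=[K:\mathbb Q]^{-1}\ln|\Delta_K|$.

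For (a), I argue exactly as in lemme \ref{Lem:h0}.(a): a nonzero effective section $s\in\widehat H^0(\overline E)$ generates a line $Ks$ whose induced adelic structure satisfies $\ndeg(\overline{Ks})=-\sum_{v\in M_K}\frac{[K_v:\mathbb Q_v]}{[K:\mathbb Q]}\ln\|s\|_v\geqslant 0$, since $\|s\|_v\leqslant 1$ for every place $v$. Hence $\hmu_{\max}(\overline E)\geqslant\hmu(\overline{Ks})\geqslant 0$, so the hypothesis $\hmu_{\max}(\overline E)<0$ forces $\widehat H^0(\overline E)=\{0\}$, i.e. $\widehat h^0(\overline E)=0$. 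For (b), I apply \eqref{Equ:Gillet-Soule} to $\overline E$ and compute $\hmu_{\max}(\overline\omega_{K/\mathbb Q}\otimes\overline E^\vee)=[K:\mathbb Q]^{-1}\ln|\Delta_K|-\hmu_{\min}(\overline E)$, which is strictly negative under the hypothesis of (b). By part (a) the term $\widehat h^0(\overline\omega_{K/\mathbb Q}\otimes\overline E^\vee)$ then vanishes (this bundle still comes from a hermitian bundle over $\Spec\mathcal O_K$), and \eqref{Equ:Gillet-Soule} collapses to $|\widehat h^0(\overline E)-\hdeg(\overline E)|\leqslant C(K,\rang(E))$.

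For (c) I treat the two bounds separately. The lower bound is immediate from \eqref{Equ:Gillet-Soule}, since dropping the nonnegative term $\widehat h^0(\overline\omega_{K/\mathbb Q}\otimes\overline E^\vee)$ gives $\widehat h^0(\overline E)\geqslant\hdeg(\overline E)-C(K,\rang(E))$. For the upper bound, suppose first $\hmu_{\min}(\overline E)>0$. The geometric input replacing $h^0(\omega_C)>0$ is that the trace form $\mathrm{tr}_{K/\mathbb Q}$, seen as the element $1$ of the inverse different, is an effective section of $\overline\omega_{K/\mathbb Q}$: its archimedean norms equal $1$ by the chosen normalisation, and its non-archimedean norms are $\leqslant 1$ because $1$ lies in the integral structure $\omega_{\mathcal O_K}\otimes\mathcal O_{\mathfrak p}$. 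Multiplication by this section embeds $\widehat H^0(\overline E)$ into $\widehat H^0(\overline E\otimes\overline\omega_{K/\mathbb Q})$ (the tensor norms are submultiplicative), whence $\widehat h^0(\overline E)\leqslant\widehat h^0(\overline E\otimes\overline\omega_{K/\mathbb Q})$. Since $\hmu_{\min}(\overline E\otimes\overline\omega_{K/\mathbb Q})=\hmu_{\min}(\overline E)+[K:\mathbb Q]^{-1}\ln|\Delta_K|>[K:\mathbb Q]^{-1}\ln|\Delta_K|$, part (b) applies to $\overline E\otimes\overline\omega_{K/\mathbb Q}$ and yields $\widehat h^0(\overline E\otimes\overline\omega_{K/\mathbb Q})\leqslant\hdeg(\overline E)+\rang(E)\ln|\Delta_K|+C(K,\rang(E))$, using $\hdeg(\overline E\otimes\overline\omega_{K/\mathbb Q})=\hdeg(\overline E)+\rang(E)\ln|\Delta_K|$.

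The only genuine obstacle I anticipate is the boundary case $\hmu_{\min}(\overline E)=0$, where the strict inequality required to invoke (b) just fails; this is the arithmetic counterpart of the semistable slope-zero situation isolated by lemme \ref{Lem:majh0ss} in the proof of théorème \ref{Thm:h0etdegplus}. I would remove it by a monotone limiting argument. Twist $\overline E$ by the hermitian line bundle $\overline M_\delta$ obtained from $\mathcal O_K$ by scaling each archimedean norm by $e^{-\delta}$, so that $\hmu(\overline M_\delta)=\delta>0$ and hence $\hmu_{\min}(\overline E\otimes\overline M_\delta)=\delta>0$. Scaling the norms down only enlarges the set of effective sections, so $\widehat h^0(\overline E)\leqslant\widehat h^0(\overline E\otimes\overline M_\delta)$, while $\hdeg(\overline E\otimes\overline M_\delta)=\hdeg(\overline E)+\delta[K:\mathbb Q]\rang(E)$. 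Applying the already established case $\hmu_{\min}>0$ of (c) to $\overline E\otimes\overline M_\delta$ and letting $\delta\to 0^+$ recovers $\widehat h^0(\overline E)\leqslant\hdeg(\overline E)+\rang(E)\ln|\Delta_K|+C(K,\rang(E))$, which together with the lower bound completes the proof.
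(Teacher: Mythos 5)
Your proof is correct. Parts (a), (b) and the lower bound in (c) coincide with the paper's own argument: an effective section spans a sub-line of nonnegative normalised degree (the paper phrases this as the slope inequality applied to the morphism $\mathcal O(0)\rightarrow\overline E$ defined by the section), and (b) is Gillet--Soul\'e combined with the vanishing from (a) applied to $\overline\omega_{K/\mathbb Q}\otimes\overline E{}^\vee$. Where you genuinely diverge is the upper bound in (c). The paper treats the whole range $\hmu_{\min}(\overline E)\geqslant 0$ in one stroke: it twists by $\mathcal O(t)$ with $t=[K:\mathbb Q]^{-1}\ln|\Delta_K|+\varepsilon$, so that (b) applies to the twist, uses $\widehat{h}^0(\overline E)\leqslant\widehat{h}^0(\overline E\otimes\mathcal O(t))$, and lets $\varepsilon\rightarrow 0$. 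You instead transpose lemme \ref{Lem:h0}.(c) faithfully: multiplication by $\mathrm{tr}_{K/\mathbb Q}$ --- which you rightly identify as a nonzero effective section of $\overline\omega_{K/\mathbb Q}$, since it lies in $\Hom_{\mathbb Z}(\mathcal O_K,\mathbb Z)$ and has archimedean norms $1$ by the chosen normalisation --- gives $\widehat{h}^0(\overline E)\leqslant\widehat{h}^0(\overline E\otimes\overline\omega_{K/\mathbb Q})$, and (b) applies to this twist when $\hmu_{\min}(\overline E)>0$; you then still have to reintroduce the paper's scaling device (your $\overline M_\delta$ is exactly the paper's $\mathcal O(\delta)$) to absorb the boundary case $\hmu_{\min}(\overline E)=0$. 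Your route buys the explicit arithmetic--geometric dictionary ($\mathrm{tr}_{K/\mathbb Q}$ playing the role of a nonzero section of $\omega_C$, $\ln|\Delta_K|$ that of $2g-2$) and a clean derivation of the constant: $\hdeg(\overline E\otimes\overline\omega_{K/\mathbb Q})=\hdeg(\overline E)+\rang(E)\ln|\Delta_K|$ yields $\rang(E)\ln|\Delta_K|+C(K,\rang(E))$ directly, which is precisely the bound stated in the lemma (the paper's intermediate line, which reads $\hdeg(\overline E)+t\rang(E)$, omits a factor $[K:\mathbb Q]$ in front of $t\rang(E)$; once corrected, its limit gives the same constant). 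The paper's route buys brevity: a single twist, a single limit, and no case distinction.
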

\begin{proof}
Pour tout nombre r\'eel $t$, on d\'esigne par $\mathcal O(t)$ le fibr\'e inversible ad\'elique sur $\Spec K$ dont l'espace vectoriel sous-jacent s'identifie \`a $K$ et telle que $\|1\|_{\mathfrak p}=1$ pour toute place non-archim\'edienne $\mathfrak p$  et que $\|1\|_{\sigma}=\mathrm{e}^{-1}$ pour toute place archim\'edienne $\sigma$. Le degr\'e d'Arakelov du fibr\'e inversible ad\'elique $\mathcal O(t)$ est alors $[K:\mathbb Q]t$.

(a) On suppose que $\overline E$ poss\`ede une section effective non-nulle, qui d\'efinit un homomorphisme injectif de $\mathcal O(0)$ vers $\overline E$. D'apr\`es l'in\'egalit\'e de pentes (cf. \cite[lemme 6.4]{Gaudron08}), on obtient $\hmu_{\max}(\overline E)\geqslant 0$.

(b) Comme $\hmu_{\min}(\overline E)>[K:\mathbb Q]^{-1}\ln|\Delta_K|$, on a \[\hmu_{\max}(\overline{\omega}_{K/\mathbb Q}\otimes\overline E{}^\vee)=\ndeg(\overline{\omega}_{K/\mathbb Q})+\hmu_{\max}(\overline E{}^\vee)=[K:\mathbb Q]^{-1}\ln|\Delta_K|-\hmu_{\min}(\overline E)<0.\] Par le r\'esultat obtenu dans (a), on obtient $\widehat{h}^0(\overline{\omega}_{K/\mathbb Q}\otimes\overline E^\vee)=0$. L'in\'egalit\'e annonc\'ee provient donc de la formule \eqref{Equ:Gillet-Soule}.

(c) Soient $\varepsilon>0$ un nombre r\'eel et $t=[K:\mathbb Q]^{-1}\ln|\Delta_K|+\varepsilon$. On a alors $\hmu_{\min}(\overline E\otimes\mathcal O(t))>[K:\mathbb Q]^{-1}\ln|\Delta_K|$. D'apr\`es (b), on obtient
\[\widehat{h}^0(\overline E\otimes\mathcal O(t))\leqslant\hdeg(\overline E\otimes\mathcal O(t))+C(K,\rang(E))=\hdeg(\overline E)+t\rang(E)+C(K,\rang(E)).\]
En outre, comme $t>0$, on a $\widehat{h}^0(\overline E)\leqslant\widehat{h}^0(\overline E\otimes\mathcal O(t))$. On obtient donc 
\[\widehat{h}^0(\overline E)-\hdeg(\overline E)\leqslant t\rang(E)+C(K,\rang(E)).\]
De plus, l'in\'egalit\'e \eqref{Equ:Gillet-Soule} implique que
\[\widehat{h}^0(\overline E)-\hdeg(\overline E)\geqslant\widehat{h}^0(\overline{\omega}_{K/\mathbb Q}\otimes\overline E^\vee)-C(K,\rang(E))\geqslant -C(K,\rang(E)).\]
On obtient alors
\[|\widehat{h}^0(\overline E)-\hdeg(\overline E)|\leqslant t\rang(E)+C(K,\rang(E)).\]
Comme $\varepsilon$ est arbitraire, on obtient le r\'esultat souhait\'e.
\end{proof}

\begin{theo}
Soit $\overline E$ un fibr\'e vectoriel ad\'elique non-nul sur $\Spec K$ qui provient d'un fibr\'e vectoriel hermitien sur $\Spec\mathcal O_K$. Alors on a 
\begin{equation}\label{Equ:h0degset}
\big|\widehat{h}^0(\overline E)-[K:\mathbb Q]\npdeg(\overline E)\big|\leqslant\rang(E)\ln|\Delta_K|+C(K,\rang(E)).
\end{equation}
\end{theo}
\begin{proof}
Soient $0=E_0\subsetneq E_1\subsetneq\ldots\subsetneq E_n=E$ le drapeau de Harder-Narasimhan de $E$, et $\alpha_i=\hmu(\overline E_i/\overline E_{i-1})$. Soit $j$  le dernier indice dans $\{1,\ldots,n\}$ tel que $\alpha_j\geqslant 0$. Si un tel indice n'existe pas, on prend $j=0$ par convention. Par d\'efinition on a $\npdeg(\overline E)=\ndeg(\overline E_j)$. Si $j=0$, alors $\hmu_{\max}(\overline E)=0$, et donc $\widehat{h}^0(\overline E)=0$. En outre, par convention on a $\npdeg(\overline E)=0$. L'in\'egalit\'e \eqref{Equ:h0degset} est donc triviale.
Si $j>0$, alors $\hmu_{\max}(\overline E/\overline E_j)=\alpha_{j-1}<0$. Par cons\'equent, on a  $\widehat{h}^0(\overline E/\overline E_j)=0$ et donc $\widehat{h}^0(\overline E)=\widehat{h}^0(\overline E_j)$. En outre, on a $\hmu_{\min}(\overline E_j)=\alpha_j\geqslant 0$, et par le lemme \ref{Lem:estdeh0}.(c), on obtient
\[\begin{split}&\quad\;\big|\widehat{h}^0(\overline E)-[K:\mathbb Q]\npdeg(\overline E)\big|=\big|\widehat{h}^0(\overline E_j)-[K:\mathbb Q]\npdeg(\overline E_j)\big|\\
&\leqslant\rang(E_j)\ln|\Delta_K|+C(K,\rang(E_j))\leqslant\rang(E)\ln|\Delta_K|+C(K,\rang(E)).
\end{split}\] 
Le r\'esultat est donc d\'emontr\'e.
\end{proof}

\subsection{Filtration par hauteur}
Soit $\overline E=(E,(\|.\|_v)_{v\in M_K})$ un fibr\'e vectoriel ad\'elique sur $\Spec K$ (qui n'est pas n\'ecessairement hermitien). Pour tout \'el\'ement non-nul $s$ de $\overline E$, on d\'esigne par $h_{\overline E}(s)$ le nombre $-\ndeg(\overline{Ks})$, appel\'e la \emph{hauteur normalis\'ee} de $s$. Plus g\'en\'eralement, si $K'$ est une extension finie de $K$, on peut construire un fibr\'e vectoriel ad\'elique $\overline E\otimes_KK'$ sur $\Spec K'$ dont l'espace vectoriel sous-jacent est $E_{K'}:=E\otimes_KK'$ et dont la norme en $v'\in M_{K'}$ s'identifie \`a $\|.\|_v$ avec $v\in M_K$, o\`u $v'$ prolonge $v$. Ainsi on peut d\'efinir la hauteur normalis\'ee pour tout \'el\'ement non-nul de $E_{K'}$. En outre, pour toute extension finie $K''$ de $K'$, la hauteur normalis\'ee de $s\in E_{K'}$ s'identifie \`a celle de son image canonique dans $E_{K''}$. Cette observation permet d'\'etendre $h_{\overline E}$ en une fonction sur l'ensemble des vecteurs non-nuls dans $E_{K^{\mathrm{a}}}$, o\`u $K^{\mathrm{a}}$ d\'esigne la cl\^oture alg\'ebrique de $K$. En outre, d'apr\`es la formule du produit, la fonction $h_{\overline E}$ est invariante sous la multiplication par un scalaire non-nul dans $K^{\mathrm{a}}$. Ainsi on peut la consid\'erer comme une fonction d\'efinie sur l'ensemble des points alg\'ebriques de $\mathbb P(E^\vee)$. Il s'av\`ere que cette fonction s'identifie \`a la hauteur absolue par rapport au faisceau inversible universel $\mathcal O_{E^\vee}(1)$ muni des m\'etriques de Fubini-Study.

Pour tout nombre r\'eel $t$, on d\'esigne par $\mathcal F_{\mathrm{ht}}^t(E_{K^{\mathrm{a}}})$ le sous-$K^{\mathrm{a}}$-espace vectoriel de $E_{K^{\mathrm{a}}}$ engendr\'e par tous les vecteurs non-nuls $s$ v\'erifiant $h_{\overline E}(s)\leqslant -t$. La famille $(\mathcal F_{\mathrm{ht}}^t(E_{K^{\mathrm{a}}}))_{t\in\mathbb R}$ d\'efinit une $\mathbb R$-filtration d\'ecroissante de l'espace vectoriel $E_{K^{\mathrm{a}}}$. Ses points de saut successifs (o\`u on compte les multiplicit\'es) sont la version logarithmique des minima absolus d\'efinis par Roy et Thunder \cite{Roy_Thunder96} ({\huayi voir aussi \cite{Gaudron_Remond13} pour une pr\'esentation d\'etaill\'ee de diff\'erentes notions de minima}) dans le cadre de la g\'eom\'etrie des nombres ad\'eliques, et par Soul\'e dans le cadre de la g\'eom\'etrie d'Arakelov\footnote{Dans son expos\'e au colloque ``\emph{Arakelov theory and its arithmetic applications.}'' le 22 f\'evrier 2010 \`a Regensbourg}. Pour tout entier $i\in\{1,\ldots,\rang(E)\}$, on d\'esigne par $\Lambda_i(\overline E)$ le nombre
\[\sup\{t\in\mathbb R\,:\,\rang_{K^{\mathrm{a}}}(\mathcal F^t_{\mathrm{ht}}(E_{K^{\mathrm{a}}}))\geqslant i\},\]
appel\'e le $i^{\text{\`eme}}$ minimum absolu logarithmique de $\overline E$.

La comparaison entre les minima successifs et les pentes successives d'un fibr\'e vectoriel ad\'elique hermitien est un probl\`eme naturel. Les in\'egalit\'es $\Lambda_i(.)\leqslant\widehat{\mu}_i(.)$ sont relativement standards et r\'esultent de l'in\'egalit\'e de pentes. Cependant la comparaison au sens inverse est beaucoup plus d\'elicate. Conjecturalement {\huayi on a
\begin{equation}\label{Equ:compainv}\mu_i(\overline E)\leqslant\Lambda_i(\overline E)+\frac 12\ln(\rang(E)),\quad i\in\{1,\ldots,\rang(E)\}\end{equation}}
pour tout fibr\'e vectoriel ad\'elique hermitien $\overline E$ sur $\Spec K$. Une approche possible pour attaquer ce probl\`eme est d'\'etablir une version absolue du th\'eor\`eme de transf\'erence \`a la Banaszczyk \cite{Banaszczyk95}, qui n'est malheureusement pas encore disponible.

Dans la suite, on \'etablit une version plus faible de l'in\'egalit\'e \eqref{Equ:compainv}. Il s'agit d'une comparaison explicite entre 
\[\sum_{i=1}^{\rang(E)}\max(\hmu_i(\overline E),0)\quad\text{et}\quad\sum_{i=1}^{\rang(E)}\max(\Lambda_i(\overline E),0)\]
qui provient du lemme de Siegel absolu d\^u \`a Bombieri-Vaaler \cite{Bombieri_Vaaler83} et Zhang \cite{Zhang95}.

\begin{prop}
Si $\overline E$ est un fibr\'e vectoriel ad\'elique hermitien sur $\Spec K$, alors on a
\begin{equation}\label{Equ:Siegeltronque}\sum_{i=1}^{\rang(E)}\max(\hmu_i(\overline E),0)\leqslant\sum_{i=1}^{\rang(E)}\max(\Lambda_i(\overline E),0)+\frac 12\rang(E)\ln(\rang(E)).\end{equation}
\end{prop}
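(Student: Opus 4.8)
The plan is to deduce the inequality from the absolute Siegel lemma of Bombieri--Vaaler and Zhang, applied not to $\overline E$ itself but to the positive part of its Harder--Narasimhan filtration. Recall that this lemma furnishes, for any non-zero Hermitian adelic vector bundle $\overline F$ of rank $m$ on $\Spec K$, the lower bound
\[
\ndeg(\overline F)\leqslant\sum_{i=1}^{m}\Lambda_i(\overline F)+\tfrac12\,m\ln(m),
\]
which is the reverse, up to the error term $\tfrac12 m\ln m$, of the elementary pente inequality $\sum_{i}\Lambda_i(\overline F)\leqslant\ndeg(\overline F)$ (itself a consequence of $\Lambda_i(\overline F)\leqslant\hmu_i(\overline F)$). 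The whole arithmetic difficulty of the proposition is carried by this lemma, which I would invoke as a black box; the task is then only to organise the reduction so that the truncations $\max(\cdot,0)$ match up.

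First I would set $E_j:=\mathcal F_{\mathrm{HN}}^0(E)$, the step of the Harder--Narasimhan filtration of $\overline E$ spanned by the subquotients of nonnegative slope; equivalently $j$ is the largest index with $\alpha_j\geqslant 0$, so that $\hmu_{\min}(\overline E_j)\geqslant 0$ and every slope of $\overline E_j$ is nonnegative. Since the maximum of the concave polygon $P_{\overline E}$ is attained at the abscissa $\rang(E_j)$, the formula \eqref{Equ:deg+} gives $\sum_{i=1}^{\rang(E)}\max(\hmu_i(\overline E),0)=\npdeg(\overline E)=\ndeg(\overline E_j)$. If $j=0$ then $E_j=0$ and both sides of the desired inequality vanish, so I may assume $m:=\rang(E_j)\geqslant 1$. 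Equipping $E_j$ with the restricted adelic norms (again a Hermitian adelic bundle, whose slopes are the first $m$ slopes of $\overline E$), I would apply the Siegel lemma above to $\overline E_j$ to obtain $\ndeg(\overline E_j)\leqslant\sum_{i=1}^{m}\Lambda_i(\overline E_j)+\tfrac12 m\ln(m)$.

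It then remains to compare the absolute minima of $\overline E_j$ with those of $\overline E$. Because the norms on $E_j$ are the restrictions of those on $E$, the normalised height $h_{\overline E_j}$ is the restriction of $h_{\overline E}$ to the non-zero vectors of $E_{j,K^{\mathrm a}}$; hence $\mathcal F_{\mathrm{ht}}^t(E_{j,K^{\mathrm a}})\subseteq\mathcal F_{\mathrm{ht}}^t(E_{K^{\mathrm a}})$ for every $t$, so that $\rang(\mathcal F_{\mathrm{ht}}^t(E_{j,K^{\mathrm a}}))\leqslant\rang(\mathcal F_{\mathrm{ht}}^t(E_{K^{\mathrm a}}))$ and therefore $\Lambda_i(\overline E_j)\leqslant\Lambda_i(\overline E)$ for $i\in\{1,\dots,m\}$. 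Combining $\Lambda_i(\overline E_j)\leqslant\max(\Lambda_i(\overline E_j),0)\leqslant\max(\Lambda_i(\overline E),0)$ and enlarging the range of summation to all $r:=\rang(E)$ indices yields $\sum_{i=1}^{m}\Lambda_i(\overline E_j)\leqslant\sum_{i=1}^{r}\max(\Lambda_i(\overline E),0)$; and since $t\mapsto t\ln t$ is nondecreasing on $[1,+\infty[$ with $1\leqslant m\leqslant r$, one has $\tfrac12 m\ln m\leqslant\tfrac12 r\ln r$. Substituting these two bounds into the Siegel estimate for $\overline E_j$ gives exactly \eqref{Equ:Siegeltronque}.

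Since the arithmetic substance is entirely external, I expect the only genuine subtlety to be the monotonicity $\Lambda_i(\overline E_j)\leqslant\Lambda_i(\overline E)$ of the absolute minima under passage to the subbundle $\overline E_j$ (that is, the inclusion of height filtrations for the restricted norms), together with the bookkeeping needed to turn the plain sum $\sum_i\Lambda_i(\overline E_j)$ produced by the Siegel lemma into the truncated sum $\sum_i\max(\Lambda_i(\overline E),0)$ appearing in the statement.
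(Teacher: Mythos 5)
Your proof is correct and follows essentially the same route as the paper: you take the last step $E_j$ of the Harder--Narasimhan flag with $\hmu_{\min}\geqslant 0$ (the paper's subspace $F$), identify $\ndeg(\overline E_j)$ with the truncated sum of slopes, apply the absolute Siegel lemma of Bombieri--Vaaler and Zhang to $\overline E_j$, and use the monotonicity $\Lambda_i(\overline E_j)\leqslant\Lambda_i(\overline E)$ under passage to a subspace. The only cosmetic difference is that you spell out the trivial case $j=0$ and the bookkeeping $\tfrac12 m\ln m\leqslant\tfrac12 r\ln r$, which the paper leaves implicit.
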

\begin{proof}
Rappelons que le lemme de Siegel absolu montre que, pour tout fibr\'e vectoriel ad\'elique hermitien $\overline E$ sur $\Spec K$, on a (cf. \cite[th\'eor\`eme 4.14]{Gaudron08} et \cite[\S2.1.3]{Gaudron09} \footnote{Comme on consid\`ere les minima absolus, le d\'efaut de puret\'e est anodin ici.})
\begin{equation}\label{Equ:sigeltronque2}\sum_{i=1}^{\rang(E)}\Lambda_i(\overline E)\geqslant \ndeg(\overline E)-\frac{1}{2}\rang(E)\ln(\rang(E)).\end{equation}
L'in\'egalit\'e est donc vraie lorsque $\hmu_{\min}(\overline E)\geqslant 0$. 

Dans le cas g\'en\'eral, il existe un sous-espace vectoriel $F$ de $E$ tel que $\widehat{\mu}_{\min}(\overline F)\geqslant 0$ et que 
\begin{equation}\label{Equ:mufetmue}\ndeg(\overline F)=\sum_{i=1}^{\rang(F)}\widehat{\mu}_i(\overline F)=\sum_{i=1}^{\rang(E)}\max(\widehat{\mu}_i(\overline E),0).\end{equation}
On peut choisir $F$ comme le dernier sous-espace vectoriel dans le drapeau de Harder-Narasimhan de $E$ v\'erifiant $\widehat{\mu}_{\min}(\overline F)\geqslant 0$. Comme $F$ est un sous-espace vectoriel de $E$, on a $\Lambda_i(\overline F)\leqslant\Lambda_i(\overline E)$ pour tout $i\in\{1,\ldots,\rang(F)\}$. L'in\'egalit\'e \eqref{Equ:sigeltronque2} appliqu\'ee \`a $\overline F$ montre alors que
\[\ndeg(F)
\leqslant\sum_{i=1}^{\rang(F)}\Lambda_i(\overline F)+\frac 12\rang(F)\ln(\rang(F))\leqslant\sum_{i=1}^{\rang(F)}\Lambda_i(\overline E)+\frac 12\rang(E)\ln(\rang(E)).\]
D'apr\`es la formule \eqref{Equ:mufetmue}, on en d\'eduit l'in\'egalit\'e \eqref{Equ:Siegeltronque}.
\end{proof}

\begin{rema} {\huayi L'in\'egalit\'e \eqref{Equ:Siegeltronque} est une cons\'equence imm\'ediate de \eqref{Equ:compainv}. En outre,}
en utilisant le lemme de Siegel absolu, on peut montrer que, si $\overline E$ est un fibr\'e vectoriel ad\'elique sur $\Spec K$ qui est hermitien, alors on a 
\begin{equation}
\label{Equ:mu1etlambda1}
\widehat{\mu}_1(\overline E)\leqslant\Lambda_1(\overline E)+\frac12\ln(\rang(E)).
\end{equation}
On renvoie les lecteurs dans \cite[\S3.2]{Gaudron_Remond} pour une d\'emonstration.
\end{rema}

\subsection{Le cas non-hermitien}
Soit $\overline E$ un  fibr\'e vectoriel ad\'elique g\'en\'eral. Gaudron a d\'efini dans \cite{Gaudron08} le degr\'e d'Arakelov\footnote{Si on fixe un isomorphisme d'espaces vectoriels $\phi:E\rightarrow K^n$, o\`u $n=\rang_K(E)$, alors le degr\'e d'Arakelov de $\overline E$ est d\'efini comme 
\[\hdeg(\overline E)=\ln\frac{\mathrm{vol}(\phi(\mathbb B(\overline E)))}{\mathrm{vol}(\mathbb B(\overline K^n))},\]
o\`u $\mathbb B(.)$ d\'esigne la boule unit\'e ad\'elique, et $\mathrm{vol}$ d\'esigne une mesure de Haar sur l'espace ad\'elique $\mathbb A_K^n$. Cette d\'efinition ne d\'epend pas du choix de $\phi$ et $\mathrm{vol}$.} de $\overline E$ comme la diff\'erence entre la caract\'eristique d'Euler-Poincar\'e de $\overline E$ et celle  du fibr\'e vectorial ad\'elique trivial dont le rang est $\rang(E)$. Cela lui permet de g\'en\'eraliser la notion des pentes successives dans un cadre plus g\'en\'eral des fibr\'es vectoriels ad\'eliques non n\'ecessairement hermitiens. Comme dans le cas hermitien, les pentes successives de $\overline E$ sont d\'efinies comme les pentes du polygone de Harder-Narasimhan de $\overline E$, dont le graphe est le bord sup\'erieur de l'enveloppe convexe des points dans $\mathbb R^2$ de coordonn\'ees $(\rang(F),\ndeg(\overline F))$, o\`u $F$ parcourt l'ensemble des sous-espaces vectoriels de $E$. \`A l'aide de la m\'ethode d'ellipso\"{\i}de de John-L\"owner\footnote{On renvoie les lecteurs dans \cite[\S4]{Gaudron08} pour les d\'etails.}, on peut associer \`a $E$ une structure de fibr\'e vectoriel ad\'elique hermitien $(\|.\|_{v}')_{v\in M_K}$ de sorte que $\|.\|_{v}'=\|.\|_v$ si $v$ est une place non-archim\'edienne, et
\[\|.\|_v'\leqslant \|.\|_v\leqslant (\rang(E))^{1/2}\|.\|_{v}'\]
lorsque $v$ est archim\'edienne.
Si on note $\overline E{}'$ le fibr\'e vectoriel ad\'elique $(E,(\|.\|_v')_{v\in M_K})$, on a $\hmu_{i}(\overline E)\leqslant\hmu_i(\overline E{}')$ et $\Lambda_i(\overline E{}')\leqslant\Lambda_i(\overline E)+\frac 12\ln(\rang(E))$ pour tout $i\in\{1,\ldots,\rang(E)\}$. On obtient \`a partir de l'in\'egalit\'e \eqref{Equ:Siegeltronque} (appliqu\'ee \`a $\overline E{}'$) la relation suivante
\begin{equation}\label{Equ:Siegeltronquenonhem}\sum_{i=1}^{\rang(E)}\max(\hmu_i(\overline E),0)\leqslant\sum_{i=1}^{\rang(E)}\max(\Lambda_i(\overline E),0)+\rang(E)\ln(\rang(E)).\end{equation}

\section{Majoration de la fonction de Hilbert-Samuel arithm\'etique} \label{Sec:majroationarith}
Soit $K$ un corps de nombres. 
Dans ce paragraphe, on \'etablit un analogue arithm\'etique du corollaire \ref{Cor:majHS} pour un syst\`eme gradu\'e en fibr\'es vectoriels ad\'eliques sur $\Spec K$.  

Soit $X$ un sch\'ema projectif et int\`egre de dimension $d\geqslant 1$ sur $\Spec K$, $L$ un faisceau inversible gros sur $X$. On entend par \emph{syst\`eme lin\'eaire gradu\'e} de $L$ \emph{en fibr\'es vectoriels ad\'eliques} tout syst\`eme lin\'eaire gradu\'e $E_\sbullet=\bigoplus_{n\geqslant 0}E_n$ de $L$ dont chaque composante homog\`ene $E_n$ est muni d'une structure de fibr\'e vectoriel ad\'elique sur $\Spec K$ de telle sorte que 
\begin{equation}\label{Equ:suradditive}\|s\cdot s'\|_{v}\leqslant\|s\|_v\cdot\|s'\|_v\end{equation}
pour tout couple $(n,m)\in\mathbb N^2$ et tous $s\in E_{n,\mathbb C_v}$, $s'\in E_{m,\mathbb C_v}$. Cette in\'egalit\'e montre que la suite $(\Lambda_1(\overline E_n))_{n\geqslant 1}$ est sur-additive. Donc la suite $(\Lambda_1(\overline E_n)/n)_{n\geqslant 1}$ converge vers un \'el\'ement dans $\mathbb R\cup\{+\infty\}$ pourvu que $E_n\neq\{0\}$ pour tout entier $n$ suffisamment positif. On d\'esigne par $\widehat{\mu}_{\max}^{\mathrm{asy}}(\overline E_\sbullet)$ cette limite\footnote{Pour tout fibr\'e vectoriel ad\'elique non-nul (non n\'ecessairement hermitien) $\overline F$ sur $\Spec K$, on a $\Lambda_1(\overline F)\leqslant\hmu_{1}(\overline F)\leqslant\Lambda_1(\overline F)+ \ln(\rang(F))$. Par cons\'equent, $\hmu_{\max}^{\mathrm{asy}}(\overline E_\sbullet)$ est \'egal \`a $\displaystyle\lim_{n\rightarrow+\infty}\hmu_{\max}(\overline E_n)/n.$}. En outre, si $E_\sbullet$ contient un diviseur ample et si $\widehat{\mu}_{\max}^{\mathrm{asy}}(\overline E_\sbullet)<+\infty$, il est d\'emontr\'e dans \cite[th\'eor\`eme 2.8]{Boucksom_Chen} que la suite \[\frac{(d+1)!}{n^{d+1}}\sum_{i=1}^{\rang(E_n)}\max(\Lambda_i(\overline E_n),0),\qquad n\geqslant 1.\] converge vers un nombre r\'eel que l'on notera comme $\hnvol(\overline E_\sbullet)$.

\begin{theo}\label{Thm:HMarithmetique}
Soit $X$ un sch\'ema projectif et g\'eom\'etriquement int\`egre sur $\Spec K$,  $L$ un faisceau inversible gros sur $X$ et $\overline E_\sbullet=\bigoplus_{n\geqslant 0}\overline E_n$ un syst\`eme lin\'eaire gradu\'e de $L$ en fibr\'es vectoriels ad\'eliques. On suppose que le syst\`eme lin\'eaire gradu\'e $E_\sbullet$ contient un diviseur ample et que $X_{K^{\mathrm{a}}}$ poss\`ede un tour de fibrations sur courbes $\Theta$. Alors on a
\begin{equation}
\sum_{i=1}^{\rang(E_1)}\max(\Lambda_i(\overline E_1),0)\leqslant \hnvol(\overline E_\sbullet)+\hmu_{\max}^{\mathrm{asy}}(\overline E_\sbullet)\varepsilon^{\Theta}(E_{\sbullet,K^{\mathrm{a}}}).
\end{equation}
\end{theo}
\begin{proof}
Pour tout entier $n\in\mathbb N$ et tout nombre r\'eel $t$, on d\'esigne par $\mathcal F^tE_{n,K^{\mathrm{a}}}$ le sous-$K^{\mathrm{a}}$-espace vectoriel engendr\'e par les vecteurs non-nuls $s\in E_{n,K^{\mathrm{a}}}$ tels que $h_{\overline E_n}(s)\leqslant -t$. Il s'av\`ere que $(\mathcal F^tE_{n,K^{\mathrm{a}}})_{t\in\mathbb R}$ est une $\mathbb R$-filtration d\'ecroissante de $E_{n,K^{\mathrm{a}}}$. En outre, la relation \eqref{Equ:suradditive} montre que 
\[(\mathcal F^{t_1}E_{n_1,K^{\mathrm{a}}})\cdot(\mathcal F^{t_2}E_{n_2,K^{\mathrm{a}}})\subset\mathcal F^{t_1+t_2}E_{n_1+n_2,K^{\mathrm{a}}}\]
pour tous $(n_1,n_2)\in\mathbb N^2$ et $(t_1,t_2)\in\mathbb R^2$. Pour tout $t\in\mathbb R$, $E^t_{\sbullet}:=\bigoplus_{n\geqslant 0}\mathcal F^{nt}E_{n,K^{\mathrm{a}}}$ est alors un syst\`eme lin\'eaire gradu\'e de $L_{K^{\mathrm{a}}}$. D'apr\`es \cite[lemme 1.6]{Boucksom_Chen}, ce syst\`eme lin\'eaire gradu\'e contient un diviseur ample d\`es que $t<\hmu_{\max}^{\mathrm{asy}}(\overline E_\sbullet)$. En outre, on a (d'apr\`es le corollaire 1.13 du \emph{loc. cit.})
\[\hnvol(\overline E_\sbullet)=(d+1)\int_0^{+\infty}\vol(E^t_\sbullet)\,\mathrm{d}t=(d+1)\int_0^{\hmu_{\max}^{\mathrm{asy}}(\overline E_\sbullet)}\vol(E^t_\sbullet)\,\mathrm{d}t,\]
o\`u $d$ est la dimension de $X$. Le th\'eor\`eme \ref{Thm:majorationdeHS} appliqu\'e \`a $E_\sbullet^t$ montre que 
\[\rang(\mathcal F^tE_{1,K^{\mathrm{a}}})\leqslant\vol(E^t_{\sbullet,K^{\mathrm{a}}})+\varepsilon^{\Theta}(E_{\sbullet,K^{\mathrm{a}}}^t)\leqslant\vol(E^t_{\sbullet,K^{\mathrm{a}}})+\varepsilon^{\Theta}(E_{\sbullet,K^{\mathrm{a}}}).\]
On en d\'eduit
\[\begin{split}&\quad\;\sum_{i=1}^{\rang(E_1)}\max(\Lambda_i(\overline E_1),0)=\int_0^{\Lambda_1(\overline E_1)}\rang(\mathcal F^tE_{1,K^{\mathrm{a}}})\,\mathrm{d}t=\int_0^{\hmu_{\max}^{\mathrm{asy}}(\overline E_\sbullet)}\rang(\mathcal F^tE_{1,K^{\mathrm{a}}})\,\mathrm{d}t\\
&\leqslant \int_0^{\hmu_{\max}^{\mathrm{asy}}(\overline E_\sbullet)}\frac{\vol(E_\sbullet^t)}{d!}\,\mathrm{d}t+\hmu_{\max}^{\mathrm{asy}}(\overline E_\sbullet)\varepsilon^{\Theta}(E_{\sbullet,K^{\mathrm{a}}})=\frac{\hnvol(\overline E_\bullet)}{(d+1)!}+\hmu_{\max}^{\mathrm{asy}}(\overline E_\sbullet)\varepsilon^{\Theta}(E_{\sbullet,K^{\mathrm{a}}}),
\end{split}\]
o\`u la deuxi\`eme \'egalit\'e provient du fait que $\Lambda_1(\overline E_1)\leqslant\hmu_{\max}^{\mathrm{asy}}(\overline E_\sbullet)$. La d\'emonstration est donc achev\'ee.
\end{proof}

On d\'eduit du th\'eor\`eme pr\'ec\'edent et l'in\'egalit\'e \eqref{Equ:Siegeltronquenonhem} le r\'esultat suivant.

\begin{coro}\label{Cor:majrationdemu}
Avec les notations du th\'eor\`eme pr\'ec\'edent, on a 
\begin{equation}
\sum_{i=1}^{\rang(E_1)}\max(\hmu_i(\overline E_1),0)\leqslant\frac{\hnvol(\overline E_\sbullet)}{(d+1)!}+\hmu_{\max}^{\mathrm{asy}}(\overline E_\sbullet)\varepsilon^{\Theta}(E_{\sbullet,K^{\mathrm{a}}})+\rang(E_1)\ln(\rang(E_1)).
\end{equation}
\end{coro}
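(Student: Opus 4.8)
The plan is to obtain the statement by chaining together the two ingredients announced just before it: the arithmetic Hilbert–Samuel estimate for the absolute minima established in Théorème~\ref{Thm:HMarithmetique}, and the truncated absolute Siegel inequality \eqref{Equ:Siegeltronquenonhem}, which compares the successive slopes $\hmu_i(.)$ with the successive absolute minima $\Lambda_i(.)$ of an arbitrary (not necessarily hermitian) fibré vectoriel adélique. Since $\overline E_1$ is exactly such a general adelic vector bundle, both tools apply directly to it, and no further geometric input is needed beyond the hypotheses already carried over from Théorème~\ref{Thm:HMarithmetique} (namely that $E_\sbullet$ contains an ample divisor and that $X_{K^{\mathrm a}}$ admits the tower $\Theta$).

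First I would apply \eqref{Equ:Siegeltronquenonhem} to the single fibré vectoriel adélique $\overline E_1$, of rank $\rang(E_1)$, which yields
\[
\sum_{i=1}^{\rang(E_1)}\max(\hmu_i(\overline E_1),0)\leqslant\sum_{i=1}^{\rang(E_1)}\max(\Lambda_i(\overline E_1),0)+\rang(E_1)\ln(\rang(E_1)).
\]
Then I would bound the sum of the positive absolute minima on the right-hand side by invoking Théorème~\ref{Thm:HMarithmetique}, whose conclusion (in the sharp form obtained in its proof) reads
\[
\sum_{i=1}^{\rang(E_1)}\max(\Lambda_i(\overline E_1),0)\leqslant\frac{\hnvol(\overline E_\sbullet)}{(d+1)!}+\hmu_{\max}^{\mathrm{asy}}(\overline E_\sbullet)\,\varepsilon^{\Theta}(E_{\sbullet,K^{\mathrm a}}).
\]
Substituting the second inequality into the first produces precisely the claimed bound.

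There is no genuine obstacle here: the corollary is a formal consequence of the two quoted inequalities, and the only point worth flagging is the bookkeeping of the error term. The additive loss $\rang(E_1)\ln(\rang(E_1))$ is exactly the distortion incurred when passing from the minima $\Lambda_i(\overline E_1)$ to the slopes $\hmu_i(\overline E_1)$ via the John–Löwner ellipsoid comparison underlying \eqref{Equ:Siegeltronquenonhem}. It is this step, and not the arithmetic Hilbert–Samuel estimate itself, that is responsible for the coefficient $\rang(E_1)$ in front of the logarithm and for the appearance of the full rank (with constant $1$ rather than the $\frac{1}{2}$ of the hermitian inequality \eqref{Equ:Siegeltronque}) in the non-hermitian setting.
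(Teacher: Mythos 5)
Your proof is correct and is exactly the paper's argument: the paper deduces the corollary by chaining Théorème~\ref{Thm:HMarithmetique} (in the sharp form with the $(d+1)!$ denominator, as obtained in its proof) with the non-hermitian comparison \eqref{Equ:Siegeltronquenonhem} applied to $\overline E_1$, precisely as you do. Your remark on the provenance of the $\rang(E_1)\ln(\rang(E_1))$ term (the John--L\"owner distortion, with constant $1$ instead of the hermitian $\tfrac12$) is also accurate.
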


Soit $\pi:\mathscr X\rightarrow\Spec\mathcal O_K$ un morphisme projectif et plat d'un sch\'ema int\`egre $\mathscr X$ vers $\Spec\mathcal O_K$. Par faisceau inversible hermitien sur $\mathscr X$, on entend un $\mathcal O_{\mathscr X}$-module inversible $\mathscr L$ dont le tire en arri\`ere sur $\mathscr X^{\mathrm{an}}$ est muni d'une m\'etrique continue qui est invariante par la conjugaison complexe, o\`u $\mathscr X^{\mathrm{an}}$ d\'esigne l'espace analytique complexe associ\'e \`a $\mathscr X\otimes_{\mathbb Z}\mathbb C$. \'Etant donn\'e un faisceau inversible hermitien $\overline{\mathscr L}$ sur $\mathscr X$, on peut construire une structure de fibr\'e vectoriel ad\'elique sur $H^0(X,\mathscr L_K)$. En une place finie $\mathfrak p$,  la norme $\|.\|_{\mathfrak p}$ sur $H^0(X,\mathscr L_K)\otimes_K{\mathbb C_{\mathfrak p}}$ provient de la structure de $\mathcal O_K$-module de $\pi_*(\mathscr L)$~: la boule unit\'e ferm\'e pour la norme $\|.\|_{\mathfrak p}$ s'identifie \`a $\pi_*(\mathscr L)\otimes_{\mathcal O_K}\mathcal O_{\mathfrak p}$, o\`u $\mathcal O_{\mathfrak p}$ d\'esigne l'anneau de valuation de $\mathbb C_{\mathfrak p}$. En une place infinie $\sigma:K\rightarrow\mathbb C$, la norme $\|.\|_{\sigma}$ est la norme sup~: pour tout \'el\'ement $s\in H^0(X,\mathscr L_K)\otimes_{K,\sigma}\mathbb C$, on a
\[\|s\|_\sigma:=\sup_{x\in\mathscr X_\sigma(\mathbb C)}\|s(x)\|.\]
On utilise l'expression $\pi_*(\overline{\mathscr L})$ pour d\'esigner ce fibr\'e vectoriel ad\'elique. Ainsi \[\overline E_\sbullet=\bigoplus_{n\geqslant 0}\pi_*(\overline{\mathscr L}{}^{\otimes n})\] devient un syst\`eme lin\'eaire gradu\'e en fibr\'es vectoriels ad\'eliques sur $\Spec K$. Il s'av\`ere que le nombre $\hnvol(\overline E_\sbullet)$ d\'ecrit le comportement asymptotique du nombre de sections effectives de $\overline{\mathscr L}{}^{\otimes n}$ lorsque $n\rightarrow+\infty$. En effet, en utilisant l'in\'egalit\'e \eqref{Equ:h0degset} et la m\'ethode d'ellipso\"{\i}de de John-L\"owner, on peut montrer que
\[\hnvol(\overline{E}_\sbullet)=\frac{1}{[K:\mathbb Q]}\lim_{n\rightarrow+\infty}\frac{\widehat{h}^0(\overline E_n)}{n^{d+1}/(d+1)!},\]
o\`u $d$ est la dimension relative de $\pi$. Rappelons que la limite figurant dans le terme de droite de la formule est appel\'e le \emph{volume arithm\'etique}, not\'e comme $\hvol(\overline{\mathscr L})$. On renvoie les lecteurs dans \cite{Moriwaki07} o\`u cette notion a \'et\'e propos\'ee. Dans le cas o\`u le faisceau inversible hermitien $\overline{\mathscr L}$ est arithm\'etiquement nef\footnote{C'est-\`a-dire que $\mathscr L$ est nef relativement \`a $\pi$, la m\'etrique de $\overline{\mathscr L}$ est pluri-sous-harmonique, et la fonction hauteur sur l'ensemble des points alg\'ebriques de $\mathscr X_K$ d\'efinie par $\overline{\mathscr L}$ est \`a valeurs positives.}, le volume arithm\'etique de $\overline{\mathscr L}$ s'identifie au nombre d'auto-intersection arithm\'etique $\widehat{c}_1(\overline{\mathscr L})^{d+1}$. On d\'eduit alors du corollaire \ref{Cor:majrationdemu} le r\'esultat suivant.

\begin{theo}\label{Thm:HSarithmetic}
Soit $\pi:\mathscr X\rightarrow\Spec\mathcal O_K$ un morphisme projectif et plat. On suppose que la fibre g\'en\'erique g\'eom\'etrique $\mathscr X_{K^{\mathrm{a}}}$ est g\'eom\'etriquement int\`egre et admet un tour de fibrations sur courbes $\Theta$. Alors, pour tout faisceau inversible hermitien $\overline{\mathscr L}$ sur $\mathscr X$ qui est arithm\'etiquement nef et g\'en\'eriquement gros, il existe une fonction explicite $F_{\overline{\mathscr L}}:\mathbb N\rightarrow [0,+\infty[$ telle que \begin{equation}\label{Equ:estimationdeF}F_{\overline{\mathscr L}}(n)=[K:\mathbb Q]\frac{c_1(\mathscr L_K)^d}{(d-1)!}n^d\ln(n)+O(n^d),\quad n\rightarrow+\infty,\end{equation}
et que
\begin{equation}\label{Equ:estdeg}\hdeg(\pi_*(\overline{\mathscr L}{}^{\otimes n}))\leqslant\frac{\widehat{c}_1(\overline{\mathscr L})^{d+1}}{(d+1)!}n^{d+1}+F_{\overline{\mathscr L}}(n),\end{equation}
o\`u $d$ est la dimension relative de $\pi$.
\end{theo}
\begin{proof}
Pour tout entier $n\geqslant 1$, on note $r_n=\rang(\pi^*(\mathscr L^{\otimes n}))$. Comme $\mathscr L_K$ est nef et gros, on a (cf. \cite[corollaire 1.4.38]{LazarsfeldI})
\begin{equation}\label{Equ:estrn}r_n=\frac{c_1(\mathscr L_K)^d}{d!}n^d+O(n^{d-1}).\end{equation}
Soit $\overline E_{\sbullet}=\bigoplus_{n\geqslant 0}\pi_*(\overline{\mathscr L}{}^{\otimes n})$. Compte tenu de \cite[lemme 2.6]{Boucksom_Chen}, la relation $\hmu_{\max}^{\mathrm{asy}}(\overline E_\sbullet)<+\infty$ est satisfaite.  D'apr\`es le corollaire \ref{Cor:majrationdemu}, on obtient
\[\hdeg(\pi_*(\overline{\mathscr L}{}^{\otimes n}))\leqslant\sum_{i=1}^{r_n}\max(\hmu_i(\overline E_n),0)\leqslant [K:\mathbb Q]\hnvol(\overline E_\sbullet)\frac{n^{d+1}}{(d+1)!}+F_{\overline{\mathscr L}}(n)\]
avec
\[F_{\overline{\mathscr L}}(n)=[K:\mathbb Q]\big(n\widehat{
\mu}_{\max}^{\mathrm{asy}}(\overline{E}_\sbullet)\varepsilon^{\Theta}(E^{(n)}_{\sbullet,K^{\mathrm{a}}})+r_n\ln(r_n)\big)\] 
Comme $\varepsilon^{\Theta}(E_{\sbullet,K^{\mathrm{a}}}^{(n)})\leqslant n^{d-1}\varepsilon^{\Theta}(E_{\sbullet,K^{\mathrm{a}}})$, on d\'eduit de \eqref{Equ:estrn} la relation \eqref{Equ:estimationdeF}. Enfin, comme $\hvol(\overline{\mathscr L})=[K:\mathbb Q]\hnvol(\overline{E}_\sbullet)$, l'in\'egalit\'e \eqref{Equ:estdeg} est d\'emontr\'ee. 
\end{proof}

\begin{rema}
Si on remplace $\hdeg(.)$ par la somme des minima absolus successifs dans le th\'eor\`eme pr\'ec\'edent, on peut obtenir une majoration asymptotique o\`u le terme d'erreur est $O(n^d)$. Plus pr\'ecis\'ement, il existe une fonction explicite $\widetilde F_{\overline{\mathscr L}}:\mathbb N\rightarrow[0,+\infty[$ telle que $\widetilde F_{\overline{\mathscr L}}(n)=O(n^d)$ et que 
\[\sum_{i=1}^{r_n}\Lambda_i(\pi_*(\overline{\mathscr L}{}^{\otimes n}))\leqslant \frac{\widehat{c}_1(\overline{\mathscr L})^{d+1}}{(d+1)!}n^{d+1}+\widetilde F_{\overline{\mathscr L}}(n). \]
Cela sugg\`ere que le terme sous-dominant (de l'ordre $n^d\ln(n)$) dans le th\'eor\`eme de Riemann-Roch arithm\'etique (cf. \cite[\S2.2]{Soule92}) peut provenir du choix de m\'etrique et de la comparaison entre certains invariants arithm\'etiques de fibr\'e vectoriel hermitien sur $\Spec\mathcal O_K$. Des ph\'enom\`enes similaires existent aussi dans l'\'etude   des surfaces arithm\'etiques, comme par exemple \cite[th\'eor\`eme 3]{Faltings84} (voir aussi \cite[\S5.1]{Abbes-Bouche}).
\end{rema}

Dans la suite, on \'etablit un analogue du th\'eor\`eme \ref{Thm:HSarithmetic} pour la fonction $\widehat{h}^0$. En utilisant le deuxi\`eme th\'eor\`eme de Minkowski et la filtration par les minima (usuels), on peut majorer $\widehat{h}^0(\pi_*(\overline{\mathscr L}{}^{\otimes n}))$ par une fonction explicite. 

Soit $\overline M$ un fibr\'e vectoriel norm\'e sur $\Spec\mathbb Z$ (i.e. un r\'eseau dans un espace vectoriel norm\'e de dimension fini sur $\mathbb R$). Pour tout entier $i\in\{1,\ldots,n\}$, on d\'efinit le $i^{\text{\`eme}}$ \emph{minimum logarithmique} de $\overline M$ comme 
\[\lambda_i(\overline M):=\sup\Big\{t\in\mathbb R\,\Big|\,\rang_{\mathbb Q}\big(\mathrm{Vect}_{\mathbb Q}\{s\in M\,|\,\|s\|\leqslant\mathrm{e}^{-t}\}\big)\geqslant i\Big\}.\]
Rappelons que la caract\'eristique d'Euler-Poincar\'e de $\overline M$ est d\'efini comme
\[\chi(\overline M)=\ln\frac{\vol(B(M_{\mathbb R},\|.\|))}{\mathrm{covol}(M)},\]
o\`u $B(M_{\mathbb R},\|.\|)$ d\'esigne la boule unit\'e ferm\'ee dans $M_{\mathbb R}$, $\vol$ est une mesure de Haar sur $M_{\mathbb R}$ et $\mathrm{covol}(M)$ est la mesure de $M_{\mathbb R}/M$ par rapport \`a la mesure induite par $\vol$.

\begin{lemm}
Soit $\overline M$ un r\'eseau de rang $r>0$ dans un espace vectoriel norm\'e. On a
\begin{equation}
\widehat{h}^0(\overline M)\leqslant \sum_{i=1}^r\max(\lambda_i(\overline M),0)+r\ln(2)+\ln(2r!).
\end{equation}
\end{lemm}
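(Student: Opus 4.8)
The plan is to read off that $\widehat h^0(\overline M)=\ln N$, where $N=\#\{s\in M:\|s\|\leqslant 1\}$ is the number of lattice points of $M$ contained in the closed unit ball $B=B(M_{\mathbb R},\|\cdot\|)$. Since $\|\cdot\|$ is a norm on the finite-dimensional space $M_{\mathbb R}$, the body $B$ is a symmetric convex body of full dimension $r$, so the geometry of numbers applies directly. Denoting by $\mu_1\leqslant\cdots\leqslant\mu_r$ the successive minima of $B$ relative to $M$ (that is, $\mu_i$ is the least $\rho>0$ for which $\{s\in M:\|s\|\leqslant\rho\}$ spans a subspace of dimension $\geqslant i$), the very definition of the logarithmic minima gives $\lambda_i(\overline M)=-\ln\mu_i$, whence $\prod_{i=1}^r\mu_i^{-1}=\exp\big(\sum_{i=1}^r\lambda_i(\overline M)\big)$.

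I would then invoke the two classical results already flagged in the introduction. Blichfeldt's inequality \cite{Blichfeld14} bounds the number of lattice points in a convex body by its normalized volume,
\[N\leqslant r!\,\frac{\vol(B)}{\mathrm{covol}(M)}+r,\]
while the second theorem of Minkowski controls this normalized volume by the product of the successive minima,
\[\mu_1\cdots\mu_r\,\vol(B)\leqslant 2^r\,\mathrm{covol}(M),\qquad\text{i.e.}\qquad \frac{\vol(B)}{\mathrm{covol}(M)}\leqslant 2^r\prod_{i=1}^r\mu_i^{-1}.\]
Combining the two yields $N\leqslant r!\,2^r\prod_{i=1}^r\mu_i^{-1}+r$.

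To conclude I would pass to logarithms. Because $\lambda_i(\overline M)\leqslant\max(\lambda_i(\overline M),0)$ for every $i$, one has $\prod_{i=1}^r\mu_i^{-1}=\exp\big(\sum_i\lambda_i(\overline M)\big)\leqslant\exp\big(\sum_i\max(\lambda_i(\overline M),0)\big)$; moreover $r\leqslant r!\leqslant r!\,2^r\exp\big(\sum_i\max(\lambda_i(\overline M),0)\big)$, so the additive term $r$ of Blichfeldt's bound is absorbed, giving
\[N\leqslant 2\,r!\,2^r\exp\Big(\sum_{i=1}^r\max(\lambda_i(\overline M),0)\Big).\]
Taking logarithms produces exactly $\widehat h^0(\overline M)\leqslant\sum_{i=1}^r\max(\lambda_i(\overline M),0)+r\ln 2+\ln(2r!)$.

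The mathematical content is carried entirely by the two cited theorems; there is no serious obstacle, only points requiring care. The main one is the dictionary $\lambda_i=-\ln\mu_i$ between the logarithmic minima used in the paper and the geometric successive minima entering Minkowski's and Blichfeldt's statements, together with the observation that $\prod_i\mu_i^{-1}=\exp(\sum_i\lambda_i)\leqslant\exp(\sum_i\max(\lambda_i,0))$ holds without any need to restrict to the subspace spanned by the short vectors. One must also check that $B$ is a genuine full-dimensional symmetric convex body so that both theorems apply verbatim (the degenerate case $N=1$, where $B$ meets $M$ only at the origin, making the inequality trivial), and keep track of how the factorial from Minkowski's theorem and the remainder $r$ from Blichfeldt's combine into the constant $\ln(2r!)+r\ln 2$.
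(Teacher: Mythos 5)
Your proof is correct and follows essentially the same route as the paper: Blichfeldt's lattice-point bound for the symmetric convex body $B(M_{\mathbb R},\|\cdot\|)$ combined with the upper inequality of Minkowski's second theorem, then passing to logarithms. The only (harmless) difference is in how the additive term $+r$ from Blichfeldt is absorbed: the paper first reduces to the sublattice generated by the vectors of norm at most $1$ (so that every $\lambda_i(\overline M)\geqslant 0$) and uses the lower Minkowski inequality to see that $r!\exp(\chi(\overline M))\geqslant 2^r>r$, whereas you absorb it directly via $r\leqslant r!\leqslant r!\,2^r\exp\bigl(\sum_{i=1}^r\max(\lambda_i(\overline M),0)\bigr)$, which lets you skip the reduction altogether.
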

\begin{proof}
Quitte \`a remplacer $M$ par le sous-r\'eseau engendr\'e par les \'el\'ements $s\in M$ v\'erifiant $\|s\|\leqslant 1$, on peut supposer que $\lambda_i(\overline M)\geqslant 0$ pour tout $i\in\{1,\ldots,r\}$. Rappelons que le deuxi\`eme th\'eor\`eme de Minkowski montre que
\[r\ln(2)-\ln(r!)\leqslant\chi(\overline M)-\sum_{i=1}^r\lambda_i(\overline M)\leqslant r\ln(2).\]
En outre, si on fixe une base de $M$ sur $\mathbb Z$ et identifie $M_{\mathbb R}$ \`a $\mathbb R^r$ via cette base, alors $B(M_{\mathbb R},\|.\|)$ est un corps convexe sym\'etrique dans $\mathbb R^r$ dont le volume est $\chi(\overline{M})$ (o\`u on a consid\'er\'e la mesure de Haar standard sur $\mathbb  R^r$). D'apr\`es un r\'esultat de Blichfeldt (cf. \cite[page 372]{Henze13}), on a 
\begin{equation}\label{Equ:Minkowski2}\widehat{h}^0(\overline M)\leqslant \ln(r!\exp(\chi(\overline M))+r)\leqslant \ln(2r!)+\chi(\overline M),\end{equation}
o\`u la deuxi\`eme in\'egalit\'e provient de l'hypoth\`ese $\forall\,i,\;\lambda_i(\overline M)\geqslant 0$. En effet, sous cette hypoth\`ese on a $r!\exp(\chi(\overline M))\geqslant 2^r> r$. D'apr\`es la deuxi\`eme in\'egalit\'e de \eqref{Equ:Minkowski2}, on obtient le r\'esultat.
\end{proof}

\begin{theo}\label{Thm:majorationh0}
Soit $\pi:\mathscr X\rightarrow\Spec\mathcal O_K$ un morphisme projectif et plat. On suppose que la fibre g\'en\'erique $\mathscr X_K$ est int\`egre. Alors, pour tout faisceau inversible hermitien $\overline{\mathscr L}$ sur $\mathscr X$ tel que $\mathscr L_K$ soit gros, il existe une fonction explicite $G_{\overline{\mathscr L}}:\mathbb N\rightarrow [0,+\infty[$ telle que \begin{equation}\label{Equ:estimationdeG}G_{\overline{\mathscr L}}(n)=[K:\mathbb Q]\frac{\vol(\mathscr L_K)}{(d-1)!}n^d\ln(n)+O(n^d),\quad n\rightarrow+\infty,\end{equation}
et que
\begin{equation}\label{Equ:estdegG}\widehat{h}^0(\pi_*(\overline{\mathscr L}{}^{\otimes n}))\leqslant\frac{\hvol(\overline{\mathscr L})}{(d+1)!}n^{d+1}+G_{\overline{\mathscr L}}(n),\end{equation}
o\`u $d$ est la dimension relative de $\pi$.
\end{theo}
\begin{proof}
Pour tout entier $n\geqslant 1$, on d\'esigne par $R_n$ le rang de $\pi_*(\mathscr L^{\otimes n})$ sur $\mathbb Z$. Comme $\mathscr L_K$ est gros, d'apr\`es le th\'eor\`eme \ref{Thm:majorationdeHS} on a
\begin{equation}
\label{Equ:estRn}
R_n=[K:\mathbb Q]\frac{\vol(\mathscr L_K)}{d!}n^d+o(n^{d-1}). 
\end{equation}
Soit $\overline E_\sbullet=\bigoplus_{n\geqslant 0}{\pi_*(\overline{\mathscr L}{}^{\otimes n})}$. On consid\`ere chaque $\overline E_n$ comme un r\'eseau dans l'espace vectoriel norm\'e $E_n\otimes_{\mathbb Q}\mathbb R$, qui s'identifie \`a 
\[\bigoplus_{v\in M_{K,\infty}}E_n\otimes_{K}K_v.\]
Si $\boldsymbol{s}=(s_v)_{v\in M_{K,\infty}}$ est un \'el\'ement de $E_n\otimes_{\mathbb Z}\mathbb R$, la norme de $\boldsymbol{s}$ est d\'efinie comme
\[\max_{v\in M_{K,\infty}}\|s_v\|_v.\]
On consid\`ere $\mathscr X_K$ comme un sch\'ema projectif sur $\Spec\mathbb Q$ (que l'on notera comme $\mathscr X_{\mathbb Q}$ dans la suite). De m\^eme, on consid\`ere $\mathscr L_K$ comme un faisceau inversible sur $\mathscr X_{\mathbb Q}$ (que l'on notera comme $\mathscr L_{\mathbb Q}$). Ainsi $E_\sbullet$ devient le syst\`eme lin\'eaire gradu\'e total de $\mathscr L_{\mathbb Q}$. Pour chaque entier $n\geqslant 0$, on munit $E_n$ (comme espace vectoriel sur $\mathbb Q$) de la $\mathbb R$-filtration $\mathcal F$ par les minima~:
\[\mathcal F^t(E_n)=\mathrm{Vect}_{\mathbb Q}\{s\in\pi_*(\mathscr L^{\otimes n})\,:\,\|s\|\leqslant \mathrm{e}^{-t}\}.\]  
Quitte \`a passer \`a une modification birationnelle (la quantit\'e $\widehat{h}^0(\pi_*(\overline{\mathscr L}{}^{\otimes n}))$ augment \'eventuellement, tandis que $\hvol(\overline{\mathscr L})$ reste inchang\'e), on peut supposer que $\mathscr X_{\mathbb Q}$ admet un tour de fibrations sur courbes $\Theta$. D'apr\`es le corollaire \ref{Cor:majHS} (appliqu\'e \`a $\overline{\mathscr L}{}^{\otimes n}$), on obtient
\[\sum_{i=1}^{R_n}\max(\lambda_i(\overline E_n),0)\leqslant\frac{\hvol(\overline{\mathscr L})}{(d+1)!}n^{d+1}+n^d\hmu_{\max}^{\mathrm{asy}}(\overline E_\sbullet)\varepsilon^{\Theta}(E_\sbullet),\]
o\`u on a utilis\'e les relations $\hmu_{\max}^{\mathrm{asy}}(\overline E_\sbullet^{(n)})=n\hmu_{\max}^{\mathrm{asy}}(\overline E_\sbullet)$ et $\varepsilon^{\Theta}(E_\sbullet^{(n)})\leqslant n^{d-1}\varepsilon^{\Theta}(E_\sbullet)$.
D'apr\`es le lemme pr\'ec\'edent, on obtient
\[\sum_{i=1}^{R_n}\max(\lambda_i(\overline E_n),0)\leqslant\frac{\hvol(\overline{\mathscr L})}{(d+1)!}n^{d+1}+G_{\overline{\mathscr L}}(n)\]
avec 
\[G_{\overline{\mathscr L}}(n)=n^d\hmu_{\max}^{\mathrm{asy}}(\overline E_\sbullet)\varepsilon^{\Theta}(E_\sbullet)+(R_n+1)\ln(2)+R_n\ln(R_n).\]
Enfin, la relation \eqref{Equ:estRn} montre aussit\^ot que
\[G_{\overline{\mathscr L}}(n)=[K:\mathbb Q]\frac{\vol(\mathscr L_K)}{(d-1)!}n^d\ln(n)+O(n^d).\]
Le th\'eor\`eme est donc achev\'e.
\end{proof}

\begin{rema}\label{Rem:comparaisonYuanZhang}
Dans le cas o\`u $\mathscr X$ est une surface arithm\'etique, il est int\'eressant de comparer le th\'eor\`eme pr\'ec\'edent \`a la majoration obtenu dans \cite{Yuan_Zhang13} (voir aussi l'analogue de ce travail dans le cadre de corps de fonctions \cite{Yuan_Zhang13b}). Asymptotiquement le terme $G_{\overline{\mathscr L}}(n)$ est meilleur que le terme d'erreur \begin{equation}\label{Equ:yuan-zhang}4n[K:\mathbb Q]\deg(\mathscr L_K)\ln(n[K:\mathbb Q]\deg(\mathscr L_K))\end{equation}
dans le th\'eor\`eme A du \emph{loc. cit.}. Cependant le terme \eqref{Equ:yuan-zhang} ne d\'epend que de l'information g\'eom\'etrique de la fibre g\'en\'erique $\mathscr L_K$. On se demande si une combinaison des deux m\'ethodes ne donne pas  une majoration effective de la somme des minima logarithmiques positifs de $\pi_*(\overline{\mathscr L}{}^n)$ dont le terme d'erreur est d'ordre $n^d$ et ne d\'epend que de la g\'eom\'etrie de $\mathscr L_K$ dans le cas o\`u $\mathscr X$ est une surface arithm\'etique.
\end{rema}

\section{Le cas de caract\'eristique positif}

Dans ce paragraphe, on \'etablit l'analogue du th\'eor\`eme \ref{Thm:majorationdeHS} dans le cas o\`u la caract\'eristique du corps de base est strictement positif. Soit $k$ un corps de caract\'eristique quelconque. La conclusion de la proposition \ref{Pro:produitdefiltrationHN}, qui est \'equivalente \`a la semi-stabilit\'e du produit tensoriel de tout couple de fibr\'es vectoriels semi-stables sur la courbe projective r\'eguli\`ere d\'efinie sur $k$, n'est cependant pas vrai en g\'en\'eral. On renvoie les lecteurs dans \cite{Gieseker73} pour un contre-exemple. La m\'ethode de $\mathbb R$-filtration de Harder-Narasimhan que l'on a d\'evelopp\'ee dans \S\,\ref{Sec:HS} n'est plus valable dans ce cadre-l\`a. On propose d'utiliser les minima successifs dans le cadre de corps de fonction pour surmonter cette difficult\'e.  

Soient $C$ une courbe projective et r\'eguli\`ere sur $\Spec k$. On d\'esigne par $K$ le corps des fonctions rationnelles sur $C$. Si $E$ est un fibr\'e vectoriel sur $C$, on d\'esigne par $\mathcal O_E(1)$ le faisceau inversible universel du sch\'ema $\mathbb P(E)$. On peut d\'efinir une fonction de hauteur sur l'ensemble des $k$-points de $\mathbb P(E)$ \`a valeurs dans $K$ comme la suite. Si $x$ est un point dans $\mathbb P(E)_k(K)$, il se prolonge en une section $\mathscr P_x:C\rightarrow\mathbb P(E)$ de $\mathbb P(E)$. On d\'efinit la hauteur $x$ comme
\[h_E(x):=\deg(\mathscr P_x^*\mathcal O_E(1)).\]
La fonction de hauteur nous permet de d\'efinir une filtration $\mathcal F$ sur l'espace $K$-vectoriel $E_{K}$ (la fibre g\'en\'erique de $E$) comme la suite\footnote{Rappelons qu'un \'el\'ement dans $\mathbb P(E^\vee)_k(K)$ correspond \`a un sous-espace $K$-vectoriel de rang $1$ de $E_{K}$.} 
\begin{equation}\label{Equ:filtrationparminima}\mathcal F^t(E_{K}):=\mathrm{Vect}_{K}\big\{x\in\mathbb P(E^\vee)_k(K)\,|\,h_{E^{\vee}}(x)\leqslant -t\big\}.\end{equation} 
Pour tout entier $i\in\{1,\ldots,\rang(E)\}$, on d\'esigne par $\lambda_i(E)$ le plus grand nombre r\'eel $t$ tel que $\rang(\mathcal F^t(E_{K}))\geqslant i$. Les nombres $\lambda_i(E)$ et la filtration $\mathcal F$ sont reli\'ees par la formule suivante~:
\begin{equation}\label{Equ:lambdaint}
\sum_{i=1}^{\rang(E)}\max(\lambda_i(E),0)=\int_0^{+\infty}\rang_K(\mathcal F^t(E_K))\,\mathrm{d}t=\int_0^{\lambda_1(E)}\rang_K(\mathcal F^t(E_K))\,\mathrm{d}t.
\end{equation} 

Les quantit\'es $\lambda_i(E) $ devraient \^etre consid\'er\'ees comme l'analogue des minima successifs dans le cadre de corps de fonction. On d\'esigne par $M_K$ l'ensemble des points ferm\'es dans la courbe $C$, consid\'er\'e comme l'ensemble des places du corps de fonctions $K$. Pour tout point $v\in M_K$, on d\'esigne par $|.|_v$ la valeur absolue sur $K$ d\'efinie comme
\[\forall\,f\in K^{\times}\,\quad |f|_v=\exp(-[k(v):k]\mathrm{ord}_v(f)),\]
o\`u $k(v)$ est le corps r\'esiduel de $v$ et $\mathrm{ord}_v(f)$ d\'esigne l'ordre d'annulation de $f$ en $v$. Comme dans le cas de corps de nombres, on d\'esigne par $K_v$ le compl\'et\'e de $K$ par rapport \`a cette valeur absolue et $\mathbb C_v$ le compl\'et\'e d'une cl\^oture alg\'ebrique de $K_v$, sur lequel la valeur absolue $|.|_v$ s'\'etend de fa\c{c}on unique. On d\'esigne par $\mathcal O_v$ l'anneau de valuation de $\mathbb C_v$ par rapport \`a cette valeur absolue (qui est non-archim\'edienne).

Si $E$ est un fibr\'e vectoriel sur $C$, alors sa structure de $\mathcal O_C$-module d\'efinit, pour chaque place $v\in M_K$, une norme $\|.\|_{E,v}$ sur $E\otimes_{\mathcal O_C}\mathbb C_v$ dont la boule unit\'e ferm\'ee est $E\otimes_{\mathcal O_C}\mathcal O_v$. Cette norme est invariante par l'action du groupe de Galois $\mathrm{Gal}(\mathbb C_v/K_v)$. On obtient alors un fibr\'e vectoriel ad\'elique sur $\Spec K$ au sens de \cite[\S3]{Gaudron08}, et il s'av\`ere que l'on peut exprimer le degr\'e de $E$ sous la forme
\[\deg(E)=-\sum_{v\in M_K}\ln\|s_1\wedge\cdots\wedge s_r\|_{E,v},\]
o\`u $(s_1,\ldots,s_r)$ est une base quelconque de $E_K$. En outre, les nombres $\lambda_i(E)$ sont pr\'ecis\'ement les minima successifs logarithmiques suivant Thunder \cite{Thunder96} dans le cadre de corps de fonctions.

D'apr\`es un r\'esultat de Roy et Thunder \cite[th\'eor\`eme 2.1]{Roy_Thunder96}~: on a 
\begin{equation}\label{Equ:Siegelfun}\sum_{i=1}^{\rang(E)}\lambda_i(E)\geqslant\deg(E)-\rang(E)\ell(g(C)),\end{equation}
o\`u $g(C)$ le genre de $C$ et $\ell$ est une fonction affine qui ne d\'epend que du degr\'e effectif du corps de fonction $K$. On renvoie les lecteur dans \cite[page 5]{Roy_Thunder96} pour la forme explicite de cette fonction.
On en d\'eduit le r\'esultat suivant.
\begin{prop}
Soit $C$ une courbe projective r\'eguli\`ere d\'efinie sur un corps $k$. Si $E$ est un fibr\'e vectoriel sur $C$, on a
\begin{equation}\label{Equ:majdedeg+l}
\deg_+(E)=\sum_{i=1}^{\rang(E)}\max({\mu}_i(E),0)\leqslant\sum_{i=1}^{\rang(E)}\max(\lambda_i(E),0)+\rang(E)\ell(g(C)).
\end{equation}
\end{prop}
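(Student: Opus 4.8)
Le plan est de transposer dans le cadre des corps de fonctions l'argument qui a servi \`a \'etablir l'in\'egalit\'e tronqu\'ee \eqref{Equ:Siegeltronque} du cas hermitien, en substituant au lemme de Siegel absolu sa version dans le cadre des corps de fonctions \eqref{Equ:Siegelfun}. D'abord, l'\'egalit\'e $\deg_+(E)=\sum_{i=1}^{\rang(E)}\max(\mu_i(E),0)$, o\`u $\mu_i(E)$ d\'esigne la pente du polygone de Harder-Narasimhan $P_E$ sur l'intervalle $[i-1,i]$, se d\'emontre exactement comme la premi\`ere \'egalit\'e de \eqref{Equ:deg+}: la concavit\'e de $P_E$ entra\^ine que les pentes $\mu_i(E)$ d\'ecroissent, donc la valeur maximale de $P_E$ sur $[0,\rang(E)]$ est atteinte au sommet o\`u les pentes changent de signe, ce qui donne $\deg_+(E)=\sum_{\mu_i(E)\geqslant 0}\mu_i(E)=\sum_{i=1}^{\rang(E)}\max(\mu_i(E),0)$.

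Pour l'in\'egalit\'e, je traiterais d'abord le cas o\`u $\mu_{\min}(E)\geqslant 0$: toutes les pentes sont alors positives, si bien que $\deg_+(E)=\deg(E)$, et l'in\'egalit\'e \eqref{Equ:Siegelfun} fournit directement
\[\deg_+(E)=\deg(E)\leqslant\sum_{i=1}^{\rang(E)}\lambda_i(E)+\rang(E)\ell(g(C))\leqslant\sum_{i=1}^{\rang(E)}\max(\lambda_i(E),0)+\rang(E)\ell(g(C)).\]

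Dans le cas g\'en\'eral, je reprendrais la strat\'egie du cas hermitien. Soit $0=E_0\subsetneq E_1\subsetneq\ldots\subsetneq E_n=E$ le drapeau de Harder-Narasimhan de $E$, et soit $j$ le plus grand indice tel que la pente $\alpha_j$ du sous-quotient $E_j/E_{j-1}$ v\'erifie $\alpha_j\geqslant 0$. Le sous-fibr\'e $F:=E_j$ satisfait alors $\mu_{\min}(F)=\alpha_j\geqslant 0$ et $\deg(F)=\deg_+(E)$, la valeur maximale de $P_E$ \'etant atteinte au sommet $(\rang(E_j),\deg(E_j))$. Comme $F$ est un sous-fibr\'e de $E$ et que la fonction de hauteur $h_{F^\vee}$ co\"{\i}ncide avec la restriction de $h_{E^\vee}$ (les normes en chaque place \'etant induites par celles de $E$), on a $\mathcal F^t(F_K)\subseteq\mathcal F^t(E_K)$ pour tout $t$, d'o\`u $\lambda_i(F)\leqslant\lambda_i(E)$ pour tout $i\in\{1,\ldots,\rang(F)\}$. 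En appliquant \eqref{Equ:Siegelfun} \`a $F$ puis ces in\'egalit\'es, on obtient
\[\deg_+(E)=\deg(F)\leqslant\sum_{i=1}^{\rang(F)}\lambda_i(F)+\rang(F)\ell(g(C))\leqslant\sum_{i=1}^{\rang(E)}\max(\lambda_i(E),0)+\rang(E)\ell(g(C)),\]
ce qui ach\`everait la d\'emonstration.

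Le reste de l'argument n'est qu'un calque formel du cas hermitien; le seul point \`a v\'erifier avec soin est la monotonie $\lambda_i(F)\leqslant\lambda_i(E)$ des minima successifs par passage \`a un sous-fibr\'e, analogue de l'in\'egalit\'e $\Lambda_i(\overline F)\leqslant\Lambda_i(\overline E)$ du cas hermitien. Une l\'eg\`ere pr\'ecaution technique concerne le terme d'erreur: pour remplacer $\rang(F)\ell(g(C))$ par $\rang(E)\ell(g(C))$ dans la derni\`ere majoration, on utilise $\rang(F)\leqslant\rang(E)$ ainsi que la positivit\'e de $\ell(g(C))$, que l'on peut toujours supposer quitte \`a remplacer $\ell$ par $\max(\ell,0)$, l'in\'egalit\'e \eqref{Equ:Siegelfun} restant alors valable.
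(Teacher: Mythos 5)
Your proof is correct and takes essentially the same approach as the paper: reduce to the case $\mu_{\min}(E)\geqslant 0$ by passing to the last subbundle of the Harder--Narasimhan flag with nonnegative minimal slope, use the monotonicity $\lambda_i(F)\leqslant\lambda_i(E)$ of successive minima for subbundles, and apply the Roy--Thunder inequality \eqref{Equ:Siegelfun}. The paper compresses all of this into a single sentence, so your spelled-out version (including the precaution about the sign of $\ell(g(C))$, needed to replace $\rang(F)\ell(g(C))$ by $\rang(E)\ell(g(C))$) merely makes explicit what the paper leaves implicit, mirroring its own hermitian argument for \eqref{Equ:Siegeltronque}.
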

\begin{proof}
Quitte \`a remplacer $E$ par  le dernier sous-fibr\'e vectoriel de pente minimale positive dans le drapeau de Harder-Narasimhan de $E$, on peut supposer que $\mu_{\min}(E)\geqslant 0$. Dans ce cas-l\`a on a \[\deg_+(E)=\deg(E)=\sum_{i=1}^{\rang(E)}\lambda_i(E)+\rang(E)\ell(g(C))\leqslant \sum_{i=1}^{\rang(E)}\max(\lambda_i(E),0)+\rang(E)\ell(g(C)),\]
o\`u la deuxi\`eme \'egalit\'e provient de \eqref{Equ:Siegelfun}. Le r\'esultat est donc d\'emontr\'e. 
\end{proof} 

Soient $E$ et $F$ deux fibr\'es vectoriels sur $C$. Si $x$ et $y$ sont respectivement deux $k$-points de $\mathbb P(E^\vee)$ et $\mathbb P(F^\vee)$ \`a valeurs dans $K$, alors $x\otimes y$ (vu comme un sous-espace vectoriel de rang un de $E_K\otimes F_K$) est un $k$-point de $\mathbb P(E^\vee\otimes F^\vee)$ \`a valeurs dans $K$ qui v\'erifie la relation suivante
\[h_{E^\vee\otimes F^\vee}(x\otimes y)=h_{E^\vee}(x)+h_{F^\vee}(y).\]
On obtient donc le r\'esultat suivant~:
\begin{prop}
Soit $E_\sbullet=\bigoplus_{n\geqslant 0}E_n$ un $\mathcal O_C$-alg\`ebre gradu\'ee. On suppose que chaque $E_n$ est un fibr\'e vectoriel sur $C$. Pour tout $(a,b)\in\mathbb R^2$ et tout $(n,m)\in\mathbb N^2$, la relation suivante est v\'erifi\'ee~:
\begin{equation}\label{Equ:filtree}
\mathcal F^a(E_{n,K})\mathcal F^b(E_{m,K})\subset\mathcal F^{a+b}(E_{n+m,K}),
\end{equation}
o\`u $\mathcal F$ d\'esigne la $\mathbb R$-filtration par minima d\'efinie dans \eqref{Equ:filtrationparminima}.
\end{prop}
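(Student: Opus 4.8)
The plan is to follow closely the strategy used for the Harder--Narasimhan filtration in the proof of Corollaire~\ref{Cor:filtrationdeHNenalg}: I would reduce the stability of the minima filtration $\mathcal F$ under the multiplication of $E_\sbullet$ to two independent facts, namely (i) that $\mathcal F$ is compatible with tensor products, and (ii) that $\mathcal F$ is preserved by every morphism of vector bundles on $C$. Once these are granted the conclusion is immediate: the product $\mathcal F^a(E_{n,K})\cdot\mathcal F^b(E_{m,K})$ is by definition the image, under the generic fibre $\varphi_{n,m,K}$ of the structural homomorphism $\varphi_{n,m}\colon E_n\otimes E_m\to E_{n+m}$, of the subspace $\mathcal F^a(E_{n,K})\otimes\mathcal F^b(E_{m,K})$; by (i) the latter lies inside $\mathcal F^{a+b}((E_n\otimes E_m)_K)$, and by (ii) its image lands in $\mathcal F^{a+b}(E_{n+m,K})$.

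For step (i), I would invoke the additivity of heights established in the proposition immediately preceding: if $x\in\mathbb P(E^\vee)_k(K)$ and $y\in\mathbb P(F^\vee)_k(K)$ are lines with $h_{E^\vee}(x)\leqslant -a$ and $h_{F^\vee}(y)\leqslant -b$, then $x\otimes y\in\mathbb P((E\otimes F)^\vee)_k(K)$ satisfies $h_{(E\otimes F)^\vee}(x\otimes y)=h_{E^\vee}(x)+h_{F^\vee}(y)\leqslant -(a+b)$, so that $x\otimes y\subset\mathcal F^{a+b}((E\otimes F)_K)$. Since $\mathcal F^a(E_K)$ (resp.\ $\mathcal F^b(F_K)$) is spanned over $K$ by such lines $x$ (resp.\ $y$), and since the tensor product of two spanning families spans the tensor product, bilinearity yields $\mathcal F^a(E_K)\otimes\mathcal F^b(F_K)\subset\mathcal F^{a+b}((E\otimes F)_K)$. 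This step is routine once the height formula is in hand.

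The main obstacle is step (ii), the analogue for the minima filtration of the fact (\cite[proposition 2.2.4]{Chen10b}) used in Corollaire~\ref{Cor:filtrationdeHNenalg} for the Harder--Narasimhan filtration. Here I would argue geometrically. A line $x\subset E_K$ with $h_{E^\vee}(x)\leqslant -t$ is the generic fibre of a unique saturated line subbundle $L_x\subset E$, and unwinding the definition of the height through $\mathscr P_x^*\mathcal O_{E^\vee}(1)$ shows that $h_{E^\vee}(x)=-\deg(L_x)$; thus $h_{E^\vee}(x)\leqslant -t$ is equivalent to $\deg(L_x)\geqslant t$. Given a morphism $\psi\colon E\to F$ of vector bundles, either $\psi_K(x)=0$, in which case there is nothing to prove, or $y:=\psi_K(x)$ is a line in $F_K$ with saturation $L_y\subset F$; the restriction $\psi|_{L_x}$ factors through $L_y$ and provides a nonzero homomorphism of line bundles $L_x\to L_y$, whence $\deg(L_x)\leqslant\deg(L_y)$. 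Therefore $\deg(L_y)\geqslant t$, i.e.\ $y\subset\mathcal F^t(F_K)$, and since $\mathcal F^t(E_K)$ is spanned by such lines $x$ one concludes $\psi_K(\mathcal F^t(E_K))\subset\mathcal F^t(F_K)$.

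I would then apply (ii) to $\psi=\varphi_{n,m}$ with $t=a+b$ and combine it with (i), concluding exactly as sketched in the first paragraph. The only genuinely delicate point is the identification $h_{E^\vee}(x)=-\deg(L_x)$, together with the degree-monotonicity of a nonzero morphism of line bundles on the regular curve $C$; both are standard, and I expect no real difficulty beyond keeping track of the conventions for $\mathbb P(E^\vee)$ and $\mathcal O_{E^\vee}(1)$.
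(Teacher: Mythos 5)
Your proposal is correct and takes essentially the same approach as the paper, whose entire proof consists of the height-additivity formula $h_{E^\vee\otimes F^\vee}(x\otimes y)=h_{E^\vee}(x)+h_{F^\vee}(y)$ established in the paragraph immediately preceding the proposition, from which the result is declared to follow. Your step (ii) — functoriality of the minima filtration under morphisms of vector bundles, proved via the identification $h_{E^\vee}(x)=-\deg(L_x)$ for the saturated line subbundle $L_x$ together with degree monotonicity of nonzero maps of line bundles on the regular curve $C$ — is precisely the step the paper leaves implicit, and your treatment of it is correct.
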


\begin{rema}\label{Rem:comparaisonlambda1}
 On fixe un faisceau inversible ample $M$ sur $C$. Soit $a$ le degr\'e de $M$.
Si $E$ est un fibr\'e vectoriel non-nul sur $C$, les in\'egalit\'es \[\lambda_1(E)\leqslant\mu_{\max}(E)\leqslant\lambda_1(E)+g-1+a\]
sont toujours v\'erifi\'ees. La premi\`ere in\'egalit\'e est triviale. Pour la deuxi\`eme in\'egalit\'e, on peut utiliser l'invariance de la quantit\'e $\mu_{\max}(E)-\lambda_1(E)$ par le produit tensoriel d'un $\mathcal O_C$-module inversible. Quitte \`a remplacer $E$ par le produit tensoriel de $E$ avec une puissance tensorielle (\'eventuellement d'exposant n\'egatif) du faisceau inversible $M$, on peut supposer $g-1<\mu_{\max}(E)\leqslant g-1+a$. D'apr\`es le th\'eor\`eme de Riemann-Roch, on obtient que $\lambda_1(E)\geqslant 0$ (cf. \cite[lemme 2.1]{Chen_pm}). On obtient donc $\mu_{\max}(E)-\lambda_1(E)\leqslant g-1+a$. Cette in\'egalit\'e montre que, si $E_\sbullet$ est une $\mathcal O_C$-alg\`ebre gradu\'ee en fibr\'es vectoriels sur $C$ telle que $E_n$ soit non-nul pour $n$ assez grand, alors on a 
\[\lim_{n\rightarrow+\infty}\frac{\mu_{\max}(E_n)}{n}=\lim_{n\rightarrow+\infty}\frac{\lambda_1(E_n)}{n}\in\mathbb R\cup\{+\infty\}.\]
En outre, la relation \eqref{Equ:filtree} montre que la suite $\lambda_1(E_n)$ est sur-additive. On obtient donc
\begin{equation}
\forall\,p\geqslant 1,\quad
\lambda_1(E_p)\leqslant p\lim_{n\rightarrow+\infty}\frac{\mu_{\max}(E_n)}{n}.
\end{equation}
\end{rema}

La proposition pr\'ec\'edente nous permet de retrouver les r\'esultats pr\'esent\'es dans \S\,\ref{Sec:HS}, quitte \`a remplacer la filtration de Harder-Narasimhan par la filtration par minima. Soit $X$ un sch\'ema projectif et int\`egre sur $\Spec k$, muni d'un tour de fibrations sur courbes $\Theta$. On suppose que la dimension de Krull de $X$ est $d+1$. 
Soient $L$ un faisceau inversible gros sur $X$ et $V_\sbullet$ un syst\`eme lin\'eaire gradu\'e de $L$, qui contient un diviseur ample.
Si $d=0$, on d\'efinit  
\[\widetilde{\varepsilon}^{\Theta}(V_\sbullet):=\max(g(\Theta)-1,1),\]
o\`u $g(\Theta)$ est le genre du tour $\Theta$ d\'efini dans \S\,\ref{Sec:tourdefibration}. Si $d\geqslant 1$, on d\'efinit $\widetilde{\varepsilon}^{\Theta}(V_\sbullet)$ de fa\c{c}on r\'ecursive comme la suite. On suppose que $\Theta$ est de la forme $(p_0:X\rightarrow C_0,\Theta')$, o\`u $\Theta'$ est un tour de fibrations sur courbes de la fibre g\'en\'erique de $p_0$. Soient en outre $g_0$ le genre de la courbe $C_0$,  $\mu_0:=\mu^{p_0}_{\max}(V_\sbullet)$ (cf. la d\'efinition \ref{Def:imagedirecte}), et $W_\sbullet$ la fibre g\'en\'erique de $p_{0*}(V_\sbullet)$. On d\'efinit
\[\widetilde{\varepsilon}^{\Theta}(V_\sbullet)=\mu_0\widetilde{\varepsilon}^{\Theta'}(W_\sbullet)+\Big(\frac{\vol(W_\sbullet)}{d!}+\widetilde\varepsilon^{\Theta'}(W_\sbullet)\Big)(\max(g_0-1,1)+\ell(g_0)).\]
On voit aussit\^ot que $\widetilde{\varepsilon}^{\Theta}(V_\sbullet)\leqslant\widetilde{\varepsilon}^{\Theta}(V_\sbullet')$ si $V_\sbullet$ est contenu dans un autre syst\`eme lin\'eaire gradu\'e $V_\sbullet'$ d'un autre faisceau inversible gros $L'$. En outre, par r\'ecurrence sur $d$ on peut v\'erifier que $\widetilde{\varepsilon}^{\Theta}(V_\sbullet^{(p)})\leqslant p^d\widetilde{\varepsilon}^{\Theta}(V_\sbullet)$ pour tout entier $p\geqslant 1$.

\begin{theo}\label{Thm:majorationdeHSbis}
Soit $X$ un sch\'ema projectif et int\`egre sur $\Spec k$ muni d'un tour de fibrations sur courbes $\Theta=(p_i:X_i\rightarrow C_i)_{i=0}^d$, et $L$ un faisceau inversible sur $X$. 
Si $V_\sbullet=\bigoplus_{n\geqslant 0}V_n$ est un syst\`eme lin\'eaire gradu\'e de $L$, qui contient un diviseur ample, alors on a 
\begin{equation}\label{Equ:majorationbis}\rang_k(V_1)\leqslant\frac{\mathrm{vol}(V_\sbullet)}{(d+1)!}+\widetilde{\varepsilon}^{\Theta}(V_\sbullet). \end{equation}
\end{theo}
\begin{proof}
La d\'emonstration est presque identique \`a celle du th\'eor\`eme \ref{Thm:majorationdeHS}. Il suffit de remplacer les filtrations de Harder-Narasimhan par les filtrations par minima. Le cas o\`u $d=0$ utilise notamment les estim\'es d\'emontr\'ees dans le th\'eor\`eme \ref{Thm:h0etdegplus} qui sont valables pour tout corps $k$, et la d\'emonstration reste donc inchang\'ee.

Dans la suite, on suppose $d\geqslant 1$. On suppose en outre que $\Theta$ est de la forme $(p_0:X\rightarrow C_0,\Theta')$, o\`u $\Theta'$ est un tour de fibrations sur courbes de la fibre g\'en\'erique de $p_0$. Soient $g_0$ le genre de la courbe $C_0$,  $\mu_0:=\mu^{p_0}_{\max}(V_\sbullet)$, et $W_\sbullet$ la fibre g\'en\'erique de $E_\sbullet=p_{0*}(V_\sbullet)$. On munit $W_\sbullet$ de la filtration par minima $\mathcal F$ et on note $W_\sbullet^t=\bigoplus_{n\geqslant 0}\mathcal F^{nt}W_n$. On a encore
\[\vol(V_\sbullet)=\vol(E_\sbullet)=(d+1)\int_0^{\mu_0}\rang(W_\sbullet^t)\,\mathrm{d}t,\]
o\`u la premi\`ere \'egalit\'e provient du th\'eor\`eme \ref{Thm:passageauxfibres} et on a utilis\'e la relation $\lambda_1(E_1)\leqslant\mu_0$ (cf. la remarque \ref{Rem:comparaisonlambda1}) dans la deuxi\`eme \'egalit\'e. 
En outre, les relations \eqref{Equ:majdedeg+l} et \eqref{Equ:lambdaint} montre que
\[\deg_+(E_1)\leqslant\int_0^{\mu_0}\rang(W_1^t)\,\mathrm{d}t+\rang(W_1)\ell(g_0).\]
On applique l'hypoth\`ese de r\'ecurrence \`a $W_\sbullet^t$ et obtient 
\[\begin{split}\deg_+(E_1)&\leqslant\int_0^{\mu_0}\Big(
\frac{\vol(W_1^t)}{d!}+\widetilde{\varepsilon}^{\Theta'}(W_\sbullet^t)\Big)\,\mathrm{d}t+\rang(W_1)\ell(g_0)\\
&\leqslant\frac{\vol(V_\sbullet)}{(d+1)!}+\mu_0\widetilde{\varepsilon}^{\Theta'}(W_\sbullet^t)+\rang(W_1)\ell(g_0).
\end{split}\]
On en d\'eduit 
\[\begin{split}\rang(V_1)&\leqslant h^0(E_1)\leqslant\deg_+(E_1)+\rang(W_1)\max(g_0-1,1)\\
&\leqslant\frac{\vol(V_\sbullet)}{(d+1)!}+\mu_0\widetilde{\varepsilon}^{\Theta'}(W_\sbullet^t)+\rang(W_1)(\ell(g_0)+\max(g_0-1,1)).
\end{split}\]
On applique alors l'hypoth\`ese de r\'ecurrence \`a $W_\sbullet$ et obtient le r\'esultat souhait\'e.
\end{proof}

\backmatter
\bibliography{chen}
\bibliographystyle{smfplain}

\end{document}